\newtheorem{theorem}{Theorem}
\newtheorem{lemma}[theorem]{Lemma}
\newtheorem{proposition}[theorem]{Proposition}
\newtheorem{definition}[theorem]{Definition}
\newtheorem{corollary}[theorem]{Corollary}
\newtheorem{example}[theorem]{Example}
\numberwithin{theorem}{section}
\newtheorem{conjecture}[theorem]{Conjecture}
\newtheoremstyle{indented}
  {10pt}
  {10pt}
  {\addtolength{\@totalleftmargin}{1.5em}
   \addtolength{\linewidth}{-1.5em}
   \parshape 1 1.5em \linewidth}
  {}
  {\bfseries}
  {.}
  {.5em}
  {}
\newtheorem{remark}[theorem]{Remark}
\def\R{{\mathbb R}} 
\def\N{{\mathbb N}} 
\def\C{{\mathbb C}}
\def\gog{{\mathfrak g}} \def\tot{{\mathfrak t}}
\def\mult{{\mathrm{mult}}}
\def\CJ{{\mathcal J}}
\def\CD{{\mathcal D}}
\newcommand\xqed[1]{%
  \leavevmode\unskip\penalty9999 \hbox{}\nobreak\hfill
  \quad\hbox{#1}}
\newcommand\remdone{\xqed{$\triangle$}}
\begin{document}

\author{Colin McSwiggen}
\date{}

\title{Box splines, tensor product multiplicities \\ and the volume function}

\maketitle

\begin{abstract}
We study the relationship between the tensor product multiplicities of a compact semisimple Lie algebra $\gog$ and a special function $\CJ$ associated \mbox{to $\gog$}, called the volume function.  The volume function arises in connection with the randomized Horn's problem in random matrix theory and has a related significance in symplectic geometry.  Building on box spline deconvolution formulae of Dahmen--Micchelli and De Concini--Procesi--Vergne, we develop new techniques for computing the multiplicities from $\CJ$, answering a question posed by Coquereaux and Zuber.  In particular, we derive an explicit algebraic formula for a large class of Littlewood--Richardson coefficients in terms of $\CJ$.   We also give analogous results for weight multiplicities, and we show a number of further identities relating the tensor product multiplicities, the volume function and the box spline.  To illustrate these ideas, we give new proofs of some known theorems.
\end{abstract}

\tableofcontents

\section{Introduction}

An important combinatorial problem in representation theory is the determination of tensor product multiplicities.  Given two irreducible representations $V_\lambda, V_\mu$ of a compact semisimple Lie algebra $\gog$, or equivalently of the connected, simply connected group $G$ with Lie algebra $\gog$, we would like to compute the decomposition
$$V_\lambda \otimes V_\mu = \bigoplus_\nu C_{\lambda \mu}^\nu V_\nu,$$
where $\lambda, \mu, \nu$ are the corresponding highest weights.  Since characters combine multiplicatively under the tensor product and additively under the direct sum, this is equivalent to computing the structure constants of the algebra generated by the irreducible characters of $G$:
$$\chi_\lambda \chi_\mu = \sum_\nu C_{\lambda \mu}^\nu \chi_\nu.$$
Much is known about this problem from a combinatorial perspective, including algorithms for computing the multiplicities, although in general the problem is \texttt{\#P}-complete \cite{HN}.  The most widely studied case is $\gog = \mathfrak{su}(n)$, where the multiplicities are usually called Littlewood--Richardson coefficients as they are described by the famous Littlewood--Richardson rule \cite{LR}.

In this paper we study the relationship between the multiplicities $C_{\lambda \mu}^\nu$ and a special function $\CJ$ associated to $\gog$, called the volume function (see Definition \ref{def:J-def} below), which takes three arguments in a Cartan subalgebra $\tot \subset \gog$:
$$\CJ(\alpha, \beta ; \gamma), \qquad \alpha, \beta, \gamma \in \tot.$$
We develop multiple methods for computing $C_{\lambda \mu}^\nu$ from $\CJ$, and we demonstrate how each of these two objects can be used to study the other, leading to new results as well as new proofs of known theorems.  

The volume function is of independent significance in symplectic geometry, as well as in random matrix theory, where it is closely related to the joint spectral density for the randomized Horn's problem; see (\ref{eqn:p-from-J}) below.  All of the main results involving $\CJ$ in this paper can be reformulated in terms of this probability density, so that in a loose sense this paper can be interpreted as establishing an equivalence between two problems: the problem of computing tensor product multiplicities for a compact semisimple Lie algebra $\gog$, and the randomized Horn's problem for coadjoint orbits in $\gog^*$.  It is well known that these problems are related asymptotically via a semiclassical limit (discussed below in section \ref{sec:semiclassical}), but here we derive exact non-asymptotic formulae for $C_{\lambda \mu}^\nu$ in terms of finitely many values of $\CJ$.

The original motivation for this study was a question posed by Coquereaux and Zuber in \cite[sect. 2.3]{CZ1}, where they showed that certain values of the volume function for $\mathfrak{su}(n)$ can be expressed algebraically in terms of Littlewood--Richardson coefficients.  The result is a pair of identities that they called the $\CJ$-LR relations (see (\ref{eqn:JLR1}) below), which were later extended to arbitrary $\gog$ in \cite{CMZ}.  Coquereaux and Zuber asked whether these relations might be inverted, yielding an expression for $C_{\lambda \mu}^\nu$ in terms of $\CJ$.  We will answer this question in the affirmative, although the formulae that we obtain will be more or less explicit depending on $\gog$ and on the highest weights $(\lambda, \mu, \nu)$.

A concrete illustration of the relationship between $C_{\lambda \mu}^\nu$ and $\CJ$ comes from a construction of Berenstein and Zelevinsky \cite{BZ}, who defined a polytope $H_{\lambda \mu}^\nu$ such that $C_{\lambda \mu}^\nu$ is equal to the number of points in $H_{\lambda \mu}^\nu$ with integer coordinates.  It was shown in \cite{CMZ} that $\CJ(\lambda, \mu ; \nu)$ equals the volume of $H_{\lambda \mu}^\nu$.  Recovering $C_{\lambda \mu}^\nu$ from $\CJ$ therefore amounts to computing the number of integer points in $H_{\lambda \mu}^\nu$ given the volumes of the whole family of polytopes $\{ H_{\alpha \beta}^\gamma \}_{\alpha, \beta, \gamma \in \tot}$.  In the terminology of geometric quantization, $\CJ(\lambda, \mu; \nu)$ is a semiclassical approximation of $C_{\lambda \mu}^\nu$, so that an expression for $C_{\lambda \mu}^\nu$ in terms of $\CJ$ can be interpreted as exactly recovering a ``quantum'' object from its classical limit.  General methods for counting lattice points in polytopes based on this type of volume data have been developed by Brion and Vergne in \cite{BV} and by Szenes and Vergne in \cite{SV}, but here we take a different approach.

Our methods are based on the idea of box spline deconvolution.  It follows from a formula of De Concini, Procesi and Vergne \cite{DPV} that for certain fixed values of $\alpha$ and $\beta$, $\CJ(\alpha, \beta ; \gamma)$ can be represented as a convolution of two measures on $\tot$: a finitely supported measure that encodes the multiplicities $C_{\lambda \mu}^\nu$, and a continuous measure called a box spline (see Definition \ref{def:BS}).  Therefore one way to think about computing $C_{\lambda \mu}^\nu$ from $\CJ$ is as a deconvolution problem.  To this end, it is necessary to study the box spline itself in some detail.  The problem of inverting the convolution with the box spline was studied in depth by Dahmen and Micchelli in the 1980's \cite{DM2, DM} and more recently by several geometers and representation theorists \cite{DPV, DV, V-BS}, who introduced the idea of using box spline deconvolution to study representation-theoretic multiplicity problems.  Here we build on a number of these authors' results, which we review below.

The main new technique in this paper, introduced in section \ref{sec:discrete}, is to simplify the problem by restricting $\CJ$ to a lattice.  By moving to this discrete setting we lose no relevant information, but the deconvolution problem becomes more tractable, leading to new formulae for the multiplicities.  We also obtain a reformulation of the $\CJ$-LR relations in terms of a finite-difference operator $\CD$ that we call the box spline Laplacian.  This finally allows us, in Theorem \ref{thm:fin-dif-inversion}, to derive the following concise expression for Littlewood--Richardson coefficients of $\mathfrak{su}(n)$, which holds when the triple $(\lambda, \mu, \nu)$ lies sufficiently far from a certain hyperplane arrangement in $\tot^3$:
\begin{equation} \label{eqn:fd-inv-intro}
C_{\lambda \mu}^\nu = \sum_{k = 0}^{\lfloor d /2 \rfloor} \Big( -\frac{1}{2} \CD \Big)^k \CJ(\lambda', \mu'; \nu'),
\end{equation}
where $d = \frac{1}{2}(n-1)(n-2)$ and the primes indicate the shift by the Weyl vector.  The precise sense of (\ref{eqn:fd-inv-intro}), including notational details and the required assumptions on $(\lambda, \mu, \nu)$, is explained below in sections \ref{sec:finite-difference} and \ref{sec:fd-inversion}.

The box spline convolution and deconvolution identities of \cite{DPV} are general statements in index theory, and they presumably could be used to extend the techniques of this paper to the more general problem of decomposing a representation of $G$ into irreducible representations of an arbitrary closed connected subgroup.  We do not pursue this line of reasoning in full, but we illustrate it at the end of the paper by showing how the same techniques can be used to compute weight multiplicities of an irreducible representation of $G$, leading to an analogue of (\ref{eqn:fd-inv-intro}) for Kostka numbers.

\subsection*{Organization of the paper}

In section \ref{sec:J-BS} we define the volume function and the box spline associated to $\gog$, and we review some of their properties.  We then recall a formula of \cite{DPV} that represents the volume function as a convolution with the box spline.  In section \ref{sec:inversion} we review some generalities on box spline deconvolution due to \cite{DM, DPV, DV}.  We discuss in particular the case $\gog = \mathfrak{su}(n)$, which is special among the cases that we consider because it admits a particularly straightforward expression for the deconvolution operator, leading to an integrodifferential formula for the Littlewood--Richardson coefficients.

Most of the statements in sections \ref{sec:J-BS} and \ref{sec:inversion} are fairly direct consequences of the deconvolution theorems of \cite{DM, DPV, DV} and of formulae for the volume function derived in \cite{CMZ, CZ1}.  Accordingly, these two sections should be regarded primarily as a review of known results, with the goal of working out in detail an important special case of the more general considerations in \cite{DPV, DV}.  We have, however, recorded several explicit formulae and derivations that do not seem to have appeared previously in the literature.

In section \ref{sec:consequences}, we use these representations of $\CJ$ and $C_{\lambda \mu}^\nu$ to give new proofs of three known results: the differentiability class of $\CJ$ (Corollary \ref{cor:J-regularity}), the semiclassical asymptotics expressing $\CJ$ as a scaling limit of tensor product multiplicities (Corollary \ref{cor:J-semiclassical}), and a theorem of Rassart \cite{Rass} and Derksen--Weyman \cite{DW} stating that for fixed $(\lambda, \mu, \nu)$, the Littlewood--Richardson coefficient $C_{N \lambda\, N\mu}^{N\nu}$ of $\mathfrak{su}(n)$ is a polynomial in the variable $N \in \N$ (Theorem \ref{thm:polynomiality}).  We include these proofs mainly for illustrative purposes: the first two demonstrate the wide-ranging consequences of the De Concini--Procesi--Vergne convolution formula, while the third provides intuition for geometric arguments that we will use later in the proof of Theorem \ref{thm:fin-dif-inversion}.

Section \ref{sec:discrete} contains our main new results.  Here we replace the volume function and the box spline with discrete approximations and study convolutions on the root lattice or the weight lattice rather than on the entire Cartan subalgebra.  This simplifies the deconvolution problem; in fact, for any given $(\lambda, \mu, \nu)$, only finitely many values of $\CJ$ are needed to compute the tensor product multiplicity.  We give two methods for calculating $C_{\lambda \mu}^\nu$ in this discrete setting: an algorithm (Theorem \ref{thm:C-from-J-algo}) and an integral formula (Theorem \ref{thm:C-from-J-Fourier}).  Since $\CJ$ can be expressed in terms of Harish-Chandra orbital integrals (defined in (\ref{eqn:H-def}) below), for $\gog = \mathfrak{su}(n)$ these results give the Littlewood--Richardson coefficients in terms of the HCIZ integral (defined in (\ref{eqn:hciz}) below), a widely studied function in random matrix theory.

Next we derive some identities for the discretized box spline on the root lattice, and we introduce a finite difference operator called the box spline Laplacian that provides a convenient representation of discrete convolution with the box spline (Proposition \ref{prop:b-conv-D-rep}).  As a consequence, we obtain an alternate formulation of the $\CJ$-LR relations.  Finally, we draw on results from all of the previous sections to prove a formula for Littlewood--Richardson coefficients as a linear combination of values of $\CJ$ (Theorem \ref{thm:fin-dif-inversion}), which holds for ``typical'' dominant weights $(\lambda, \mu, \nu)$ of $\mathfrak{su}(n)$.

Section 6 sketches how the techniques in the preceding sections are equally applicable to the simpler problem of computing weight multiplicities.

\section{The volume function and the box spline}
\label{sec:J-BS}

Let $G$ be a compact, semisimple, connected, simply connected Lie group of rank $r$ with Lie algebra $\mathfrak{g}$, and $\tot \subset \gog$ a Cartan subalgebra with Weyl group $W$.  Fix a $G$-invariant inner product $\langle \cdot, \cdot \rangle$ identifying $\mathfrak{g} \cong \mathfrak{g}^*$, as well as a choice $\Phi^+ \subset \tot$ of positive roots.

We will study a function $\CJ$ associated to $\gog$, called the volume function.  It is defined in terms of Harish-Chandra orbital integrals,
\begin{equation} \label{eqn:H-def}
\mathcal{H}(x, y) := \int_G e^{\langle \mathrm{Ad}_g y, x \rangle} dg, \qquad x, y \in \tot \otimes \C,
\end{equation}
where $dg$ is the normalized Haar measure.
These integrals admit an exact expression due to Harish-Chandra \cite{HC}:
\begin{equation} \label{eqn:hc-this-context} \Delta_\mathfrak{g}(x) \Delta_\mathfrak{g}(y) \mathcal{H}(x, y) = \Delta_\mathfrak{g}(\rho) \sum_{w \in W} \epsilon(w) e^{\langle w(y), x \rangle}, \qquad x, y \in \mathfrak{t}, \end{equation} where $\Delta_\gog(x) := \prod_{\alpha \in \Phi^+}\langle \alpha, x \rangle$ is the discriminant of $\gog$, $\epsilon(w)$ is the sign of $w \in W$, and $\rho := \frac{1}{2}\sum_{\alpha \in \Phi^+} \alpha$ is the Weyl vector.

\begin{example} \label{ex:hciz} \normalfont
An important special case\footnote{Although $U(N)$ is not semisimple, (\ref{eqn:hciz}) follows readily from (\ref{eqn:hc-this-context}) with $G=SU(N)$.} of (\ref{eqn:hc-this-context}) is the Harish-Chandra--Itzykson--Zuber (HCIZ) integral over the unitary group $U(N)$,
\begin{equation} \label{eqn:hciz} \int_{U(N)} e^{\mathrm{tr} (AUBU^\dagger)} dU = \left( \prod_{p=1}^{N-1}p! \right) \frac{\det(e^{a_i b_j})_{i,j = 1}^N}{\Delta(A) \Delta(B)}, \end{equation}
where $A$ and $B$ are $N$-by-$N$ Hermitian matrices with eigenvalues $a_1 > \hdots > a_N$ and $b_1 > \hdots > b_N$ respectively, and $\Delta(A) = \prod_{i < j} (a_i - a_j)$ is the Vandermonde determinant \cite{IZ}. \remdone
\end{example}

\begin{definition} \label{def:J-def} \normalfont
For $\alpha, \beta, \gamma \in \tot$ we define the {\it volume function} as
\begin{equation} 
\label{eqn:J-def}
\CJ(\alpha, \beta ; \gamma) := \frac{\Delta_\mathfrak{g}(\alpha) \Delta_\mathfrak{g}(\beta) \Delta_\mathfrak{g}(\gamma)}{(2\pi)^r \, |W| \, \Delta_\mathfrak{g}(\rho)^3} \int_\mathfrak{t} \Delta_\mathfrak{g}(x)^2 \mathcal{H}(ix, \alpha) \mathcal{H}(ix, \beta) \mathcal{H}(ix, -\gamma) \, dx.
\end{equation}
It is a homogeneous piecewise polynomial function of the triple $(\alpha, \beta, \gamma) \in \tot^3$, of degree $|\Phi^+| - r$.  We usually take $\alpha, \beta$ fixed and regard $\CJ(\alpha, \beta ; \gamma)$ as a $W$-skew-invariant function of $\gamma \in \tot$, in which case it is supported on a union of $|W|$ convex polytopes in $\tot$. \remdone
\end{definition}

The volume function arises naturally when studying a probabilistic generalization of Horn's problem, which asks about the possible eigenvalues of a sum of two Hermitian matrices whose eigenvalues are known.  Concretely, let $\mathcal{C}_+ \subset \tot$ be the (closed) dominant Weyl chamber, and let $\alpha, \beta \in \mathcal{C}_+$ with $\Delta_\gog(\alpha), \Delta_\gog(\beta) \not = 0$. Suppose $A \in \mathcal{O}_\alpha$, $B \in \mathcal{O}_\beta$ are drawn uniformly at random from the respective coadjoint orbits of $\alpha$ and $\beta$.  Then
\begin{equation} \label{eqn:p-from-J}
p(\gamma | \alpha, \beta) := \frac{\Delta_\gog(\gamma) \Delta_\gog(\rho)}{\Delta_\gog(\alpha) \Delta_\gog(\beta)} \CJ(\alpha, \beta; \gamma), \qquad \gamma \in \mathcal{C}_+
\end{equation}
is the probability density of the random $\gamma \in \mathcal{C}_+$ such that $A + B \in \mathcal{O}_\gamma$, with respect to Lebesgue measure on $\mathcal{C}_+$.

In addition to this probabilistic interpretation, $\CJ(\alpha, \beta ; \gamma)$ can also be interpreted geometrically both as the volume of a symplectic reduction of the product of coadjoint orbits $\mathcal{O}_\alpha \times \mathcal{O}_\beta \times \mathcal{O}_{-\gamma}$ and as the volume of a convex polytope constructed by Berenstein and \mbox{Zelevinsky \cite{BZ}}, the integer points of which count tensor product multiplicities.  We refer the reader to \cite{CMZ, CMZ2, CZ1} for further background on the volume function and for details of its probabilistic and geometric interpretations.

As a first illustration of the relationship between $\CJ$ and the tensor product multiplicities $C_{\lambda \mu}^\nu := \dim \mathrm{Hom}_\gog(V_\lambda \otimes V_\mu \to V_\nu)$, we recall one of the $\CJ$-LR relations shown in \cite[prop. 2]{CMZ}.  These relations are a pair of identities that express certain values of $\CJ$ in terms of $C_{\lambda \mu}^\nu$.

Let $Q \subset \tot$ be the root lattice.  We will say that a triple $(\lambda, \mu, \nu)$ of dominant weights of $\gog$ is {\it compatible} if $\lambda + \mu - \nu \in Q$, as this is a well-known necessary condition for $C_{\lambda \mu}^\nu \ne 0$.  Let a prime denote the shift of a weight by the Weyl vector: $\lambda' = \lambda + \rho.$  Let $K \subset Q$ be the set of dominant elements of the root lattice that lie on the interior of the convex hull of the Weyl orbit of $\rho$, and for $\kappa \in K$ define $r_\kappa := \CJ(\rho, \rho; \kappa')$.  Then for $(\lambda, \mu, \nu)$ compatible, we have \mbox{\cite[eqn. 28]{CMZ}}:
\begin{equation} \label{eqn:JLR1}
\CJ(\lambda', \mu'; \nu') = \sum_{\kappa \in K} \sum_{\substack{\tau \in \lambda + \mu + Q \\ \ \ \cap \ \mathcal{C}_+}} r_\kappa C_{\lambda \mu}^\tau C_{\tau \kappa}^\nu = \sum_{\kappa \in K} r_\kappa C_{\lambda \mu \kappa}^\nu,
\end{equation}
where $C_{\lambda \mu \kappa}^\nu := \dim \mathrm{Hom}_\gog(V_\lambda \otimes V_\mu \otimes V_\kappa \to V_\nu)$ is the multiplicity of $V_\nu$ in the triple tensor product.  The index $\tau$ runs over all dominant elements of the translated root lattice $\lambda + \mu + Q$.  The second $\CJ$-LR relation \cite[eqn. 29]{CMZ} gives a similar formula for $\CJ(\lambda, \mu ; \nu)$ with unshifted arguments.

\begin{remark} \label{rem:conj-property} \normalfont
The $\CJ$-LR relations offer a starting point for deducing properties of $\CJ$ from those of $C_{\lambda \mu}^\nu$ and vice versa.  For example, in \cite{CZ0} Coquereaux and Zuber showed that the total multiplicity of a tensor product of two irreducible representations is unchanged by conjugating one of the highest weights:
\begin{equation} \label{eqn:conj-prop-C}
\sum_{\nu \in \lambda + \bar \mu + Q} C_{\lambda \bar \mu}^\nu = \sum_{\nu \in \lambda + \mu + Q} C_{\lambda \mu}^\nu,
\end{equation}
where $\bar \mu := -w_0(\mu)$ for $w_0$ the longest element of $W$.  Summing over $\nu$ in the $\CJ$-LR relations (\ref{eqn:JLR1}) and \cite[eqn. 29]{CMZ} and using (\ref{eqn:conj-prop-C}), we obtain analogous properties for $\CJ$:

\begin{eqnarray}
\label{eqn:conj-prop-J1} \sum_{\substack{\nu \in \lambda + \bar \mu + Q \\ \ \ \cap \ \mathcal{C}_+}}\CJ(\lambda', \bar \mu' ; \nu') & = & \sum_{\substack{\nu \in \lambda + \mu + Q \\ \ \ \cap \ \mathcal{C}_+}} \CJ(\lambda', \mu' ; \nu'), \\
\label{eqn:conj-prop-J2} \sum_{\substack{\nu \in \lambda + \bar \mu + Q \\ \ \ \cap \ \mathcal{C}_+}}\CJ(\lambda, \bar \mu ; \nu) & = & \sum_{\substack{\nu \in \lambda + \mu + Q \\ \ \ \cap \ \mathcal{C}_+}} \CJ(\lambda, \mu ; \nu).
\end{eqnarray}
\remdone
\end{remark}

The present paper grew out of a desire to answer a question posed by Coquereaux and Zuber in \cite{CZ1}: is there an ``inverse'' formula to (\ref{eqn:JLR1}) that expresses $C_{\lambda \mu}^\nu$ in terms of $\CJ$?  We will find that in fact we can compute $C_{\lambda \mu}^\nu$ from $\CJ$ in multiple ways.  All of them involve a type of piecewise polynomial measure called a box spline.

\begin{definition} \label{def:BS} \normalfont
For $X \subset \tot$ a finite collection of vectors, we define a measure $B_c[X]$ on $\tot$ by
\begin{equation}
\label{eqn:BS-def}
\int_{\mathfrak{t}} f \ dB_c[X] = \int_{-1/2}^{1/2} \cdots \int_{-1/2}^{1/2} f \big( \sum_{\alpha \in X} t_\alpha \alpha \big) \prod_{\alpha \in X} dt_\alpha, \qquad f \in C^0(\tot).
\end{equation}
This measure $B_c[X]$ is the {\it centered box spline} associated to the set $X$. \remdone
\end{definition}

We will mainly study $B_c[\Phi^+]$, which is a probability measure supported on the convex hull of the Weyl orbit of $\rho$.  It has a density $b$ with respect to the Lebesgue measure $dx$ on $\tot$ induced by the inner product $\langle \cdot, \cdot \rangle$, so that
$$\int_{\mathfrak{t}} f \ dB_c[\Phi^+] = \int_\tot f(x)b(x) \, dx, \qquad f \in C^0(\tot).$$
The density $b$ is a piecewise polynomial function of degree $|\Phi^+| - r$.  From the definition (\ref{eqn:BS-def}) we find the following symmetries:
\begin{eqnarray}
\label{eqn:b-sign}
b(-x) &=& b(x), \qquad x \in \tot, \\
\label{eqn:b-W-invariant}
b(w(x)) &=& b(x), \qquad x \in \tot, \ w \in W.
\end{eqnarray}
For a general introduction to box splines, see \cite{dBH, PBP}.

We take the probabilist's sign convention for the Fourier transform of a Borel measure $\mu$ on $\mathfrak{t}$, $$\mathscr{F}[\mu](x) := \int_\mathfrak{t} e^{i \langle \xi, x \rangle} d\mu(\xi), \qquad x \in \tot.$$ A direct calculation then reveals that
\begin{equation} \label{eqn:BS-FT}
\mathscr{F}\big [B_c[\Phi^+] \big ](x) = \prod_{\alpha \in \Phi^+} \frac{e^{i \langle \alpha,x \rangle/2} - e^{-i \langle \alpha, x \rangle/2}}{i \langle \alpha, x \rangle} = j_\mathfrak{g}^{1/2}(x),
\end{equation}
where $j_\gog$ is the function sometimes called the Jacobian of the exponential map, well known to representation theorists due to its appearance in the Kirillov character formula (see (\ref{eqn:KCF}) below).

The box spline $B_c[\Phi^+]$ controls the relationship between the volume function and the tensor product multiplicities.  This is a consequence of a very general formula in index theory due to De Concini, Procesi and Vergne \cite[prop. 5.14]{DPV} (see also \cite[sect. 3]{DV}), which relates the box spline to a quantity called the infinitesimal index of a transversally elliptic symbol.  As a special case, their formula implies the following expression for $\CJ$ as the convolution of $B_c[\Phi^+]$ and a finitely supported $W$-skew-invariant measure that encodes the coefficients $C_{\lambda \mu}^\nu$.

Let $\delta_x$ denote the measure assigning unit mass to the point $x \in \tot$.  Where it improves ease of reading, we will sometimes write $f(x) * \mu$ rather than $(f * \mu)(x)$ for the convolution of a function $f$ and a measure $\mu$, that is, $$f(x) * \mu = \int_\tot f(x - y) \, d\mu(y).$$

\begin{proposition}[De Concini--Procesi--Vergne]
\label{prop:J-BS}
Let $\lambda, \mu \in \tot$ be dominant weights of $\gog$.  Then
\begin{equation} \label{eqn:conv-conclusion}
\mathcal{J}(\lambda', \mu'; \gamma) = b(\gamma) \, * \, \Bigg ( \sum_{\substack{\nu \in (\lambda + \mu) + Q \\ \ \ \cap \ \mathcal{C}_+}} C^\nu_{\lambda \mu} \sum_{w \in W} \epsilon(w) \delta_{w(\nu')} \Bigg ).
\end{equation}
\end{proposition}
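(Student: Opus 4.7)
The plan is to verify (\ref{eqn:conv-conclusion}) by computing the Fourier transforms of both sides with respect to $\gamma$ and showing that they agree, then invoking Fourier inversion. Writing $M := \sum_\nu C^\nu_{\lambda\mu}\sum_w \epsilon(w)\delta_{w(\nu')}$ for the finitely supported measure on the right, I first compute $\mathscr{F}[b*M] = \mathscr{F}[b]\cdot\mathscr{F}[M]$. By (\ref{eqn:BS-FT}), the first factor is $j_\gog^{1/2}(x)$, while the second is $\sum_\nu C^\nu_{\lambda\mu}\sum_w \epsilon(w)e^{i\langle w\nu',x\rangle}$. Applying (\ref{eqn:hc-this-context}) with $(x,y)\mapsto(ix,\nu')$ rewrites the inner alternating sum as $\Delta_\gog(ix)\Delta_\gog(\nu')\mathcal{H}(ix,\nu')/\Delta_\gog(\rho)$. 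Combining this with the Weyl denominator identity $\prod_{\alpha\in\Phi^+}(e^{i\langle\alpha,x\rangle/2} - e^{-i\langle\alpha,x\rangle/2}) = \Delta_\gog(ix)j_\gog^{1/2}(x)$ produces the Kirillov character formula $j_\gog^{1/2}(x)\chi_\nu(\exp ix) = \Delta_\gog(\nu')\mathcal{H}(ix,\nu')/\Delta_\gog(\rho)$. Using character multiplicativity $\sum_\nu C^\nu_{\lambda\mu}\chi_\nu = \chi_\lambda\chi_\mu$ and then applying Kirillov to each factor, I obtain
\begin{equation*}
\mathscr{F}[b*M](x) \;=\; \frac{\Delta_\gog(ix)\,\Delta_\gog(\lambda')\,\Delta_\gog(\mu')\,\mathcal{H}(ix,\lambda')\,\mathcal{H}(ix,\mu')}{\Delta_\gog(\rho)^2}.
\end{equation*}

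Second, I would rewrite $\CJ(\lambda',\mu';\gamma)$ as the inverse Fourier transform of this same function. Starting from Definition \ref{def:J-def}, apply (\ref{eqn:hc-this-context}) to $\mathcal{H}(ix,-\gamma)$ to replace it with $\Delta_\gog(\rho)\sum_w \epsilon(w)e^{-i\langle w\gamma,x\rangle}/[\Delta_\gog(ix)\Delta_\gog(-\gamma)]$. The prefactor $\Delta_\gog(\gamma)$ cancels $\Delta_\gog(-\gamma)$ up to the sign $(-1)^{|\Phi^+|}$, and one power of $\Delta_\gog(x)^2$ cancels against $\Delta_\gog(ix) = i^{|\Phi^+|}\Delta_\gog(x)$. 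The remaining integrand $\Delta_\gog(ix)\mathcal{H}(ix,\lambda')\mathcal{H}(ix,\mu')$ is $W$-skew-invariant in $x$, so under the change of variables $x\mapsto w(x)$ the $w$-th summand picks up a sign $\epsilon(w)$ while $e^{-i\langle w\gamma, w(x)\rangle}$ becomes $e^{-i\langle\gamma,x\rangle}$. All $|W|$ terms therefore collapse onto a common integral with total weight $|W|$, cancelling the $1/|W|$ in the definition. Tracking the powers of $i$ yields
\begin{equation*}
\CJ(\lambda',\mu';\gamma) \;=\; \frac{\Delta_\gog(\lambda')\Delta_\gog(\mu')}{(2\pi)^r\Delta_\gog(\rho)^2}\int_\tot \Delta_\gog(ix)\,\mathcal{H}(ix,\lambda')\,\mathcal{H}(ix,\mu')\,e^{-i\langle\gamma,x\rangle}\,dx,
\end{equation*}
which matches $\mathscr{F}^{-1}\mathscr{F}[b*M](\gamma)$, so Fourier inversion gives (\ref{eqn:conv-conclusion}).

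The main technical obstacle will be the rigorous justification of these Fourier-analytic manipulations. The function $\Delta_\gog(ix)\mathcal{H}(ix,\lambda')\mathcal{H}(ix,\mu')$ is only tempered (a polynomial multiplied by a bounded oscillatory function), so the integrals above do not converge absolutely and the inverse Fourier transform must be understood in the distributional sense. A clean way to handle this is to insert a Gaussian regulator $e^{-\varepsilon\|x\|^2}$, carry out the algebra for $\varepsilon>0$, and pass to the limit $\varepsilon\to 0^+$, using that both $\CJ(\lambda',\mu';\cdot)$ and $b*M$ are compactly supported piecewise polynomial objects in $\gamma$ and hence uniquely determined by their Fourier transforms as tempered distributions. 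The algebraic heart of the argument is the cancellation of the singular factor $1/\Delta_\gog(ix)$ introduced by the Harish-Chandra formula against the $\Delta_\gog(x)^2$ weight in the definition of $\CJ$, mediated by $W$-skew-invariance; once this cancellation is executed the identification with $\mathscr{F}[b*M]$ is essentially algebraic.
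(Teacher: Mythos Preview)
Your proposal is correct and takes essentially the same approach as the paper: both arguments equate the Fourier transforms of the two sides, using the Harish-Chandra formula (\ref{eqn:hc-this-context}), the box spline transform (\ref{eqn:BS-FT}), the Kirillov character formula, and the character identity $\chi_\lambda\chi_\mu = \sum_\nu C^\nu_{\lambda\mu}\chi_\nu$, then invoke Fourier inversion. The only cosmetic differences are that the paper frames the character identity via the direct product $G\times G$ restricted to the diagonal (which amounts to the same equation) and cites Kirillov's formula as known rather than deriving it from (\ref{eqn:hc-this-context}) and the Weyl denominator, while you are more explicit about the distributional regularization needed to justify the inverse Fourier transform.
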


Recall that $Q$ denotes the root lattice and $\mathcal{C}_+$ the dominant Weyl chamber, so that the sum over $\nu$ in (\ref{eqn:conv-conclusion}) runs over dominant weights satisfying the compatibility criterion $\lambda + \mu - \nu \in Q$.

Although one could deduce Proposition \ref{prop:J-BS} from \cite[prop. 5.14]{DPV} by invoking index-theoretic constructions, it is more instructive to give a detailed proof specialized to the context that we consider here.  The argument below follows a method sketched in \cite[lem. 5.2]{PV}.

\begin{remark} \label{rem:J-cont} \normalfont
 If $\gog$ contains no simple summands isomorphic to $\mathfrak{su}(2)$, then $\CJ$ and $b$ are both continuous functions of $\gamma \in \tot$.  In these cases, (\ref{eqn:conv-conclusion}) and all equations relating $\CJ$ and $b$ below hold pointwise on $\tot$.  If $\gog$ does contain one or more $\mathfrak{su}(2)$ summands, then both $\CJ$ and $b$ have jump discontinuities on the boundaries of their respective supports, and all expressions for $\CJ$ in terms of $b$ should be understood to hold almost everywhere with respect to Lebesgue measure.  Away from these boundary discontinuities we identify $\CJ$ and $b$ with their locally continuous versions, so that we may talk about their pointwise values at all other points of $\tot$. \remdone
\end{remark}

\begin{proof}[Proof of Proposition \ref{prop:J-BS}]

Recall the Kirillov character formula for compact connected Lie groups \cite{AK}: \begin{equation} \label{eqn:KCF} j_\mathfrak{g}^{1/2}(x) \chi_\lambda(e^x) = \int_{\mathcal{O}_{\lambda'}} e^{i \langle \xi, x \rangle} d\beta_{\mathcal{O}_{\lambda'}} (\xi), \quad x \in \mathfrak{t}. \end{equation}  Here $\chi_\lambda$ is the irreducible character of $G$ with highest weight $\lambda$, $\mathcal{O}_{\lambda'}$ is the coadjoint orbit of $\lambda'$, and $d\beta$ is the Liouville measure of the Kostant--Kirillov--Souriau symplectic form on the coadjoint orbit.

Consider now the direct product $G \times G$.  This group is also compact and connected, and its irreducible representations take the form $V_\lambda \otimes V_\mu$, where $V_\lambda$ and $V_\mu$ are irreducible representations of $G$.  Denote the character of such a representation by $\tilde \chi_{\lambda \otimes \mu}$.  The diagonal subgroup $G_\Delta \subset G \times G$ acts on $V_\lambda \otimes V_\mu$ by the usual representation of $G$ on the tensor product, so identifying $G \cong G_\Delta$ we have $\tilde \chi_{\lambda \otimes \mu} |_{G_\Delta} = \chi_\lambda \chi_\mu$.  To compute the decomposition of $V_\lambda \otimes V_\mu$ into irreducible representations $V_\nu$, it thus suffices to decompose $\tilde \chi_{\lambda \otimes \mu} |_{G_\Delta}$ into irreducible characters $\chi_\nu$.

We start by using the Kirillov character formula (\ref{eqn:KCF}) to obtain an expression for $\tilde \chi_{\lambda \otimes \mu}$.  The coadjoint orbit of $G \times G$ corresponding to the irreducible representation $V_\lambda \otimes V_\mu$ is $\mathcal{O}_{\lambda'} \times \mathcal{O}_{\mu'}$.  The positive roots of $G \times G$ are $\Phi^+ \sqcup \Phi^+$, i.e.~we count each positive root of $G$ twice, once for each factor, so that $j^{1/2}_{\mathfrak{g} \oplus \mathfrak{g}}(x,y) = j^{1/2}_\mathfrak{g}(x)j^{1/2}_\mathfrak{g}(y)$.  Putting all this into (\ref{eqn:KCF}), we get: \begin{multline*} j^{1/2}_\mathfrak{g}(x) j^{1/2}_\mathfrak{g}(y) \tilde \chi_{\lambda \otimes \mu} (e^{(x,y)}) = \int_{\mathcal{O}_{\lambda'} \times \mathcal{O}_{\mu'}} e^{i (\langle \xi, x \rangle + \langle \eta, y \rangle)} d\beta_{\mathcal{O}_{\lambda'} \times \mathcal{O}_{\mu'}} (\xi,\eta), \\ (x,y) \in \mathfrak{t} \oplus \mathfrak{t}. \end{multline*}

Restricting to the diagonal $x=y$, this becomes: \begin{multline} \label{eqn:restrict-diag} j_\mathfrak{g}(x) \tilde \chi_{\lambda \otimes \mu} (e^{(x,x)}) = j_\mathfrak{g}(x) \chi_{\lambda} (e^x) \chi_\mu(e^x) = j_\mathfrak{g}(x) \sum_\nu C^\nu_{\lambda \mu} \chi_\nu(e^x) \\ = \int_{\mathcal{O}_{\lambda'} \times \mathcal{O}_{\mu'}} e^{i \langle \xi + \eta, x \rangle} d\beta_{\mathcal{O}_{\lambda'} \times \mathcal{O}_{\mu'}} (\xi,\eta), \quad x \in \mathfrak{t}. \end{multline}

Using the fact that the symplectic volume of $\mathcal{O}_{\nu'}$ is equal to $\Delta_\mathfrak{g}(\nu')/{ \Delta_\mathfrak{g}(\rho)}$ (see e.g.~\cite[sect.~4]{McS}), we can reduce the integral over $\mathcal{O}_{\lambda'} \times \mathcal{O}_{\mu'}$ to two integrals over $G$ with respect to the normalized Haar measure $dg$, so that (\ref{eqn:restrict-diag}) becomes: \begin{equation} \label{eqn:RHS-G-int} j_\mathfrak{g}(x) \sum_\nu C^\nu_{\lambda \mu} \chi_\nu(e^x)  = \frac{\Delta_\mathfrak{g}(\lambda') \Delta_\mathfrak{g}(\mu')}{\Delta_\mathfrak{g}(\rho)^2} \mathcal{H}(ix, \lambda') \mathcal{H}(ix, \mu').\end{equation}

Applying Kirillov's formula (\ref{eqn:KCF}) again to each $\chi_\nu$ and then using the Harish-Chandra integral formula (\ref{eqn:hc-this-context}), we find that the left-hand side of (\ref{eqn:RHS-G-int}) can be rewritten as:
\begin{eqnarray} \label{eqn:LHS-G-int}
\nonumber
j_\mathfrak{g}(x) \sum_\nu C^\nu_{\lambda \mu} \chi_\nu(e^x) &=& j_\mathfrak{g}^{1/2}(x) \sum_\nu C^\nu_{\lambda \mu} \int_{\mathcal{O}_{\nu'}} e^{i\langle \xi, x \rangle} d\beta_{\mathcal{O}_{\nu'}}(\xi) \\
\nonumber
&=& j_\mathfrak{g}^{1/2}(x) \sum_\nu C^\nu_{\lambda \mu} \frac{\Delta_\mathfrak{g}(\nu')}{\Delta_\mathfrak{g}(\rho)} \mathcal{H}(ix, \nu') \\
&=& \frac{j_\mathfrak{g}^{1/2}(x)}{\Delta_\mathfrak{g}(ix)} \sum_\nu C^\nu_{\lambda \mu} \sum_{w \in W}\epsilon(w)e^{i\langle w(\nu'), x \rangle}.\end{eqnarray}
Equating this last expression to the right-hand side of (\ref{eqn:RHS-G-int}) and multiplying through by $\Delta_\mathfrak{g}(ix)$, we finally obtain: \begin{multline} \label{eqn:last-before-IFT} j_\mathfrak{g}^{1/2}(x) \sum_\nu C^\nu_{\lambda \mu} \sum_{w \in W}\epsilon(w)e^{i\langle w(\nu'), x \rangle} \\ = \frac{\Delta_\mathfrak{g}(\lambda') \Delta_\mathfrak{g}(\mu')}{\Delta_\mathfrak{g}(\rho)^2} \Delta_\mathfrak{g}(ix) \mathcal{H}(ix, \lambda') \mathcal{H}(ix, \mu'). \end{multline}

Now we take the inverse Fourier transform, over $\mathfrak{t}$, of each side of (\ref{eqn:last-before-IFT}).  On the left-hand side, using (\ref{eqn:BS-FT}) we find:
\begin{multline} \label{eqn:LHS-IFT-t}
\mathscr{F}^{-1} \left[ j_\mathfrak{g}^{1/2}(x) \sum_\nu C^\nu_{\lambda \mu} \sum_{w \in W}\epsilon(w)e^{i\langle w(\nu'), x \rangle} \right](\gamma) \\ = b(\gamma) \, * \, \left( \sum_\nu C_{\lambda \mu}^\nu \sum_{w \in W} \epsilon(w) \delta_{w(\nu')} \right).
\end{multline}
On the right-hand side, we have:
\begin{align} 
\nonumber \mathscr{F}^{-1} &\left[ \frac{\Delta_\mathfrak{g}(\lambda') \Delta_\mathfrak{g}(\mu')}{\Delta_\mathfrak{g}(\rho)^2} \Delta_\mathfrak{g}(ix) \mathcal{H}(ix, \lambda') \mathcal{H}(ix, \mu') \right](\gamma) \\
\nonumber &= \frac{1}{(2\pi)^r} \frac{\Delta_\mathfrak{g}(\lambda') \Delta_\mathfrak{g}(\mu')}{\Delta_\mathfrak{g}(\rho)^2} \int_\mathfrak{t} \Delta_\mathfrak{g}(ix) \mathcal{H}(ix, \lambda') \mathcal{H}(ix, \mu') e^{-i \langle x, \gamma \rangle} dx \\
\nonumber &= \frac{\Delta_\mathfrak{g}(\lambda') \Delta_\mathfrak{g}(\mu')}{(2\pi)^r\, |W|\, \Delta_\mathfrak{g}(\rho)^2} \int_\mathfrak{t} \Delta_\mathfrak{g}(ix) \mathcal{H}(ix, \lambda') \mathcal{H}(ix, \mu') \left( \sum_{w \in W} \epsilon(w) e^{\langle w(ix), -\gamma \rangle} \right) dx \\
\label{eqn:RHS-IFT-t} &= \frac{\Delta_\mathfrak{g}(\lambda') \Delta_\mathfrak{g}(\mu') \Delta_\mathfrak{g}(\gamma)}{(2\pi)^r\, |W|\, \Delta_\mathfrak{g}(\rho)^3} \int_\mathfrak{t} \Delta_\mathfrak{g}(x)^2 \mathcal{H}(ix, \lambda') \mathcal{H}(ix, \mu') \mathcal{H}(ix, -\gamma) \, dx,
\end{align}
where in the last line we have again applied the Harish-Chandra formula (\ref{eqn:hc-this-context}) and have used the fact that $\Delta_\mathfrak{g}$ is homogeneous of degree $|\Phi^+| = (\dim \mathfrak{g} - r)/2$ to cancel factors of $-1$ and $i$.

Comparing (\ref{eqn:RHS-IFT-t}) to the definition (\ref{eqn:J-def}) of $\mathcal{J}$, we see that this last expression is equal to $\CJ(\lambda', \mu' ; \gamma)$, completing the proof.
\end{proof}

To conclude this section, we quickly derive a more compact expression for $\CJ$.  For $\gog = \mathfrak{su}(n)$, the identity below reduces to \cite[eqn. 18]{Z}.

\begin{proposition}
\label{prop:J-compact}
Define the function $\psi_{\alpha \beta}: \tot \to \C$ by
\begin{equation} \label{eqn:psi-def}
\psi_{\alpha \beta}(x) = (-i)^{|\Phi^+|} \sum_{w, w' \in W} \frac{\epsilon(ww')}{\Delta_\mathfrak{g}(x)} e^{i\langle x, w(\alpha) + w'(\beta) \rangle}.
\end{equation}
Then $\CJ(\alpha, \beta ; \gamma) = \mathscr{F}^{-1}[\psi_{\alpha \beta}](\gamma)$.
\end{proposition}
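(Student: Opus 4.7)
The plan is to prove this identity by direct substitution of the Harish-Chandra formula (\ref{eqn:hc-this-context}) into the definition (\ref{eqn:J-def}) of $\CJ$, and then collapsing one of the resulting Weyl group sums via a change of variables. This is essentially a bookkeeping exercise in the style of the proof of Proposition \ref{prop:J-BS}, albeit somewhat simpler since no Kirillov character formula or orbital reduction is required.

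First I would use (\ref{eqn:hc-this-context}) three times to rewrite each of the orbital integrals $\mathcal{H}(ix, \alpha)$, $\mathcal{H}(ix, \beta)$, $\mathcal{H}(ix, -\gamma)$ appearing in (\ref{eqn:J-def}) as a Weyl-antisymmetrized exponential sum divided by $\Delta_\gog(ix)\Delta_\gog(\cdot)$. The factors of $\Delta_\gog(\alpha)$, $\Delta_\gog(\beta)$ and $\Delta_\gog(-\gamma) = (-1)^{|\Phi^+|}\Delta_\gog(\gamma)$ that appear in the denominators cancel against the corresponding factors in the prefactor of (\ref{eqn:J-def}), while the three factors of $\Delta_\gog(\rho)$ cancel against $\Delta_\gog(\rho)^3$ in the denominator. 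Using $\Delta_\gog(ix) = i^{|\Phi^+|}\Delta_\gog(x)$, the remaining $\Delta_\gog$-factors combine to give $\Delta_\gog(x)^2/\Delta_\gog(ix)^3 = i^{-3|\Phi^+|}/\Delta_\gog(x)$, and combining this with $(-1)^{|\Phi^+|}$ from $\Delta_\gog(-\gamma)$ leaves an overall factor of $(-i)^{|\Phi^+|}$. The net result is
\begin{equation*}
\CJ(\alpha,\beta;\gamma) = \frac{(-i)^{|\Phi^+|}}{(2\pi)^r|W|}\int_\tot \frac{1}{\Delta_\gog(x)}\sum_{w,w',v\in W}\epsilon(ww'v)\,e^{i\langle w(\alpha)+w'(\beta)-v(\gamma),x\rangle}\,dx.
\end{equation*}

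Next I would collapse the sum over $v$. Make the substitution $x\mapsto v(x)$ in the integral for each fixed $v\in W$; since $v$ is orthogonal the measure is preserved, and $\Delta_\gog(v(x)) = \epsilon(v)\Delta_\gog(x)$ produces a sign $\epsilon(v)$ that combines with $\epsilon(ww'v)$ to give $\epsilon(ww')\epsilon(v)^2 = \epsilon(ww')$. Reindexing $w\mapsto vw$ and $w'\mapsto vw'$ (which does not change $\epsilon(ww')$), the exponent becomes $i\langle w(\alpha)+w'(\beta)-\gamma,x\rangle$, which no longer depends on $v$. The trivial $v$-sum contributes a factor of $|W|$ that cancels the $|W|$ in the denominator, leaving
\begin{equation*}
\CJ(\alpha,\beta;\gamma) = \frac{1}{(2\pi)^r}\int_\tot\psi_{\alpha\beta}(x)\,e^{-i\langle x,\gamma\rangle}\,dx = \mathscr{F}^{-1}[\psi_{\alpha\beta}](\gamma),
\end{equation*}
as desired.

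The only real obstacle is keeping careful track of the powers of $i$ and $(-1)$ contributed by $\Delta_\gog(ix)^3$, $\Delta_\gog(-\gamma)$ and the Weyl sign character, and confirming that they conspire to produce exactly the prefactor $(-i)^{|\Phi^+|}$ appearing in (\ref{eqn:psi-def}). A minor technical point is that the integrand $1/\Delta_\gog(x)$ has singularities on the root hyperplanes, but these are removable once combined with the Weyl-antisymmetric exponential sum, so the expression is well defined and the change of variables used to collapse the $v$-sum is valid; alternatively one may interpret the Fourier transform in the distributional sense, which is anyway the natural setting given Remark \ref{rem:J-cont}.
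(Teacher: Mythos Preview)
Your proposal is correct and follows essentially the same route as the paper: substitute the Harish-Chandra formula into the definition of $\CJ$, track the powers of $i$ and $(-1)$ to obtain the prefactor $(-i)^{|\Phi^+|}$, and then collapse the triple Weyl sum to a double sum. The only cosmetic differences are that the paper disposes of the degenerate case $\Delta_\gog(\alpha)\Delta_\gog(\beta)=0$ explicitly at the outset, and it reduces the triple sum by reindexing directly (invoking $W$-invariance of the inner product and $W$-skewness of $\Delta_\gog$) rather than via your change of variable $x\mapsto v(x)$; these two maneuvers are equivalent.
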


\begin{remark} \label{rem:integrand-interp} \normalfont
Some care is required in interpreting both the definition of $\psi_{\alpha \beta}$ and the inverse Fourier transform in Proposition \ref{prop:J-compact}.  For $x$ such that $\Delta_\mathfrak{g}(x) = 0$, we understand the expression (\ref{eqn:psi-def}) as the limit $\psi_{\alpha \beta}(x) := \lim_{t \to 0}\psi_{\alpha \beta}(x + t \rho)$; the double sum over the Weyl group then introduces cancelations such that $\psi_{\alpha \beta}$ vanishes.  Moreover when $\gog$ contains a simple summand isomorphic to $\mathfrak{su}(2)$, the usual integral representation of the inverse Fourier transform of $\psi_{\alpha \beta}$ is not absolutely convergent and must instead be interpreted as a Cauchy principal value. \remdone
\end{remark}

\begin{proof}
We may assume that $\Delta_\gog(\alpha)$ and $\Delta_\gog(\beta)$ are both nonzero, since otherwise $\CJ$ and $\psi_{\alpha \beta}$ both vanish and there is nothing to prove.  Applying Harish-Chandra's formula (\ref{eqn:hc-this-context}) to the definition (\ref{eqn:J-def}) and canceling factors of $\Delta_\gog$, we can rewrite
\begin{multline*}
\mathcal{J}(\alpha, \beta; \gamma) = \\
\frac{(-i)^{|\Phi^+|}}{(2\pi)^r\, |W|} \int_\mathfrak{t} \frac{1}{\Delta_\mathfrak{g}(x)} \ \left( \sum_{w \in W} \epsilon(w) e^{i \langle w(x), \alpha \rangle} \right) \left( \sum_{w' \in W} \epsilon(w') e^{i \langle w'(x), \beta \rangle} \right) \left( \sum_{w'' \in W} \epsilon(w'') e^{-i \langle w''(x), \gamma \rangle} \right) dx \\
= \frac{(-i)^{|\Phi^+|}}{(2\pi)^r\, |W|} \int_\mathfrak{t} \sum_{w, w', w'' \in W} \frac{\epsilon(ww'w'')}{\Delta_\mathfrak{g}(x)} e^{i\langle x, w(\alpha) + w'(\beta) - w''(\gamma) \rangle} dx,
\end{multline*}
where the integral is interpreted according to the discussion in Remark \ref{rem:integrand-interp}.

Using the $W$-invariance of the inner product and the $W$-skewness of $\Delta_\mathfrak{g}$, we can reindex the triple sum as a double sum, giving
\begin{eqnarray} \nonumber \mathcal{J}(\alpha, \beta; \gamma) &=& \frac{(-i)^{|\Phi^+|}}{(2\pi)^r} \int_\mathfrak{t}  \sum_{w, w' \in W} \frac{\epsilon(ww')}{\Delta_\mathfrak{g}(x)} e^{i\langle x, w(\alpha) + w'(\beta) - \gamma \rangle} dx \\
\label{eqn:J-rewritten} &=& \mathscr{F}^{-1}[\psi_{\alpha \beta}](\gamma) \end{eqnarray}
as desired.
\end{proof}

\section{Unimodularity and the $\hat A(\Phi^+)$ operator}
\label{sec:inversion}

As we will see below in section \ref{sec:discrete}, the coefficients $C_{\lambda \mu}^\nu$ can always be recovered from $\CJ$.  In other words, the convolution in Proposition \ref{prop:J-BS} is invertible.  However, when $\gog = \mathfrak{su}(n)$, the inverse admits a particularly convenient form that is more explicit than in most other cases.  This is due to the fact that the positive roots of $\mathfrak{su}(n)$ are {\it unimodular},\footnote{This use of the term ``unimodular'' is not to be confused with the different notion of a unimodular (i.e.~self-dual) lattice.  Indeed the root lattice of $\mathfrak{su}(n)$ is not unimodular.} meaning that any collection of positive roots spanning $\tot$ also generates the root lattice $Q$.  Direct calculations with the other classical and exceptional root systems reveal that the series $\mathfrak{su}(n)$ are the only compact simple Lie algebras with this property.

Unimodularity of $\Phi^+$ implies -- in a delicate sense that we will shortly make precise -- that convolution with $B_c[\Phi^+]$ can be inverted by a differential operator.  By applying this operator to $\CJ$ and taking a limit, we can compute the coefficients $C_{\lambda \mu}^\nu$.  This result is an application of a deconvolution formula originally proved by Dahmen and Micchelli in \cite{DM} and dramatically expanded by Vergne and collaborators in \cite{BV, DPV, DV, SV, V-BS}.  In fact similar deconvolution results hold even when the positive roots are not unimodular, however in these situations the formulae in terms of differential operators do not allow $C_{\lambda \mu}^\nu$ to be recovered from $\CJ$ alone but rather require knowledge of a larger family of piecewise polynomial functions corresponding to Duistermaat--Heckman measures for various submanifolds of $\mathcal{O}_{\lambda'} \times \mathcal{O}_{\mu'}$.  Accordingly we do not treat such cases here and instead refer the reader to \cite{DV, V-BS} for the relevant results.  In section \ref{sec:discrete} we will develop a different approach to deconvolution for both the unimodular and non-unimodular cases.

To start, we review some generalities on convolution with the box spline.  Let $\C^Q$ be the space of complex-valued functions on $Q$ and let $L^1_\mathrm{loc}(\tot)$ be the space of locally integrable functions on $\tot$. Consider the operator \mbox{$\mathcal{T} : \C^Q \to L^1_\mathrm{loc}(\tot)$} defined by
\begin{equation} \label{eqn:T-def}
\mathcal{T}m(\gamma) := b(\gamma) \, * \sum_{\nu \in Q} m(\nu)\, \delta_\nu, \qquad m : Q \to \C.
\end{equation}
If $\mathcal{T}m \ne 0$, then $\mathcal{T} m$ is a piecewise polynomial function of degree $d := |\Phi^+| - r$.

Proposition \ref{prop:J-BS} expresses $\CJ(\lambda', \mu' ; \gamma)$ in the form (\ref{eqn:T-def}), up to an inconsequential translation of $Q$ by $\lambda + \mu$.  Therefore, if we hope to recover $C_{\lambda \mu}^\nu$ from $\CJ$, the first question we should ask is whether $\mathcal{T}$ is injective.  The answer depends on whether or not $\Phi^+$ is unimodular. In particular, from \cite[thm. 3.2]{DM2} and \cite[thm. 4.1]{DM} we have:

\begin{theorem}[Dahmen--Micchelli] \label{thm:T-injective}
The operator $\mathcal{T}$ is injective on $\C^Q$ if and only if $\Phi^+$ is unimodular.  Moreover, if $\Phi^+$ is not unimodular, then the kernel of $\mathcal{T}$ intersects nontrivially with $\ell^\infty(Q)$.
\end{theorem}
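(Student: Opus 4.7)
The plan is to Fourier-transform the equation $\mathcal{T}m = 0$. For $m \in \C^Q$, let $\mu_m := \sum_{\nu \in Q} m(\nu)\,\delta_\nu$; then $\mathcal{T}m = b * \mu_m$ as a tempered distribution on $\tot$, with Fourier transform
\begin{equation*}
\widehat{\mathcal{T}m}(x) = j_\gog^{1/2}(x)\,\widehat{\mu_m}(x)
\end{equation*}
by (\ref{eqn:BS-FT}) and the convolution theorem. Here $\widehat{\mu_m}$ is $2\pi Q^*$-periodic, where $Q^* := \{q \in \tot : \langle q, Q \rangle \subset \Z\}$ is the $\Z$-dual of $Q$ under $\langle \cdot, \cdot \rangle$, and $j_\gog^{1/2}$ vanishes precisely where $\langle \alpha, x \rangle \in 2\pi \Z \setminus \{0\}$ for some $\alpha \in \Phi^+$. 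Hence $\mathcal{T}m = 0$ is equivalent to $\widehat{\mu_m}$ being supported in this hyperplane arrangement.

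\textbf{Not unimodular $\Rightarrow$ bounded nonzero kernel element.} Select a basis $B \subset \Phi^+$ of $\tot$ with $L := \Z B \subsetneq Q$; then the $\Z$-dual $L^*$ of $L$ strictly contains $Q^*$, so I may pick $q \in L^* \setminus Q^*$, set $y_0 := 2\pi q$, and define $m(\nu) := e^{i\langle y_0, \nu \rangle}$. Since $|m| \equiv 1$, we have $m \in \ell^\infty(Q) \setminus \{0\}$. Poisson summation expresses $\widehat{\mu_m}$ as a positive constant times the sum of unit point masses on the shifted lattice $2\pi Q^* - y_0$, so it suffices to verify $j_\gog^{1/2}(p - y_0) = 0$ for every $p \in 2\pi Q^*$. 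For each $\alpha \in B$, both $\langle \alpha, p \rangle$ (since $\alpha \in Q$) and $\langle \alpha, y_0 \rangle = 2\pi \langle \alpha, q \rangle$ (since $\alpha \in L$ and $q \in L^*$) lie in $2\pi \Z$, hence so does $\langle \alpha, p - y_0 \rangle$. If this vanished for every $\alpha \in B$ then $p = y_0$, since $B$ spans $\tot$, contradicting $y_0 \notin 2\pi Q^*$; so some $\alpha \in B$ makes $\langle \alpha, p - y_0 \rangle \in 2\pi \Z \setminus \{0\}$, killing the corresponding $\sin$-factor of $j_\gog^{1/2}(p - y_0)$.

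\textbf{Unimodular $\Rightarrow$ injective on $\C^Q$.} Here the argument must be local, since no tempered structure on $m$ is assumed. Since $b$ has compact support, for any bounded open $U \subset \tot$ the restriction $\mathcal{T}m|_U$ depends on only finitely many values of $m$, so $\mathcal{T}m \equiv 0$ becomes a family of finite linear relations. Pick any basis $B \subset \Phi^+$ of $\tot$; by unimodularity $\Z B = Q$, so the $B$-translates of a fundamental simplex tile $\tot$ compatibly with the lattice. On each top-dimensional chamber of the hyperplane arrangement dual to $\Phi^+$, $\mathcal{T}m$ is polynomial in $\gamma$, and applying an appropriate iterated directional derivative $\prod_{\alpha \in B} \partial_\alpha$ (the classical Dahmen--Micchelli local deconvolution, developed in more detail in Section \ref{sec:inversion}) extracts a single coefficient $m(\nu_0)$ associated to that chamber. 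Iterating over all chambers forces $m(\nu) = 0$ for every $\nu \in Q$.

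The main obstacle is the local inversion in the unimodular case: establishing that the leading contribution to $\mathcal{T}m$ on each chamber really isolates a single lattice coefficient. This uses unimodularity in full strength --- every basis of $\tot$ contained in $\Phi^+$ is a $\Z$-basis of $Q$ --- so that the polynomial expansion of $\mathcal{T}m$ on a chamber cannot acquire spurious contributions from lattice points outside $\Z B$. It is exactly this isolation that fails in the non-unimodular setting, where the residual ambiguity manifests as the exponential kernel element $e^{i\langle y_0, \cdot\rangle}$ constructed above.
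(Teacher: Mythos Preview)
The paper does not prove this theorem: it is quoted as a result of Dahmen--Micchelli, with citations to \cite[thm.~3.2]{DM2} and \cite[thm.~4.1]{DM}, and no argument is given in the text. So there is no ``paper's proof'' to match; your attempt must stand on its own.

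Your construction in the non-unimodular direction is correct and clean. Picking a basis $B \subset \Phi^+$ with $\Z B \subsetneq Q$, choosing $q \in L^* \setminus Q^*$, and setting $m(\nu) = e^{2\pi i \langle q, \nu \rangle}$ gives a bounded nonzero function whose associated tempered distribution has Fourier transform supported on the shifted dual lattice $2\pi Q^* - 2\pi q$. Your verification that $j_\gog^{1/2}$ vanishes at every such point is right: for each $p \in 2\pi Q^*$, the inner products $\langle \alpha, p - 2\pi q\rangle$ with $\alpha \in B$ all lie in $2\pi\Z$, and they cannot all vanish because $2\pi q \notin 2\pi Q^*$, so at least one sinc factor is killed. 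This establishes both the ``only if'' and the ``moreover'' clause.

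The unimodular direction, however, is not a proof. You correctly note that the argument must be local, but then assert without justification that applying $\prod_{\alpha \in B} \partial_\alpha$ to $\mathcal{T}m$ on a chamber ``extracts a single coefficient $m(\nu_0)$.'' This is the entire content of the theorem in this direction, and it is far from obvious: differentiating $b$ along the roots in $B$ produces signed combinations of translates of the lower-dimensional box spline $B_c[\Phi^+ \setminus B]$, not point evaluations, and $\Phi^+ \setminus B$ typically still spans $\tot$. Your parenthetical reference to Section~\ref{sec:inversion} does not help, since the operator developed there is $\hat A(\Phi^+)$, not $\prod_{\alpha \in B}\partial_\alpha$, and the deconvolution results of that section (Theorems~\ref{thm:A-T-inverse} and~\ref{thm:bs-deconv}) are themselves cited from Dahmen--Micchelli rather than proved; invoking them here would be circular. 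The actual Dahmen--Micchelli argument for injectivity in the unimodular case goes through \emph{local linear independence} of the translates $\{b(\cdot - \nu)\}_{\nu \in Q}$ on each chamber, which is a genuinely combinatorial statement about how lattice points sit relative to the zonotope, and your sketch does not engage with it.
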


Fortunately, to compute $C_{\lambda \mu}^\nu$ from $\CJ$, we only need $\mathcal{T}$ to be injective on finitely supported functions, and indeed this always holds (see section \ref{sec:lattice-deconv}).  However, Theorem \ref{thm:T-injective} makes the unimodular case much simpler to handle.  It allows many nice results for $\gog = \mathfrak{su}(n)$ that do not hold in other cases, including the deconvolution formula that we now develop.

For any $\gog$, we can write down an infinite-order differential operator $\hat A(\Phi^+)$, which inverts $\mathcal{T}$ on a specific class of functions that we define below.  The operator $\hat A(\Phi^+)$ is defined by
\begin{equation} \label{eqn:A-def}
\hat A (\Phi^+) := \prod_{\alpha \in \Phi^+} \frac{\partial_\alpha}{e^{\frac{1}{2} \partial_\alpha} - e^{- \frac{1}{2} \partial_\alpha}},
\end{equation}
where $\partial_\alpha f(x) := \frac{d}{dt} \big |_{t = 0} f(x + t \alpha)$ for $f \in C^1(\tot)$.
We interpret this operator as a series expansion, recognizing that we can write
$$\hat A(\Phi^+) = \prod_{\alpha \in \Phi^+} \hat a_\alpha(\partial),$$
where
$$\hat a_\alpha(x) := \frac{\langle \alpha, x \rangle}{2} \mathrm{csch}\left( \frac{\langle \alpha, x \rangle}{2} \right)$$
and $\mathrm{csch}$ is the hyperbolic cosecant.  We have the Taylor series
$$\mathrm{csch}(z) = \frac{1}{z} - \sum_{n = 1}^\infty \frac{2(2^{2n-1} - 1) B_{2n}}{(2n)!} z^{2n-1}$$
where
$$B_{2n} = \sum_{k = 0}^{2n} \sum_{j = 0}^k (-1)^j {k \choose j} \frac{j^{2n}}{k+1}, \qquad n = 1, 2, \hdots$$
is the $(2n)^{\mathrm{th}}$ Bernoulli number, with $B_0 = 1$.  This gives
\begin{equation} \label{eqn:Ahat-rewritten}
\hat A(\Phi^+) = \prod_{\alpha \in \Phi^+} \left( \sum_{n=0}^\infty \frac{(2^{1-2n} - 1)B_{2n}}{(2n)!} \partial_\alpha^{2n} \right) = \sum_{\vec n} \left( \prod_{\alpha \in \Phi^+} \frac{(2^{1-2n_\alpha} - 1)B_{2n_\alpha}}{(2n_\alpha)!} \right) \partial^{2\vec n}
\end{equation}
where in this last expression $\vec n = (n_\alpha)$ runs over multi-indices with $|\Phi^+|$ components, and $\partial^{2\vec n} := \prod_{\alpha \in \Phi^+} \partial_\alpha^{2 n_\alpha}$.

The significance of $\hat A(\Phi^+)$ is that it allows us to invert $\mathcal{T}$ on suitable polynomials.  Concretely, let $D(\Phi^+)$ denote the space of polynomials in the image of $\mathcal{T}$:
\begin{equation} \label{eqn:DMspace}
D(\Phi^+) := \{ \ p \in \Pi(\tot) \ | \ p = \mathcal{T}m \textrm{ for some } m \in \C^Q \ \},
\end{equation}
where $\Pi(\tot)$ denotes the space of polynomial functions on $\tot$. In \cite{DM2}, Dahmen and Micchelli characterized $D(\Phi^+)$ as the solution space of a certain system of partial differential equations and computed its dimension.\footnote{In fact the results of \cite{DM2, DM} are much more general than what we state here, as they concern arbitrary box splines not necessarily associated to root systems.}  In \cite{DM}, they proved the following (see also \cite[thm. 2.23]{DPV}):
\begin{theorem}[Dahmen--Micchelli] \label{thm:A-T-inverse}
The map $p \mapsto \mathcal{T}(p \big |_Q)$ is a linear isomorphism of $D(\Phi^+)$, with inverse $\hat A(\Phi^+)$.
\end{theorem}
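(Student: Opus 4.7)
The plan is to exploit the Fourier duality between $\mathcal{T}$ and $\hat A(\Phi^+)$. Formula (\ref{eqn:BS-FT}) gives $\mathscr{F}[b] = j_\gog^{1/2}$, and the Fourier symbol of $\hat A(\Phi^+)$ on plane waves $e^{i\langle \xi, x\rangle}$ is
\[
\prod_{\alpha \in \Phi^+} \frac{i\langle \alpha, \xi\rangle}{e^{i\langle \alpha, \xi\rangle/2} - e^{-i\langle \alpha, \xi\rangle/2}} \;=\; 1/j_\gog^{1/2}(\xi),
\]
so $\hat A(\Phi^+)$ formally inverts convolution with $b$. The substantive work is to reconcile this \emph{continuous} inverse with the \emph{discrete} convolution underlying $\mathcal{T}$.

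First I would prove the continuous version: for every $q \in \Pi(\tot)$,
\[
\hat A(\Phi^+)\bigl(b \ast q\bigr) \;=\; q, \qquad (b \ast q)(\gamma) \;:=\; \int_\tot b(\gamma - x)\, q(x)\, dx.
\]
Iterating the Fubini definition (\ref{eqn:BS-def}) and Taylor-expanding $q$ yields the operator identity on $\Pi(\tot)$,
\[
b \ast q \;=\; \prod_{\alpha \in \Phi^+} \hat c_\alpha(\partial)\, q, \qquad \hat c_\alpha(\partial) \;:=\; \frac{e^{\partial_\alpha/2} - e^{-\partial_\alpha/2}}{\partial_\alpha},
\]
with each factor acting as a finite sum on any fixed polynomial. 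Since $\hat A(\Phi^+) = \prod_\alpha \hat c_\alpha(\partial)^{-1}$ as formal power series, applying it recovers $q$.

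Next I would bridge to the discrete side by showing that $\mathcal{T}(p|_Q) = b \ast p$ whenever $p \in D(\Phi^+)$. For the compactly supported function $F_\gamma(x) := b(\gamma - x)\, p(x)$, Poisson summation on $Q$ expresses $\sum_{\nu \in Q} F_\gamma(\nu)$ as $\int F_\gamma$ plus a correction indexed by the nonzero elements of the dual lattice. The content of the theorem is that this correction vanishes precisely for $p \in D(\Phi^+)$: the characterization in \cite{DM2} of $D(\Phi^+)$ as the joint kernel of the distinguished differential operators $\prod_{\alpha \in A} \partial_\alpha$, with $A$ ranging over the cocircuits of $\Phi^+$, is exactly the vanishing condition required, since each Poisson correction can be expanded in directional derivatives of $p$ paired with derivatives of $j_\gog^{1/2}$ at dual-lattice points, and only the cocircuit combinations contribute. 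Granted this, $\hat A(\Phi^+)\mathcal{T}(p|_Q) = \hat A(\Phi^+)(b \ast p) = p$ on $D(\Phi^+)$, giving injectivity of $p \mapsto \mathcal{T}(p|_Q)$; bijectivity then follows from the finite-dimensionality of $D(\Phi^+)$ (\cite{DM2}) together with its definition as the polynomial image of $\mathcal{T}$.

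The principal obstacle is the Poisson-summation reconciliation. If that global argument proves unwieldy, the backup plan is to induct on $|\Phi^+|$ using the factorization $b[\Phi^+ \cup \{\alpha\}] = b[\Phi^+] \ast b[\{\alpha\}]$, reducing the identity $\mathcal{T}(p|_Q) = b \ast p$ to a one-dimensional Euler--Maclaurin-type computation along the newly added root direction. The Bernoulli-number coefficients appearing in the Euler--Maclaurin formula are exactly those in the expansion (\ref{eqn:Ahat-rewritten}) of $\hat A(\Phi^+)$, which suggests that the induction closes cleanly and provides an explicit bookkeeping for the cocircuit conditions that define $D(\Phi^+)$.
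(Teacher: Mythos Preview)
The paper does not give its own proof of this theorem. It is quoted as a result of Dahmen--Micchelli, with the citations ``In \cite{DM}, they proved the following (see also \cite[thm.~2.23]{DPV})'' and no further argument. So there is no proof in the paper to compare your proposal against; you are attempting to supply a proof for a result the paper merely imports.

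As for the proposal itself: the continuous statement $\hat A(\Phi^+)(b\ast q)=q$ on $\Pi(\tot)$ is fine and your derivation via $b\ast(\,\cdot\,)=\prod_\alpha \hat c_\alpha(\partial)$ is correct. The substantive claim is the discrete--continuous bridge $\mathcal{T}(p|_Q)=b\ast p$ for $p\in D(\Phi^+)$, and here your Poisson-summation sketch is incomplete. Concretely, after the change of variables you are reduced to showing that for every nonzero $\eta\in 2\pi P^\vee$ and every $p\in D(\Phi^+)$, the pairing of the Taylor coefficients of $p$ with the derivatives of $j_\gog^{1/2}$ at $\eta$ vanishes. You assert that the cocircuit description of $D(\Phi^+)$ is ``exactly the vanishing condition required,'' but you have not actually matched the order of vanishing of $j_\gog^{1/2}$ at $\eta$ to the cocircuit operators annihilating $p$. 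One must argue that the set $A_\eta:=\{\alpha\in\Phi^+:\langle\alpha,\eta\rangle\neq 0\}$ contains a cocircuit (since $\Phi^+\setminus A_\eta$ lies in $\eta^\perp$ and hence fails to span), and then that the factored zero of $j_\gog^{1/2}$ along the directions in $A_\eta$ kills every surviving term in the Taylor pairing once $\prod_{\alpha\in A}\partial_\alpha p=0$ for all cocircuits $A$. That argument can be made to work, but it is the heart of the matter and your write-up skips it.

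Your backup plan---induction on $|\Phi^+|$ via $B_c[\Phi^+\cup\{\alpha\}]=B_c[\Phi^+]\ast B_c[\{\alpha\}]$ and a one-dimensional Euler--Maclaurin step---is closer in spirit to how the result is established in \cite{DM, DPV}, and is probably the cleaner route if you intend to write out a full proof.
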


The above holds for any $\gog$, but in the unimodular case where $\mathcal{T}$ is injective not merely on $D(\Phi^+)$ but on all of $\C^Q$, we can use $\hat A(\Phi^+)$ to construct a global inverse of $\mathcal{T}$, giving a general deconvolution formula (see \cite{DM} and also \mbox{\cite[thm. 2.1]{DV}}):

\begin{theorem}[Dahmen--Micchelli, Duflo--Vergne]
\label{thm:bs-deconv}
Suppose that $\Phi^+$ is unimodular and let $m: Q \to \C$.  Choose a vector $\eta$ in the positive cone generated by $\Phi^+$ such that $\eta$ does not lie in any hyperplane spanned by elements of $\Phi^+$.  Then
\begin{equation} \label{eqn:deconv} m(\nu) = \lim_{t \to 0^+} \hat A(\Phi^+) \mathcal{T}m(\nu + t\eta), \qquad \nu \in Q. \end{equation}
\end{theorem}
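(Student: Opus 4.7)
The plan is to reduce the theorem to the local polynomial inversion statement of Theorem \ref{thm:A-T-inverse}, using unimodularity of $\Phi^+$ at the global step.

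First I would set up the local picture. The function $\mathcal{T}m$ is piecewise polynomial on $\tot$, with singularities confined to a locally finite family of affine hyperplanes of the form $\nu' + \mathrm{span}(S)$ for $\nu' \in Q$ and $S \subsetneq \Phi^+$ of corank $1$. The genericity of $\eta$ ensures that for any $\nu \in Q$ and sufficiently small $t > 0$, the point $\nu + t\eta$ lies in the interior of a unique top-dimensional chamber $C_\nu$, on which $\mathcal{T}m$ agrees with a single polynomial $p_\nu \in D(\Phi^+)$ (by standard Dahmen--Micchelli theory, the local polynomial pieces of any box spline convolution on the lattice lie in this space). Since $\hat A(\Phi^+)$ acts on polynomials of bounded degree via a terminating Taylor series, $\hat A(\Phi^+)\mathcal{T}m$ equals the polynomial $q_\nu := \hat A(\Phi^+) p_\nu$ on $C_\nu$, so the limit in \eqref{eqn:deconv} is simply $q_\nu(\nu)$. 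By Theorem \ref{thm:A-T-inverse}, $q_\nu \in D(\Phi^+)$ and $\mathcal{T}(q_\nu|_Q) = p_\nu$ as an identity of polynomials on all of $\tot$.

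The theorem thus reduces to the identity $q_\nu(\nu) = m(\nu)$. By linearity, and since only the finitely many lattice points in the bounded zonotope $(\nu + t\eta) - Z$ (where $Z$ is the support of $b$) contribute to either side of \eqref{eqn:deconv} near $\nu$, we may assume $m = \delta_{\nu_0}$ for a single $\nu_0 \in Q$. After translating by $-\nu_0$, the goal becomes $\lim_{t \to 0^+} \hat A(\Phi^+) b(\nu + t\eta) = \delta_{\nu, 0}$ for $\nu \in Q$. The case $\nu \notin Z$ is automatic since both the local polynomial piece of $b$ and the corresponding value of $m$ vanish.

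The main obstacle is the case $\nu \in Z \cap Q$, especially $\nu = 0$, where the assertion becomes that $\hat A(\Phi^+)$ applied to the local polynomial piece of $b$ on the chamber adjacent to the origin in direction $\eta$ evaluates to $1$. I would attack this via the explicit piecewise-polynomial structure of $b$: the Dahmen--Micchelli decomposition expresses $b$ as a signed combination of truncated power functions indexed by cones spanned by subsets of $\Phi^+$, and applying $\hat A(\Phi^+)$ to this decomposition reduces, via unimodularity, to a combinatorial identity on the lattice essentially equivalent to the partition-of-unity property $\sum_{\nu' \in Q} b(\gamma - \nu') = 1$, which holds precisely in the unimodular case. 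A more conceptual alternative is to set $\tilde m := m - q_\nu|_Q \in \C^Q$, observe that $\mathcal{T}\tilde m$ vanishes identically on $C_\nu$, and combine Theorem \ref{thm:T-injective} with an inductive argument — ordering lattice points by their $-\eta$-coordinate and peeling off contributions one at a time — to force $\tilde m(\nu) = 0$. Either approach ultimately depends on bridging the local polynomial identity supplied by Theorem \ref{thm:A-T-inverse} to the required pointwise discrete identity, and it is precisely here that unimodularity of $\Phi^+$ plays its essential role.
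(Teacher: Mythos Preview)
The paper does not give its own proof of Theorem~\ref{thm:bs-deconv}: it is stated as a result of Dahmen--Micchelli and Duflo--Vergne, with a citation to \cite{DM} and \cite[thm.~2.1]{DV}, and is then used as a black box. So there is no in-paper argument to compare your proposal against.

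That said, your outline is structurally sound and in line with how the cited references proceed: reduce to a single chamber, identify the local polynomial piece $p_\nu$ of $\mathcal{T}m$ as an element of $D(\Phi^+)$, and apply Theorem~\ref{thm:A-T-inverse} to get $q_\nu = \hat A(\Phi^+)p_\nu$ with $\mathcal{T}(q_\nu|_Q) = p_\nu$. The genuine work, as you correctly flag, is passing from this global polynomial identity to the pointwise statement $q_\nu(\nu) = m(\nu)$, and here your write-up remains a sketch. Two comments on the sketched approaches: first, your claim that the partition-of-unity identity $\sum_{\nu'\in Q} b(\gamma - \nu') = 1$ holds ``precisely in the unimodular case'' is at odds with Proposition~\ref{prop:b-sum-1} of this very paper, which asserts it for arbitrary $\gog$; unimodularity enters the actual proofs not through the partition of unity but through the structure of the zonotope's vertex set relative to the lattice. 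Second, your alternative route via $\tilde m = m - q_\nu|_Q$ needs more than Theorem~\ref{thm:T-injective}, since you only know $\mathcal{T}\tilde m$ vanishes on one chamber, not everywhere; the inductive peeling you mention can be made to work, but it requires the local linear independence of box spline translates (the result of \cite{DM2}), which is exactly the unimodular-only ingredient and is stronger than mere global injectivity.

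In short: reasonable plan, correctly located difficulty, but the decisive step is still only gestured at, and since the paper itself defers the proof to the literature you would need to consult \cite{DM} or \cite{DV} to close it.
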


\begin{remark}
\label{rem:A-truncate} \normalfont
The limit in (\ref{eqn:deconv}) ensures that we only ever evaluate $\hat A(\Phi^+) \mathcal{T}m$ on the interior of a polynomial domain.  This allows us to truncate the series expansion (\ref{eqn:Ahat-rewritten}) so that $\hat A(\Phi^+)$ acts locally as a finite-order operator. \remdone
\end{remark}

Observing that the translation $Q \to \lambda + \mu + Q$ in (\ref{eqn:conv-conclusion}) is inconsequential for the deconvolution formula, we obtain the following corollary to Proposition \ref{prop:J-BS} and Theorem \ref{thm:bs-deconv}, which is the sought-after expression for the Littlewood--Richardson coefficients.
\begin{corollary}
\label{cor:inversion}
Let $\gog = \mathfrak{su}(n)$ and choose $\eta$ as in Theorem \ref{thm:bs-deconv}. For a compatible triple $(\lambda, \mu, \nu)$ of dominant weights, the Littlewood--Richardson coefficient is expressed in terms of the volume function as
\begin{equation} \label{eqn:inversion-formula} C^\nu_{\lambda \mu} = \lim_{t \to 0^+} \hat A(\Phi^+) \mathcal{J}(\lambda', \mu' ; \nu' + t \eta),\end{equation} where the operator $\hat A(\Phi^+)$ acts in the third argument of $\mathcal{J}$. 
\end{corollary}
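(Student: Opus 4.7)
The strategy is to view Proposition \ref{prop:J-BS} as exhibiting $\CJ(\lambda',\mu';\cdot)$ as the image under the operator $\mathcal{T}$ of (\ref{eqn:T-def}) of a suitable lattice function, up to an inconsequential translation of $Q$, and then to invert $\mathcal{T}$ using the deconvolution formula of Theorem \ref{thm:bs-deconv}. That theorem applies precisely because $\Phi^+$ is unimodular for $\gog = \mathfrak{su}(n)$. To set up the translation, let $y_0 := \lambda+\mu+\rho$. Since $W$ preserves $Q$ and $w(\rho)-\rho \in Q$ for every $w \in W$, each point $w(\nu^{*\prime})$ in the signed measure on the right-hand side of (\ref{eqn:conv-conclusion}) lies in $y_0 + Q$, so I would define $m : Q \to \C$ by
$$m(q) \ := \ \sum_{\substack{\nu^* \in (\lambda+\mu)+Q \\ \ \ \cap\,\mathcal{C}_+}} \ \sum_{\substack{w \in W \\ w(\nu^{*\prime}) \,=\, q + y_0}} \epsilon(w) \, C^{\nu^*}_{\lambda\mu}.$$
A short bookkeeping check using (\ref{eqn:T-def}) and (\ref{eqn:conv-conclusion}) shows that $\CJ(\lambda',\mu';\gamma) = (\mathcal{T}m)(\gamma - y_0)$ for all $\gamma \in \tot$.

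Because $\hat A(\Phi^+)$ is a constant-coefficient formal differential operator (see (\ref{eqn:Ahat-rewritten})), it commutes with translations. Applying Theorem \ref{thm:bs-deconv} to $m$ and translating the argument back by $y_0$ therefore yields
$$m(q) \ = \ \lim_{t \to 0^+} \hat A(\Phi^+) \, \CJ(\lambda', \mu'\,;\, q + y_0 + t\eta), \qquad q \in Q.$$
The compatibility hypothesis $\lambda+\mu-\nu \in Q$ makes the substitution $q := \nu-\lambda-\mu$ legitimate, and for this choice $q + y_0 = \nu + \rho = \nu'$, so the right-hand side matches the expression appearing in (\ref{eqn:inversion-formula}). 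It then remains to show that $m(\nu-\lambda-\mu) = C^\nu_{\lambda\mu}$. This amounts to isolating a unique surviving term in the double sum defining $m$: since $\nu \in \mathcal{C}_+$ and $\rho$ is strictly dominant, $\nu'$ lies in the interior of $\mathcal{C}_+$, and so does every $\nu^{*\prime}$ in the sum; the Weyl group acts freely on that interior, so $w(\nu^{*\prime}) = \nu'$ forces $w = e$ and $\nu^* = \nu$, leaving the single summand $\epsilon(e)\,C^\nu_{\lambda\mu} = C^\nu_{\lambda\mu}$.

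No step of this plan is a serious mathematical obstacle, as both ingredients are already in hand; the delicate point is rather one of interpretation. By Remark \ref{rem:A-truncate}, the infinite series (\ref{eqn:Ahat-rewritten}) defining $\hat A(\Phi^+)$ does not converge as a global operator but must be read locally as a finite-order differential operator on a neighbourhood of the regular point $\nu' + t\eta$ for small $t > 0$. This is precisely why the one-sided limit and the nondegeneracy condition on $\eta$ appear in the statement, and keeping track of these conventions, together with the translation by $y_0$, is the main bookkeeping that has to be maintained throughout the proof.
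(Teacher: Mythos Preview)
Your proposal is correct and follows essentially the same approach as the paper: the paper simply remarks that ``the translation $Q \to \lambda + \mu + Q$ in (\ref{eqn:conv-conclusion}) is inconsequential for the deconvolution formula,'' whereas you spell out that translation explicitly via $y_0 = \lambda+\mu+\rho$, define the lattice function $m$, and verify that $m(\nu-\lambda-\mu) = C_{\lambda\mu}^\nu$ using the freeness of the Weyl action on the open chamber. This is exactly the bookkeeping the paper suppresses, so your argument is a faithful expansion of the paper's own.
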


Note that $\mathcal{J}$ is a piecewise polynomial of degree $d$, so that following Remark \ref{rem:A-truncate} above, we can replace $\hat A(\Phi^+)$ in (\ref{eqn:inversion-formula}) with its series expansion to this same order.  Thus we have expressed $C_{\lambda \mu}^\nu$ as a finite sum of derivatives of $\CJ$.  Putting together (\ref{eqn:Ahat-rewritten}), (\ref{eqn:inversion-formula}) and Proposition \ref{prop:J-compact}, we can write Corollary \ref{cor:inversion} in a more explicit form:

\begin{proposition}
\label{prop:inversion-reworked}
For $\gog = \mathfrak{su}(n)$, $\eta$ as in Theorem \ref{thm:bs-deconv}, $(\lambda, \mu, \nu)$ a compatible triple of dominant weights, and $\psi_{\lambda' \mu'}$ as in (\ref{eqn:psi-def}),
\begin{equation}
\label{eqn:inversion-reworked}
C_{\lambda \mu}^\nu = \lim_{t \to 0^+} \sum_{|\vec n| \le \lfloor d/2 \rfloor} \!\!\!\left( \prod_{\alpha \in \Phi^+} \frac{(2^{1-2n_\alpha} - 1)B_{2n_\alpha}}{(2n_\alpha)!} \right) \partial^{2\vec n} \mathscr{F}^{-1}[\psi_{\lambda' \mu'}](\nu' + t\eta).
\end{equation}
\end{proposition}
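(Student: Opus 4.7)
The plan is to combine Corollary \ref{cor:inversion}, the series expansion (\ref{eqn:Ahat-rewritten}) for the operator $\hat A(\Phi^+)$, and the compact formula $\CJ(\alpha, \beta; \gamma) = \mathscr{F}^{-1}[\psi_{\alpha\beta}](\gamma)$ of Proposition \ref{prop:J-compact}. The statement is essentially a repackaging, so the proof should largely consist of identifying the correct substitutions and carefully justifying the truncation of the infinite series to finite order.

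First, I would invoke Corollary \ref{cor:inversion} to write
\[
C_{\lambda\mu}^\nu \;=\; \lim_{t\to 0^+} \hat A(\Phi^+)\,\CJ(\lambda',\mu';\nu'+t\eta),
\]
with $\hat A(\Phi^+)$ acting on the third argument. By Proposition \ref{prop:J-compact}, the third-argument function $\gamma \mapsto \CJ(\lambda',\mu';\gamma)$ equals $\mathscr{F}^{-1}[\psi_{\lambda'\mu'}](\gamma)$, so substituting gives
\[
C_{\lambda\mu}^\nu \;=\; \lim_{t\to 0^+} \hat A(\Phi^+)\,\mathscr{F}^{-1}[\psi_{\lambda'\mu'}](\nu'+t\eta).
\]
Next, I would expand $\hat A(\Phi^+)$ using the formula (\ref{eqn:Ahat-rewritten}), which expresses it as a formal sum over multi-indices $\vec n = (n_\alpha)_{\alpha \in \Phi^+}$ of the operators $\partial^{2\vec n}$ with the explicit coefficients involving Bernoulli numbers. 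Substituting this expansion directly yields an expression of the desired form, but with the sum ranging over all multi-indices.

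The main step is justifying the truncation of this sum at $|\vec n| \le \lfloor d/2 \rfloor$. Here I would appeal to Remark \ref{rem:A-truncate}: by Definition \ref{def:J-def} the volume function $\CJ$ is piecewise polynomial of degree $d = |\Phi^+| - r$ in its third argument, and the choice of $\eta$ in Theorem \ref{thm:bs-deconv} guarantees that, for all sufficiently small $t > 0$, the point $\nu' + t\eta$ lies in the interior of a single polynomial domain of $\CJ(\lambda',\mu';\,\cdot\,)$. On that domain $\CJ$ agrees with a genuine polynomial in $\gamma$ of total degree at most $d$, so any partial-derivative operator $\partial^{2\vec n}$ of total order $2|\vec n| > d$ annihilates it. Therefore every term in the expansion (\ref{eqn:Ahat-rewritten}) with $2|\vec n| > d$ contributes zero, and the infinite series collapses to the finite sum over $|\vec n| \le \lfloor d/2 \rfloor$, which is exactly (\ref{eqn:inversion-reworked}).

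The main technical subtlety is the interpretation of the inverse Fourier transform itself, rather than the algebra: as noted in Remark \ref{rem:integrand-interp}, for generic simple $\gog$ of type $A_{n-1}$ with $n \ge 3$ the integrand defining $\mathscr{F}^{-1}[\psi_{\lambda'\mu'}]$ is absolutely integrable and the pointwise evaluation at $\nu'+t\eta$ causes no trouble (and for $\mathfrak{su}(2)$ one would need the Cauchy principal value interpretation, but in that case $d = 0$ and the statement is trivial). Beyond this, the only potential pitfall is to make sure the differential operator commutes with the inverse Fourier transform in the sense used here, which is immediate because $\mathscr{F}^{-1}[\psi_{\lambda'\mu'}]$ is locally a polynomial near $\nu'$ and we are free to differentiate that polynomial representation directly, so no additional smoothness justification beyond the piecewise polynomial structure already recorded in Definition \ref{def:J-def} is needed.
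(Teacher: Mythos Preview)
Your proposal is correct and follows exactly the route the paper itself indicates: combine Corollary \ref{cor:inversion}, the series expansion (\ref{eqn:Ahat-rewritten}), and Proposition \ref{prop:J-compact}, then truncate using the piecewise-polynomial degree bound as in Remark \ref{rem:A-truncate}. Your write-up simply unpacks in more detail the one-line justification the paper gives before stating the proposition.
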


\begin{example} \label{ex:Ahat-low-rank} \normalfont
For $\mathfrak{su}(2)$ and $\mathfrak{su}(3)$, $d = 0$ and 1 respectively, so only the degree 0 term of $\hat A(\Phi^+)$ acts non-trivially, and we recover the well-known result (see \cite{CZ1}): 
\begin{equation} \label{Ahat-su2su3}
C_{\lambda \mu}^\nu = \CJ(\lambda', \mu' ; \nu').
\end{equation}
The first non-trivial case is $\mathfrak{su}(4)$, where we have $d=3$, so that we must expand $\hat A(\Phi^+)$ to second order:
\begin{equation} \label{Ahat-su4}
C_{\lambda \mu}^\nu = \lim_{t \to 0^+} \left( 1 - \frac{1}{24} \sum_{\alpha \in \Phi^+} \partial_\alpha^2 \right) \CJ(\lambda', \mu' ; \nu' + t \eta).
\end{equation} \remdone
\end{example}

\section{First applications}
\label{sec:consequences}

In this section we use Propositions \ref{prop:J-BS} and \ref{prop:inversion-reworked} to give new proofs of three known results: the number of continuous derivatives of $\CJ$, the semiclassical limit by which $\CJ(\lambda, \mu; \nu)$ approximates $C_{\lambda, \mu}^\nu$ for $\lambda, \mu, \nu$ large, and a polynomiality property of the Littlewood--Richardson coefficients.  These proofs illustrate the wide range of phenomena that can be understood as consequences of the box spline convolution identity (\ref{eqn:conv-conclusion}).  The considerations of sections \ref{sec:J-reg} and \ref{sec:semiclassical} depend only on Proposition \ref{prop:J-BS} and apply to arbitrary compact semisimple $\gog$, while in section \ref{sec:polynomiality} we take $\gog = \mathfrak{su}(n)$.

\subsection{Regularity of $\CJ$}
\label{sec:J-reg}

In \cite{CMZ, CZ1}, the number of continuous derivatives of $\mathcal{J}$ was determined via Fourier-analytic arguments, essentially by studying the decay of the function $\psi_{\alpha \beta}$ defined in (\ref{eqn:psi-def}). The following corollary to Proposition \ref{prop:J-BS} provides an alternative proof of the degree of differentiability of $\mathcal{J}$.  Recall that the density $b(\gamma)$ of $B_c[\Phi^+]$ is a piecewise polynomial function of degree $d = |\Phi^+| - r$, supported on the convex hull of the Weyl orbit of $\rho$.

\begin{corollary}
\label{cor:J-regularity}
Fix arbitrary $\alpha, \beta \in \tot$.  As a function of $\gamma$, $\CJ(\alpha, \beta; \gamma)$ has at least as many continuous derivatives as $b(\gamma)$.
\end{corollary}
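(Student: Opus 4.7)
The plan is to derive the claim directly from Proposition \ref{prop:J-BS}. First, consider the case $\alpha = \lambda', \beta = \mu'$ with $\lambda, \mu$ dominant weights. Proposition \ref{prop:J-BS} writes $\CJ(\lambda', \mu'; \gamma)$ as a finite $\C$-linear combination of translates of $b$,
\begin{equation*}
\CJ(\lambda', \mu'; \gamma) = \sum_{\nu} C_{\lambda\mu}^\nu \sum_{w \in W} \epsilon(w)\, b(\gamma - w(\nu')),
\end{equation*}
and since differentiation commutes termwise through a finite sum, $\CJ(\lambda', \mu'; \cdot)$ inherits at least as many continuous derivatives in $\gamma$ as $b$. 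This settles the corollary on the discrete set of shifted dominant weight pairs.

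To pass from these specific pairs to arbitrary $(\alpha, \beta) \in \tot^2$, I would exploit the homogeneity $\CJ(t\alpha, t\beta; t\gamma) = t^d \CJ(\alpha, \beta; \gamma)$ with $d = |\Phi^+| - r$, which immediately extends the statement to the scaled set $\{(t\lambda', t\mu') : \lambda, \mu \text{ dominant weights}, \ t > 0\}$; together with the $W$-skew-invariance of $\CJ$ in its first two arguments, this gives a dense subset of $\tot^2$ on which the claim already holds. The function $\CJ$ is jointly piecewise polynomial of total degree $d$ on a hyperplane arrangement $\mathcal{A}$ in $\tot^3$ whose walls are determined purely by the root system. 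The transverse regularity of $\CJ$ across each wall of $\mathcal{A}$ is a global invariant of the function, measurable at any generic point of the wall. Testing this invariant along slices $\{\alpha\} \times \{\beta\} \times \tot$ for $(\alpha, \beta)$ in the dense set above shows that the transverse regularity across every wall of $\mathcal{A}$ is at least that of $b$. Restricting $\CJ$ to an arbitrary slice then yields a $C^k$ function of $\gamma$ whenever $b$ is $C^k$.

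The main obstacle is the final implication: that transverse regularity across walls in $\tot^3$ passes to regularity in $\gamma$ upon restricting to a slice. For generic $(\alpha, \beta)$ this is immediate, since the slice is transverse to every wall of $\mathcal{A}$. For non-generic $(\alpha, \beta)$ at which several walls of $\mathcal{A}$ coincide on the slice, one must check that such coincidences can only introduce additional cancellations in the jumps of $\CJ$, never spurious discontinuities. I would verify this by expressing each wall jump as a polynomial on the wall and tracking how these polynomials combine under restriction, so that the order of vanishing characterizing regularity is preserved.
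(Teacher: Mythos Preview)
Your first step matches the paper's: Proposition~\ref{prop:J-BS} gives the claim for $(\alpha,\beta)=(\lambda',\mu')$, and both you and the paper then extend by homogeneity and density. The extension arguments differ, however. The paper exploits the symmetry of $\CJ$ in its three arguments, visible in the definition~(\ref{eqn:J-def}), to transfer regularity in $\gamma$ to regularity in $\alpha$ and $\beta$; it then argues that $D\CJ$ (for $D$ any constant-coefficient operator with $Db$ continuous) is a continuous piecewise polynomial on all of $\tot^3$ and concludes by a uniform-limit argument. Your route is more structural and does not invoke that symmetry: you read off the order of vanishing of the jump of $\CJ$ across each wall of the singular arrangement by testing on the dense family of slices.

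The gap you flag is genuine, but its resolution is cleaner than the one you sketch. Rather than tracking how wall jumps combine on a possibly degenerate slice, first upgrade to a global statement on $\tot^3$: once you know the jump across each wall $H_i=\{\ell_i=0\}$ is divisible by $\ell_i^{k+1}$, it follows that $\CJ\in C^k(\tot^3)$. Indeed, for any constant-coefficient $D$ of order $\le k$, the piecewise-defined $D\CJ$ has jumps divisible by $\ell_i$ across each $H_i$ and hence extends continuously across the open part of every wall; continuity at the codimension-$\ge 2$ strata then holds because any two chambers meeting there are linked by a chain of adjacent chambers whose polynomials agree on the shared face. With $\CJ\in C^k(\tot^3)$ in hand, restriction to \emph{any} slice $\{\alpha\}\times\{\beta\}\times\tot$ is automatically $C^k$ in $\gamma$, and the non-generic case needs no separate analysis.
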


\begin{proof}
By the $W$-skewness of $\CJ$ in all three arguments, it suffices to consider the case that $\alpha, \beta, \gamma$ all lie in the dominant Weyl chamber.  Proposition \ref{prop:J-BS} implies the statement in the case that $\alpha = \lambda'$, $\beta = \mu'$ for $\lambda, \mu$ a pair of dominant weights.

The statement for arbitrary $\alpha, \beta$ then follows by an approximation argument.  First note that if $\gog$ has a simple summand isomorphic to $\mathfrak{su}(2)$, then $b$ is discontinuous and there is nothing to prove.  Thus we may assume $\gog$ contains no $\mathfrak{su}(2)$ summands, so that $b$ is continuous.  For any $\varepsilon > 0$ we can then choose $s_\varepsilon > 0$ and dominant weights $\lambda_\varepsilon, \mu_\varepsilon$ such that
$$ | \CJ(\alpha, \beta, \gamma) - \CJ(\lambda_\varepsilon' /s_\varepsilon , \mu_\varepsilon' / s_\varepsilon ,\gamma) | < \varepsilon.$$
By homogeneity, $$\CJ(\lambda_\varepsilon' /s_\varepsilon , \mu_\varepsilon' / s_\varepsilon ,\gamma) = s_{\varepsilon}^{-d} \CJ(\lambda_\varepsilon', \mu_\varepsilon', s_\epsilon \gamma),$$ so that $\CJ(\lambda_\varepsilon' /s_\varepsilon , \mu_\varepsilon' / s_\varepsilon ,\gamma)$ also has at least the same regularity in $\gamma$ as $b$ does.  Let $D$ be any constant-coefficient differential operator such that $Db$ is continuous.  If we apply $D$ to $\CJ$ in the third argument, then $D \CJ(\lambda' /s , \mu' / s ,\gamma)$ is a continuous function of $\gamma$ whenever $\lambda, \mu$ are dominant weights and $s > 0$.  For this to be the case, clearly $\CJ(\lambda' /s , \mu' / s ,\gamma)$ must also be continuous in $\gamma$, and from the symmetry of the definition (\ref{eqn:J-def}) of $\CJ$, both $\CJ$ and $D\CJ$ must be continuous in their first two arguments as well.  Since points of the form $\lambda' /s $ are dense in the dominant chamber, we find that $D \CJ$ defines a continuous piecewise polynomial function on $\tot^3$.  In particular, $D \CJ(\lambda'_\varepsilon /s_\varepsilon , \mu'_\varepsilon / s_\varepsilon ,\gamma)$ converges uniformly in $\gamma$ as $\varepsilon \to 0$, so that its limit is a continuous function of $\gamma$ and equals the derivative $D\CJ(\alpha, \beta, \gamma)$.
\end{proof}

In general the degree of differentiability of $b$ depends on $\gog$, but it may be determined based on the following standard fact about box splines, which is proved for example in \cite[sect. 1.5]{PBP}:

\begin{theorem} \label{thm:b-regularity}
If all subsets of $\Phi^+$ obtained by deleting $k+1$ roots span $\tot$, then $b \in C^k(\tot)$.
\end{theorem}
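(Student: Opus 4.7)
The plan is to prove the result by induction on $k$, using a differentiation identity that reduces derivatives of $b = b_{\Phi^+}$ to box splines on smaller subsets of roots. For any finite spanning collection $X \subset \tot$ and any $v \in X$, the identity reads
\begin{equation*}
\partial_v b_X(y) = b_{X \setminus \{v\}}(y + v/2) - b_{X \setminus \{v\}}(y - v/2).
\end{equation*}
It is an immediate consequence of (\ref{eqn:BS-FT}): multiplying $\mathscr{F}[b_X]$ by the symbol $i\langle v, x\rangle$ of $\partial_v$ cancels the denominator of the $v$-factor in the product, leaving $(e^{i\langle v, x\rangle/2} - e^{-i\langle v, x\rangle/2}) \mathscr{F}[b_{X \setminus \{v\}}](x)$, and inverse Fourier transformation produces the stated difference of translates.

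The base case $k = 0$ asks for continuity of $b$ when every $(|\Phi^+| - 1)$-subset of $\Phi^+$ spans. Fix any $v \in \Phi^+$. Since $\Phi^+ \setminus \{v\}$ spans, $b_{\Phi^+ \setminus \{v\}}$ is a bounded, compactly supported piecewise polynomial, and the definition (\ref{eqn:BS-def}) gives
\begin{equation*}
b(y) = \int_{-1/2}^{1/2} b_{\Phi^+ \setminus \{v\}}(y - tv)\, dt,
\end{equation*}
which is the convolution of an $L^\infty$ compactly supported function with an $L^1$ function and is therefore continuous.

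For the inductive step, assume the theorem holds through level $k - 1$, and suppose every subset of $\Phi^+$ obtained by deleting $k + 1$ roots spans $\tot$. To prove $b \in C^k(\tot)$ it suffices to show $\partial_w b \in C^{k-1}(\tot)$ for every $w \in \tot$. Writing $w = \sum_{\alpha \in \Phi^+} c_\alpha \alpha$ (possible because $\Phi^+$ spans) and applying the differentiation identity, I obtain
\begin{equation*}
\partial_w b = \sum_{\alpha \in \Phi^+} c_\alpha \bigl[\, b_{\Phi^+ \setminus \{\alpha\}}(\cdot + \alpha/2) - b_{\Phi^+ \setminus \{\alpha\}}(\cdot - \alpha/2)\,\bigr].
\end{equation*}
The inductive hypothesis applied to $\Phi^+ \setminus \{\alpha\}$ at level $k - 1$ requires every $(|\Phi^+| - 1 - k)$-subset of $\Phi^+ \setminus \{\alpha\}$ to span $\tot$; but any such subset is an $(|\Phi^+| - k - 1)$-subset of $\Phi^+$ whose complement in $\Phi^+$ contains $\alpha$ and has size $k+1$, and hence it spans by hypothesis. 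Each $b_{\Phi^+ \setminus \{\alpha\}}$ is therefore of class $C^{k-1}$, and so is $\partial_w b$.

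The argument is clean once the differentiation identity is in hand, so I do not foresee a major conceptual obstacle. The point requiring the most care is the base case, where one must justify that convolution of a bounded, compactly supported piecewise polynomial with the uniform density on a segment yields a continuous function; this follows from the standard fact that $L^\infty_{\mathrm{c}} * L^1 \subset C^0$. Everything else reduces to combinatorial bookkeeping of the subset-removal condition under the induction.
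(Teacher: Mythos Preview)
The paper does not prove this theorem; it is stated as a standard fact about box splines with a citation to \cite[sect.~1.5]{PBP}. Your argument is the classical proof of this result and is correct: the recursion $\partial_v b_X = b_{X\setminus\{v\}}(\cdot + v/2) - b_{X\setminus\{v\}}(\cdot - v/2)$ together with the convolution representation of the base case is exactly how the regularity is established in the box spline literature. One small point worth making explicit in your inductive step: the sufficiency claim ``$\partial_w b \in C^{k-1}$ for all $w$ implies $b \in C^k$'' tacitly uses that $b$ is already continuous, which follows since the hypothesis at level $k$ implies the hypothesis at level $0$ (any $(|\Phi^+|-1)$-subset contains an $(|\Phi^+|-k-1)$-subset, which spans). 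With that observation recorded, the argument is complete.
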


Using Theorem \ref{thm:b-regularity}, we can determine the degree of differentiability of $\CJ$ for all of the classical series of simple Lie algebras.

\begin{example} \label{ex:J-regularity} \normalfont
When $\gog = \mathfrak{su}(n)$, the subset $\{ e_i - e_j \ | \ 1 \le i < j \le n-1 \} \subset \Phi^+$, obtained by removing the $n-1$ roots of the form $e_i - e_n$, does not span $\tot$.  On the other hand, a simple induction argument reveals that all subsets of $\Phi^+$ obtained by removing $n - 2$ roots do span $\tot$.  For $\mathfrak{su}(n)$ we thus find that $\CJ \in C^{n-3}(\tot)$ in agreement with the results obtained by Fourier analysis in \cite{CMZ, CZ1}, while we expect in general that $\CJ \not \in C^{n-2}(\tot)$.

When $\gog = \mathfrak{so}(2n)$, the subset $\{ e_i \pm e_j \ | \ 1 \le i < j \le n-1 \} \subset \Phi^+$, obtained by removing the $2n-2$ roots of the form $e_i \pm e_n$, does not span $\tot$, whereas all subsets of $\Phi^+$ obtained by removing $2n - 3$ roots do span $\tot$.  We thus find that $\CJ \in C^{2n-4}(\tot)$, while we expect in general that $\CJ \not \in C^{2n-3}(\tot)$.

For $\gog = \mathfrak{so}(2n+1)$, the subset $\{e_i, \ e_i \pm e_j \ | \ 1 \le i < j \le n-1 \} \subset \Phi^+$, obtained by removing $2n - 1$ roots (the short root $e_n$ and all roots of the form $e_i \pm e_n$), does not span $\tot$, whereas all subsets of $\Phi^+$ obtained by removing $2n - 2$ roots do span $\tot$.  We thus find that $\CJ \in C^{2n-3}(\tot)$, while in general $\CJ \not \in C^{2n-2}(\tot)$.  Again this result agrees with remarks in \cite[sect. 4.4]{CMZ} based on Fourier analysis.

Repeating the argument above but replacing the short roots $e_i$ of $B_n$ with the long roots $2 e_i$ of $C_n$, we find the same result for $\mathfrak{sp}(2n)$ as for $\mathfrak{so}(2n+1)$. \remdone
\end{example}

\subsection{Stretched multiplicities and the semiclassical limit}
\label{sec:semiclassical}
There are various ways in which $\CJ$ can be understood to provide a ``semiclassical approximation'' of the multiplicities $C_{\lambda \mu}^\nu$.  In the framework of geometric quantization \cite{GS}, one can realize $\CJ(\lambda, \mu; \nu)$ as the symplectic volume of the classical phase space of a certain Hamiltonian system, and $C_{\lambda \mu}^\nu$ as the dimension of the state space of a corresponding quantum-mechanical system.  The semiclassical approximation can also be understood in more combinatorial terms: Berenstein and Zelevinsky \cite{BZ} constructed polytopes $H_{\lambda \mu}^\nu$ such that $C_{\lambda \mu}^\nu$ is equal to the number of integer points in $H_{\lambda \mu}^\nu$, while in \cite{CMZ} it was shown that $\CJ(\lambda, \mu; \nu)$ equals the volume of $H_{\lambda \mu}^\nu$, leading to an asymptotic equality between $\CJ(\lambda, \mu; \nu)$ and $C_{\lambda \mu}^\nu$ as $\lambda, \mu, \nu$ grow large.  Here we show how the box spline convolution identity for $\CJ$ provides yet one more perspective on the semiclassical approximation.

Given a triple of highest weights $(\lambda, \mu, \nu)$ with $\lambda + \mu - \nu \in Q$, we can study the {\it stretched multiplicities} $C_{N \lambda \, N \mu}^{N \nu}$ for positive integers $N$.  Much is known about the stretched multiplicities: for example, since the inequalities defining the polytope $H_{\lambda \mu}^\nu$ are linear in $(\lambda, \mu, \nu)$, $C_{N \lambda \, N \mu}^{N \nu}$ is equal to the number of integer points in the dilated polytope $NH_{\lambda \mu}^\nu$. By standard results in the geometry of polytopes, it then follows that the stretched multiplicities for fixed $(\lambda, \mu, \nu)$ are given by a quasi-polynomial function of $N$, and that
\begin{equation} \label{eqn:C-J-sc}
\CJ(\lambda, \mu ; \nu) = \lim_{N \to \infty} \frac{1}{N^d} C_{N \lambda \, N \mu}^{N \nu}.
\end{equation}
This is what is usually meant by the statement that $\CJ$ is a ``semiclassical limit'' of tensor product multiplicities.  The main observation in this subsection is that the asymptotic relationship (\ref{eqn:C-J-sc}) can also be understood in terms of the following exact relationship between $\CJ$ and $C_{N \lambda \, N \mu}^{N \nu}$.

\begin{proposition}
\label{prop:J-stretched}
For $\lambda, \mu$ dominant weights of $\gog$,
\begin{multline}
\label{eqn:J-stretched}
\CJ(\lambda + \rho/N, \mu + \rho/N ; \gamma)  
\\ = \frac{1}{N^d} b(N \gamma) * \Bigg ( \sum_{\substack{\nu \in (\lambda + \mu) + N^{-1}Q \\ \ \ \cap \ \mathcal{C}_+}} C^{N\nu}_{N\lambda\, N\mu} \sum_{w \in W} \epsilon(w)\, \delta_{w(\nu + \rho/N)} \Bigg ),
\end{multline}
where the sum over $\nu$ runs over those elements of the scaled and translated root lattice $(\lambda + \mu) + N^{-1}Q$ that lie in the dominant Weyl chamber $\mathcal{C}_+$. 
\end{proposition}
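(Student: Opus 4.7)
The plan is to derive (\ref{eqn:J-stretched}) as a rescaled version of Proposition \ref{prop:J-BS} applied to the dominant weights $N\lambda$ and $N\mu$. First I would invoke (\ref{eqn:conv-conclusion}) with $(\lambda, \mu)$ replaced by $(N\lambda, N\mu)$, which yields
\[
\CJ(N\lambda + \rho, N\mu + \rho; \tilde\gamma) = b(\tilde\gamma) * \Bigg( \sum_{\substack{\tilde\nu \in N(\lambda+\mu) + Q \\ \ \ \cap \ \mathcal{C}_+}} C^{\tilde\nu}_{N\lambda\, N\mu} \sum_{w \in W} \epsilon(w)\, \delta_{w(\tilde\nu + \rho)} \Bigg).
\]
Next I would reindex the sum by $\tilde\nu = N\nu$, observing that $\tilde\nu \in N(\lambda+\mu) + Q$ precisely when $\nu \in (\lambda+\mu) + N^{-1}Q$, and that $\mathcal{C}_+$ is invariant under positive scaling so the dominance condition is preserved. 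Since each $w \in W$ is linear, $w(\tilde\nu + \rho) = w(N\nu + \rho) = Nw(\nu + \rho/N)$.

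I would then specialize the resulting identity to $\tilde\gamma = N\gamma$. On the left-hand side, $\CJ(N\lambda + \rho, N\mu + \rho; N\gamma) = N^d \CJ(\lambda + \rho/N, \mu + \rho/N; \gamma)$ by the homogeneity of $\CJ$ of degree $d = |\Phi^+| - r$ stated in Definition \ref{def:J-def}. On the right-hand side, writing the convolution explicitly as a finite sum of translates of $b$ and using the identity $b(N\gamma - Nw(\nu + \rho/N)) = b(N(\gamma - w(\nu + \rho/N)))$, I would recognize the resulting expression, as a function of $\gamma$, as the convolution of the rescaled density $b(N\,\cdot\,)$ with the finitely supported measure $\sum_\nu C^{N\nu}_{N\lambda\, N\mu} \sum_w \epsilon(w)\, \delta_{w(\nu + \rho/N)}$. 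Dividing both sides by $N^d$ then yields (\ref{eqn:J-stretched}) exactly.

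The argument is a purely algebraic rescaling, and there is no real conceptual obstacle. The only thing requiring care is the bookkeeping of $N$-factors in the interaction between convolution and dilation: replacing the kernel $b(\,\cdot\,)$ by $b(N\,\cdot\,)$ while simultaneously contracting the support points of the delta masses from $Nw(\nu + \rho/N)$ down to $w(\nu + \rho/N)$ must balance the $N^d$ produced on the left by the homogeneity of $\CJ$. A secondary point worth verifying is that $N\lambda$ and $N\mu$ are genuine dominant weights of $\gog$, so that Proposition \ref{prop:J-BS} applies; but this is immediate since the weight lattice is closed under multiplication by positive integers and the dominant chamber is a cone.
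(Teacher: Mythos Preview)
Your proposal is correct and follows essentially the same approach as the paper: apply Proposition \ref{prop:J-BS} to $(N\lambda, N\mu)$, reindex the sum via $\tilde\nu = N\nu$, substitute $\tilde\gamma = N\gamma$, and use the degree-$d$ homogeneity of $\CJ$ to extract the factor $N^{-d}$. The paper's proof is slightly terser about the convolution bookkeeping, but the argument is the same.
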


\begin{proof}
Proposition \ref{prop:J-BS} gives
\begin{align*}
\CJ(N\lambda + \rho, N \mu + \rho ; \gamma) \ &= \ b(\gamma) \, * \, \Bigg( \sum_{\substack{\nu \in (N\lambda + N\mu) + Q \\ \ \ \cap \ \mathcal{C}_+}} C^\nu_{N\lambda \, N\mu} \sum_{w \in W} \epsilon(w) \delta_{w(\nu + \rho)} \Bigg) \\
\ &= \ b(\gamma) \, * \, \Bigg ( \sum_{\substack{\nu \in (\lambda + \mu) + N^{-1}Q \\ \ \ \cap \ \mathcal{C}_+}}  C^{N\nu}_{N\lambda \, N\mu} \sum_{w \in W} \epsilon(w) \delta_{w(N\nu + \rho)} \Bigg ).
\end{align*}
Scaling $\gamma$ by $N$ on both sides, we obtain $$\CJ(N\lambda + \rho, N \mu + \rho ; N \gamma) = b(N \gamma) \, * \, \Bigg( \sum_{\substack{\nu \in (\lambda + \mu) + N^{-1}Q \\ \ \ \cap \ \mathcal{C}_+}} C^{N\nu}_{N\lambda \, N\mu} \sum_{w \in W} \epsilon(w) \delta_{w(\nu + \rho/N)} \Bigg).$$ The homogeneity of $\CJ$ then gives (\ref{eqn:J-stretched}).
\end{proof}

The following immediate corollary is a distributional version of the semiclassical limit (\ref{eqn:C-J-sc}).  Let $d\gamma$ denote Lebesgue measure on $\tot$ and let $\implies$ denote weak convergence of finite signed measures.

\begin{corollary}
\label{cor:J-semiclassical}
For $\lambda, \mu$ dominant weights of $\gog$,
\begin{equation}
\label{eqn:J-semiclassical}
\frac{1}{N^{|\Phi^+|}} \sum_{\substack{\nu \in (\lambda + \mu) + N^{-1}Q \\ \ \ \cap \ \mathcal{C}_+}}  C^{N\nu}_{N\lambda\, N\mu} \sum_{w \in W} \epsilon(w)\, \delta_{w(\nu)} \implies \CJ(\lambda, \mu ; \gamma)\, d\gamma.
\end{equation}
\end{corollary}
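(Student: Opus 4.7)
The strategy is to derive the corollary from the exact convolution identity of Proposition \ref{prop:J-stretched} by testing both sides against an arbitrary $f \in C_c^\infty(\tot)$, exploiting the fact that the kernel $N^{-d} b(N\gamma)$ is a scaled approximate identity. Specifically, set $\phi_N(\gamma) := N^r b(N\gamma)$, a symmetric probability density supported in a ball of radius $O(1/N)$, so that Proposition \ref{prop:J-stretched} reads
\begin{equation*}
\CJ(\lambda + \rho/N, \mu + \rho/N; \gamma) \ = \ N^{-|\Phi^+|} (\phi_N * \tilde\mu_N)(\gamma),
\end{equation*}
where $\tilde\mu_N := \sum_\nu C^{N\nu}_{N\lambda\, N\mu} \sum_w \epsilon(w)\, \delta_{w(\nu + \rho/N)}$ is the measure appearing on the right-hand side of Proposition \ref{prop:J-stretched}.

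Pairing both sides against $f$ and using Fubini together with the evenness of $\phi_N$ gives
\begin{equation*}
\int_\tot f(\gamma)\, \CJ(\lambda + \rho/N, \mu + \rho/N; \gamma)\, d\gamma \ = \ N^{-|\Phi^+|} \int_\tot (\phi_N * f)(\gamma)\, d\tilde\mu_N(\gamma).
\end{equation*}
The left-hand side converges to $\int f(\gamma)\, \CJ(\lambda, \mu; \gamma)\, d\gamma$ by dominated convergence, since $\CJ$ is piecewise polynomial in all three arguments, uniformly bounded on bounded sets, and depends continuously on its first two arguments away from a finite hyperplane arrangement.

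To identify the limit of the right-hand side with $\int f\, \CJ(\lambda, \mu; \gamma)\, d\gamma$ in the form $N^{-|\Phi^+|} \int f\, d\mu_N$, I would control two error terms: (i) $\|\phi_N * f - f\|_\infty = O(1/N)$ by the Lipschitz continuity of $f$ and the $O(1/N)$ support of $\phi_N$; and (ii) $|f(w(\nu + \rho/N)) - f(w(\nu))| = O(1/N)$ uniformly on the bounded support of $\mu_N$, which yields $|\int f\, d(\tilde\mu_N - \mu_N)| = O(1/N) \cdot \|\tilde\mu_N\|_{TV}$. Both errors, after multiplication by the $N^{-|\Phi^+|}$ prefactor, are of order $O(1/N) \cdot N^{-|\Phi^+|} \|\tilde\mu_N\|_{TV}$.

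The main obstacle is thus establishing $N^{-|\Phi^+|} \|\tilde\mu_N\|_{TV} = O(1)$. The support of $\mu_N$ lies in a compact subset of $\tot$ independent of $N$, since $C^{N\nu}_{N\lambda\, N\mu}$ vanishes for $\nu$ outside a fixed polytope, and the number of contributing lattice points is $O(N^r)$. The remaining ingredient is the bound $\sum_\nu C^{N\nu}_{N\lambda\, N\mu} = O(N^{|\Phi^+|})$, which I would extract from the identity $\dim V_{N\lambda} \cdot \dim V_{N\mu} = \sum_\nu C^{N\nu}_{N\lambda\, N\mu}\, \dim V_{N\nu}$ combined with the Weyl dimension formula, using that $\dim V_{N\nu}$ is bounded below by a positive constant times $N^{|\Phi^+|}$ for $\nu$ in a compact subset of the interior of $\mathcal{C}_+$, together with a separate $O(N^{r-1})$ estimate handling the lattice points near the walls. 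Once this bound is in hand, the conclusion of the corollary follows.
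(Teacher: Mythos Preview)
Your approach is essentially the paper's: both start from Proposition~\ref{prop:J-stretched}, rewrite $N^{-d} b(N\gamma)$ as $N^{-|\Phi^+|}$ times the approximate identity $\phi_N(\gamma) = N^r b(N\gamma)$, and pass to the limit. The paper's proof is a two-line sketch that simply asserts the conclusion once the approximate-identity structure is exhibited; it does not spell out the pairing with a test function, the transfer of the convolution onto $f$ via Fubini and the evenness of $b$, the $\rho/N$ shift in the Dirac masses, or the total-variation control that you correctly isolate as the crux of the argument. In that sense your write-up is a rigorous expansion of what the paper leaves implicit rather than a different proof.

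One comment on the TV bound, which is the only step you have not fully closed. Your proposed route through the dimension identity $\dim V_{N\lambda}\dim V_{N\mu} = \sum_\nu C^{N\nu}_{N\lambda\,N\mu}\dim V_{N\nu}$ and the Weyl formula is workable in principle, but the wall analysis is the delicate part and you have left it as a promissory note; for $\nu$ on a face of $\mathcal{C}_+$ the lower bound on $\dim V_{N\nu}$ degrades by more than one power of $N$, and the bookkeeping across strata is not entirely trivial. A shorter argument, already available from the material in Section~\ref{sec:semiclassical}, is to use the Berenstein--Zelevinsky description directly: $C^{N\nu}_{N\lambda\,N\mu}$ counts lattice points in the dilate $N H^{\nu}_{\lambda\,\mu}$, a polytope of dimension at most $d$ whose defining inequalities depend linearly on $(\lambda,\mu,\nu)$, hence with diameter bounded uniformly as $\nu$ ranges over the fixed compact support. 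This gives $C^{N\nu}_{N\lambda\,N\mu} = O(N^d)$ uniformly in $\nu$, and combined with the $O(N^r)$ contributing lattice points yields $N^{-|\Phi^+|}\|\tilde\mu_N\|_{TV} = O(1)$ with no separate treatment of the walls.
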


\begin{proof}
Letting $N \to \infty$ in (\ref{eqn:J-stretched}) and recalling that $d = |\Phi^+| - r$, we obtain
\begin{align*}
\CJ(\lambda, \mu; \gamma) \ &= \ \lim_{N \to \infty} \frac{1}{N^d} b(N \gamma) \, * \, \Bigg( \sum_{\substack{\nu \in (\lambda + \mu) + N^{-1}Q \\ \ \ \cap \ \mathcal{C}_+}}  C^{N\nu}_{N\lambda\, N\mu} \sum_{w \in W} \epsilon(w) \, \delta_{w(\nu + \rho/N)} \Bigg) \\
\ &= \ \lim_{N \to \infty} N^r b(N \gamma) \, * \, \Bigg( \frac{1}{N^{|\Phi^+|}} \sum_{\substack{\nu \in (\lambda + \mu) + N^{-1}Q \\ \ \ \cap \ \mathcal{C}_+}}  C^{N\nu}_{N\lambda\, N\mu} \sum_{w \in W} \epsilon(w) \, \delta_{w(\nu)} \Bigg)
\end{align*}
almost everywhere $d\gamma$.

Since $||b||_{L^1} = 1,$ the sequence of functions $N^r b(N \gamma)$ is an approximation to the identity as $N \to \infty$, which implies the desired result.
\end{proof}

\subsection{The Littlewood--Richardson polynomial}
\label{sec:polynomiality}
In this subsection we take $\gog = \mathfrak{su}(n)$.  For a compatible triple $(\lambda, \mu, \nu)$ of dominant weights of $\mathfrak{su}(n)$, the function $P_{\lambda \mu}^\nu(N) = C^{N\nu}_{N\lambda \, N\mu}$ is in fact a polynomial in $N$, called the {\it Littlewood--Richardson polynomial}, rather than merely a quasipolynomial.  This polynomiality property of the stretched Littlewood--Richardson coefficients was proven by Rassart \cite{Rass} using vector partition functions and separately by Derksen and Weyman \cite{DW} using semi-invariants of quivers.  Both \cite{DW} and \cite{Rass} mention a third, unpublished proof by Knutson using ideas from symplectic geometry.  The symplectic geometry proof follows from the ``quantization commutes with reduction'' theorem in geometric quantization \cite{Kn}; see \cite{GS, MS, VQR} for background on this theorem.  These proofs confirmed part of a conjecture by King, Tollu and Toumazet \cite{KTT}, who later studied the degrees and factorization properties of the Littlewood--Richardson polynomials in \cite{KTT2}.  Here we use box spline deconvolution to give yet another proof of the polynomiality of $P_{\lambda \mu}^\nu$.

We essentially follow Rassart's recipe with different ingredients.  In particular, like Rassart, we take as given the quasi-polynomiality of $P_{\lambda \mu}^\nu(N)$, which follows already from the construction of Berenstein--Zelevinsky polyopes.  In a sense therefore we only give a novel approach to the final step required to show polynomiality, since we do not offer a new proof of quasi-polynomiality.  Nonetheless, this proof illustrates a number of ideas that we will use again later, in particular the relationship between the domains of polynomiality of $C_{\lambda \mu}^\nu$ and those of $\CJ$, which will be important in the proof of Theorem \ref{thm:fin-dif-inversion}.

Before giving the proof, we first recall some definitions and briefly sketch Rassart's argument.  For our purposes, a {\it hyperplane arrangement} $\mathcal{HA}$ in a Euclidean space $V$ is a finite union of (affine) hyperplanes.  The {\it chamber complex} associated to $\mathcal{HA}$ is a partition of $V$, consisting of all connected components of $V \setminus \mathcal{HA}$ together with the common refinement of the hyperplanes.  Each set in this partition is called a {\it chamber}.  If $\mathcal{HA}$ is {\it centered}, meaning that all of the hyperplanes pass through the origin, then the chamber complex partitions $V$ into convex polyhedral cones.

By \cite{BZ} the multiplicity $C_{\lambda \mu}^\nu$ equals the number of integer points in the polytope $H_{\lambda \mu}^\nu$.  By standard results about vector partition functions (see e.g. \cite{BSt}), this implies that there is a minimal centered hyperplane arrangement $\mathcal{LR}_n \subset \tot^3$, called the {\it Littlewood--Richardson arrangement}, such that $C_{\lambda \mu}^\nu$ is a quasi-polynomial function on each chamber of the associated chamber complex.  In particular, $C_{\lambda \mu}^\nu$ is quasipolynomial on rays starting at the origin, so that $P_{\lambda \mu}^\nu(N)$ is a quasipolynomial function of $N$.

Rassart defines the {\it Steinberg arrangement} $\mathcal{SA}_n$ to be the union of all affine hyperplanes of the form
\begin{equation} \label{eqn:SA-hyperplanes}
\langle \sigma(\alpha + \rho) + \tau(\beta + \rho) - (\gamma + 2\rho), \theta(\omega_j) \rangle = 0, \qquad \alpha, \beta, \gamma \in \tot, 
\end{equation}
for $\sigma, \tau, \theta \in W = S_n$ (the symmetric group) and $\omega_j$ a fundamental weight.  By analyzing the Kostant--Steinberg formula \cite{St} for the multiplicities $C_{\lambda \mu}^\nu$ (see (\ref{eqn:KS}) below), he concludes that $C_{\lambda \mu}^\nu$ is in fact polynomial in $(\lambda, \mu, \nu)$ on each connected component of $\tot^3 \setminus \mathcal{SA}_n$.  He then shows that each cone in the chamber complex of $\mathcal{LR}_n$ contains an arbitrarily large ball lying in some component of $\tot^3 \setminus \mathcal{SA}_n$, implying that the quasipolynomial determining $C_{\lambda \mu}^\nu$ on each cone must in fact coincide with one of the polynomials determining $C_{\lambda \mu}^\nu$ on $\tot^3 \setminus \mathcal{SA}_n$.

The proof below is similar in structure to Rassart's argument but uses different techniques at each step. Proposition \ref{prop:inversion-reworked} plays the role of the Kostant--Steinberg formula, while a different hyperplane arrangement, which we call the $\rho$-shifted Duistermaat--Heckman arrangement, plays the role of $\mathcal{SA}_n$.

\begin{theorem}[Rassart, Derksen--Weyman] \label{thm:polynomiality}
For a fixed compatible triple $(\lambda, \mu, \nu)$ of dominant weights of $\mathfrak{su}(n)$, $P_{\lambda \mu}^\nu(N)$ is a polynomial in $N$ with degree at most $\frac{1}{2}(n-1)(n-2).$
\end{theorem}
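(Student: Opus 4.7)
The plan is to mirror Rassart's geometric strategy, replacing the Kostant--Steinberg formula with Corollary~\ref{cor:inversion} and the Steinberg arrangement $\mathcal{SA}_n$ with an arrangement tailored to the piecewise-polynomial structure of $\CJ$. I will take the quasi-polynomiality of $P_{\lambda\mu}^\nu(N)$ as given, since it follows from Ehrhart's theorem applied to the Berenstein--Zelevinsky polytope $H_{\lambda\mu}^\nu$, whose defining inequalities are linear in $(\lambda,\mu,\nu)$. The remaining tasks are to upgrade quasi-polynomiality to genuine polynomiality and to control the degree.

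First I would introduce the relevant hyperplane arrangement. Since $\CJ$ is piecewise polynomial on $\tot^3$, there is a centered hyperplane arrangement $\mathcal{DH}_n \subset \tot^3$---call it the Duistermaat--Heckman arrangement---such that $\CJ$ is polynomial on the interior of each of its chambers. Let $\mathcal{DH}_n^\rho$ denote the affine arrangement obtained by translating $\mathcal{DH}_n$ by $(-\rho, -\rho, -\rho)$, enlarged as needed to accommodate the infinitesimal shift $t\eta$ appearing in Corollary~\ref{cor:inversion}. On each connected component of $\tot^3 \setminus \mathcal{DH}_n^\rho$, the map
$$(\lambda,\mu,\nu) \ \longmapsto \ \lim_{t \to 0^+} \hat A(\Phi^+)\, \CJ(\lambda+\rho,\, \mu+\rho;\, \nu + \rho + t\eta)$$
agrees with a single polynomial in $(\lambda,\mu,\nu)$ of total degree at most $d = \frac{1}{2}(n-1)(n-2)$, thanks to the homogeneity of $\CJ$ of degree $d$ and the fact that the truncation of $\hat A(\Phi^+)$ at order $d$ does not raise total degree. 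By Corollary~\ref{cor:inversion}, $C_{\lambda\mu}^\nu$ agrees with this polynomial on each such component.

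The hard part will be the geometric comparison: I need to show that every cone in the chamber complex of $\mathcal{LR}_n$ contains arbitrarily large Euclidean balls lying entirely in a single connected component of $\tot^3 \setminus \mathcal{DH}_n^\rho$. Because $\mathcal{LR}_n$ is centered while $\mathcal{DH}_n^\rho$ is affine with bounded shift, this should follow by an asymptotic argument: the recession cone of each component of $\tot^3 \setminus \mathcal{DH}_n^\rho$ is a chamber of $\mathcal{DH}_n$, so along a generic ray in a top-dimensional cone of $\mathcal{LR}_n$ one can inflate balls of increasing radius without crossing any wall of $\mathcal{DH}_n^\rho$. The delicate step is to verify that top-dimensional cones of $\mathcal{LR}_n$ are not entirely contained in lower-dimensional strata of $\mathcal{DH}_n$; this should follow from the convolution identity~(\ref{eqn:conv-conclusion}) that ties the two arrangements together.

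Granting the geometric statement, the proof concludes as follows. On each cone $\mathcal{C}$ of $\mathcal{LR}_n$, the quasi-polynomial describing $C_{\lambda\mu}^\nu$ agrees with a single polynomial $p$ of degree at most $d$ on all lattice points in an arbitrarily large ball, hence on enough lattice points to cover every residue class modulo the period of the quasi-polynomial; this forces the quasi-polynomial to equal $p$ everywhere on $\mathcal{C}$. Specializing to the ray $\{N(\lambda,\mu,\nu) : N \in \N\}$ then yields $P_{\lambda\mu}^\nu(N) = p(N\lambda, N\mu, N\nu)$, a polynomial in $N$ of degree at most $\frac{1}{2}(n-1)(n-2)$.
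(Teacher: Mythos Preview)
Your proposal is correct and follows essentially the same strategy as the paper: replace Rassart's Kostant--Steinberg analysis by Corollary~\ref{cor:inversion}, introduce the $\rho$-shifted Duistermaat--Heckman arrangement $\mathcal{DH}_n^\rho$, and compare it with $\mathcal{LR}_n$ to upgrade quasi-polynomiality to polynomiality. The paper's execution of the geometric comparison is slightly simpler than yours: instead of looking for large balls via a recession-cone argument, it takes the common refinement of $\mathcal{LR}_n$ and $\mathcal{DH}_n^\rho$, notes that any cone $C$ of $\mathcal{LR}_n$ is thereby partitioned into finitely many convex pieces, and observes that since $C$ is unbounded at least one piece $R$ is unbounded with $\dim R = \dim C$; such $R$ contains enough lattice points to force the quasi-polynomial on $C$ to coincide with the polynomial coming from the ambient $\mathcal{DH}_n^\rho$-chamber. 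This bypasses your ``delicate step'' entirely---and that step is in any case not delicate for top-dimensional cones, since a full-dimensional cone cannot lie inside a hyperplane.
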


\begin{proof}
We identify $\mathfrak{su}(n)$ with the space of $n$-by-$n$ traceless anti-Hermitian matrices, and $\tot$ with the space of traceless diagonal matrices with imaginary entries.  By standard techniques from the theory of Duistermaat--Heckman measures~\cite{DH}, one can show (see e.g. \cite[sect. 2.1]{CMZ} for an explicit calculation) that the boundaries of the domains of polynomiality of $\CJ$ are contained in the hyperplanes
\begin{equation} \label{eqn:DH-arrangement}
\sum_{i \in I} \alpha_i + \sum_{j \in J} \beta_j - \sum_{k \in K} \gamma_k = 0, \qquad \alpha, \beta, \gamma \in \tot,
\end{equation}
where $I, J, K \subset \{1, \hdots, n\}$ with $|I| = |J| = |K|$, and $\alpha = i\, \mathrm{diag}(\alpha_1, \hdots, \alpha_n)$, etc. We call the hyperplane arrangement (\ref{eqn:DH-arrangement}) the {\it Duistermaat--Heckman arrangement} $\mathcal{DH}_n$.

In our chosen parametrization of $\tot$, we find $\rho = \frac{i}{2} \mathrm{diag}(n - 2j + 1)_{j = 1}^n$.  Applying the Weyl shift to the arguments in (\ref{eqn:DH-arrangement}) we find that as a function of $(\alpha, \beta, \gamma) \in \tot^3$, the regions of polynomiality of $\CJ(\alpha', \beta' ; \gamma')$ are cut out by the {\it $\rho$-shifted Duistermaat--Heckman arrangement} $\mathcal{DH}_n^\rho$ consisting of the hyperplanes
\begin{equation} \label{eqn:shifted-DH-arrangement}
\sum_{i \in I} (\alpha_i - i) + \sum_{j \in J} (\beta_j - j) - \sum_{k \in K} (\gamma_k - k) =  - \frac{1}{2}(n+1) |I|,
\end{equation}
where again $|I| = |J| = |K|$.  The arrangement $\mathcal{DH}_n^\rho$ is not centered, but its hyperplanes do all pass through the point $-(\rho, \rho, \rho)$, so that its chamber complex consists of convex polyhedral cones with this point as their common apex.

Define $P(\alpha, \beta, \gamma) := \lim_{t \to 0^+} \hat A(\Phi^+) \CJ(\alpha', \beta' ; \gamma' + t\eta)$, with $\eta$ defined as in Theorem \ref{thm:bs-deconv}.  By Corollary \ref{cor:inversion}, for $(\lambda, \mu, \nu)$ a compatible triple of dominant weights, we have $C_{\lambda \mu}^\nu = P(\lambda, \mu, \nu)$.  For $(\alpha, \beta, \gamma)$ not lying on one of the hyperplanes (\ref{eqn:shifted-DH-arrangement}), $P(\alpha, \beta, \gamma)$ can be represented as a finite linear combination of $\CJ(\alpha', \beta' ; \gamma')$ and its derivatives.  If $(\alpha, \beta, \gamma)$ does lie on one of the hyperplanes (\ref{eqn:shifted-DH-arrangement}), then taking the limit in (\ref{eqn:inversion-reworked}) we find that $P(\alpha, \beta, \gamma)$ can be represented as a finite linear combination of $\phi$ and its derivatives, where $\phi(\alpha, \beta, \gamma)$ is the local polynomial expression of $\CJ(\alpha', \beta' ; \gamma')$ on one of the adjacent connected components of $\tot^3 \setminus \mathcal{DH}_n^\rho$, as a function of the {\it unshifted} triple $(\alpha, \beta, \gamma)$.

On every connected component of $\tot^3 \setminus \mathcal{DH}_n^\rho$, either $\CJ(\alpha', \beta' ; \gamma')$ vanishes or it is a polynomial of degree $|\Phi^+| - (n-1) = \frac{1}{2}(n-1)(n-2)$ in the variables $(\alpha, \beta, \gamma)$.  Moreover, by (\ref{eqn:Ahat-rewritten}) the zeroth-order term of $\hat A(\Phi^+)$ is 1.  Thus we find that on every chamber of $\mathcal{DH}_n^\rho$, $P(\alpha, \beta, \gamma)$ is a polynomial of degree at most $\frac{1}{2}(n-1)(n-2)$, and its degree is exactly $\frac{1}{2}(n-1)(n-2)$ except on chambers where $\CJ(\alpha', \beta'; \gamma')$ vanishes or which lie on the boundary of a chamber where $\CJ(\alpha', \beta'; \gamma')$ vanishes.

We have now shown the existence of a locally polynomial expression for $C_{\lambda \mu}^\nu$.  However, we cannot yet conclude the polynomiality of $P_{\lambda \mu}^\nu(N)$, because the arrangement $\mathcal{DH}_n^\rho$ is not centered, so that a ray starting at the origin may pass through multiple polynomial domains.  To finish the proof, we compare $\mathcal{DH}_n^\rho$ to the centered arrangement $\mathcal{LR}_n$, following a similar geometric intuition to \cite[Theorem 4.1]{Rass} but using a different argument.

Consider the union of hyperplane arrangements $\mathcal{LR}_n \cup \mathcal{DH}_n^\rho$.  The chamber complex of this arrangement is the common refinement of the chamber complexes of $\mathcal{LR}_n$ and $\mathcal{DH}_n^\rho$.  Let $C$ be any cone of the chamber complex of $\mathcal{LR}_n$.  Then $C$ is partitioned into finitely many convex polyhedral chambers of $\mathcal{LR}_n \cup \mathcal{DH}_n^\rho$, each of which is contained in a single chamber of $\mathcal{DH}_n^\rho$.  There must be at least one chamber $R$ of this partition that is unbounded with $\dim R = \dim C$.  This region $R$ lies in a single polynomial domain of the locally polynomial function $P$ defined above.  Thus we know that $C_{\lambda \mu}^\nu$ is expressed as a quasi-polynomial $q(\lambda, \mu, \nu)$ on $C$ and as a polynomial $P(\lambda, \mu, \nu)$ on $R$, so that at all lattice points $(\lambda, \mu, \nu)$ in $R$, $q(\lambda, \mu, \nu)$ and $P(\lambda, \mu, \nu)$ must agree.  It follows that on the entire cone $C$, $q$ is in fact a polynomial and equals the polynomial that expresses $P$ locally on $R$.

Finally, fix a compatible triple $(\lambda, \mu, \nu)$ of dominant weights.  The points $(N \lambda, N \mu, N \nu)$ lie on a ray starting at the origin and therefore lie in a single cone of $\mathcal{LR}_n$, so that $P_{\lambda \mu}^\nu(N) = q(N \lambda, N \mu, N \nu)$ for some polynomial $q$ as above.  Thus $P_{\lambda \mu}^\nu$ is a polynomial in $N$ with $\deg P_{\lambda \mu}^\nu \le \deg q \le \frac{1}{2}(n-1)(n-2)$, as desired.
\end{proof}
\section{Discrete convolution and deconvolution}
\label{sec:discrete}

In this section we consider a discrete analogue of Proposition \ref{prop:J-BS}, which yields several new results. First we use this observation to develop two different methods for computing the multiplicities $C_{\lambda \mu}^\nu$ from finitely many values of $\CJ$, which work irrespective of unimodularity.  In the process, we make some novel observations on the $R$-polynomial of $\gog$, a function introduced in \cite{CZ1} and further studied in \cite{Coq2, CMZ, ER}, which turns out to be closely related to the box spline $B_c[\Phi^+]$. Next we derive some combinatorial identities involving the box spline density, which we use to relate Proposition \ref{prop:J-BS} to the $\CJ$-LR relation (\ref{eqn:JLR1}) and to express the discrete convolution with $b$ on the root lattice in terms of a finite difference operator called the box spline Laplacian.  Finally we prove that for $\gog = \mathfrak{su}(n)$, ``typical'' Littlewood--Richardson coefficients can be computed as an explicit finite linear combination of values of $\CJ$.

Our starting point is the observation that if we restrict attention to $\gamma = \nu'$ with $\nu \in \lambda + \mu + Q$, then Proposition \ref{prop:J-BS} says the following:

\begin{corollary} \label{cor:J-disc-conv}
\begin{equation} \label{eqn:J-disc-conv}
\sum_{\nu \in \lambda + \mu + Q} \CJ(\lambda', \mu' ; \nu') \, \delta_{\nu'} =  \Bigg (\sum_{\tau \in Q} b(\tau) \, \delta_\tau \Bigg) * \sum_{\substack{\tau \in \lambda + \mu + Q \\ \ \ \cap \ \mathcal{C}_+}} C_{\lambda \mu}^\tau \sum_{w \in W} \epsilon(w)\, \delta_{w(\tau')}.
\end{equation}
\end{corollary}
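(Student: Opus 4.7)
The plan is to deduce this corollary directly from Proposition \ref{prop:J-BS} by restricting the identity there to the set of lattice points $\nu' = \nu + \rho$ with $\nu \in \lambda + \mu + Q$. Writing out the convolution on the right-hand side of Proposition \ref{prop:J-BS} as a function of $\gamma$ and then evaluating at $\gamma = \nu'$ gives
\begin{equation*}
\CJ(\lambda',\mu';\nu') = \sum_{\substack{\tau \in \lambda+\mu+Q \\ \cap\, \mathcal{C}_+}} \sum_{w \in W} C_{\lambda\mu}^\tau\, \epsilon(w)\, b\bigl(\nu' - w(\tau')\bigr).
\end{equation*}

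The key structural observation is that every shift $\nu' - w(\tau')$ appearing here lies in the root lattice $Q$. Indeed, writing $\nu' - w(\tau') = (\nu - w(\tau)) + (\rho - w(\rho))$, the second summand is in $Q$ by the standard fact that $\rho - w(\rho)$ equals the sum of positive roots sent to negative roots by $w^{-1}$; the first summand is in $Q$ because $\nu - \tau \in Q$ by compatibility of $\tau$ (which automatically lies in $\lambda+\mu+Q$) and $\tau - w(\tau) \in Q$ for any weight $\tau$. Consequently, $b$ is only sampled at points of $Q$ in the displayed formula above, so every term on the right is precisely the coefficient of $\delta_{\nu'}$ in the convolution of the two discrete measures $\sum_{\sigma \in Q} b(\sigma)\,\delta_\sigma$ and $\sum_{\tau, w} C_{\lambda\mu}^\tau \epsilon(w) \delta_{w(\tau')}$ (regarded as lattice-supported measures on $\tot$). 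Conversely, that same discrete convolution is supported in $\lambda + \mu + \rho + Q$, for the same lattice-theoretic reason, so no coefficient mass appears outside the set $\{\nu' : \nu \in \lambda+\mu+Q\}$. Summing the pointwise identity over this set yields the claimed equality of measures.

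The main thing to check carefully is just the lattice observation $\nu' - w(\tau') \in Q$; the rest is bookkeeping between the continuous convolution of Proposition \ref{prop:J-BS} and its discretization. One minor point warranting attention is the potential discontinuity of $b$ on the boundary of its support when $\gog$ has $\mathfrak{su}(2)$ summands (see Remark \ref{rem:J-cont}), but since such boundary loci do not generically intersect $Q$, the evaluations $b(\tau)$ for $\tau \in Q$ are unambiguous using the locally continuous version of $b$ as already agreed in that remark. Thus the corollary really is a direct combinatorial repackaging of Proposition \ref{prop:J-BS}, which is what makes it the natural entry point to the discrete deconvolution developed in the rest of the section.
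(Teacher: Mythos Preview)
Your proof is correct and follows essentially the same approach as the paper: evaluate Proposition~\ref{prop:J-BS} at $\gamma = \nu'$ for $\nu \in \lambda+\mu+Q$, observe that the differences $\nu' - w(\tau')$ land in $Q$, and identify the resulting sum with the discrete convolution. You are in fact more explicit than the paper in justifying the lattice inclusion $\nu' - w(\tau') \in Q$ and in noting that the discrete convolution is supported exactly on $\lambda+\mu+\rho+Q$, and your remark about the $\mathfrak{su}(2)$ boundary discontinuities is a useful aside that the paper's proof omits.
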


\begin{proof} All measures appearing in (\ref{eqn:J-disc-conv}) are supported on the weight lattice $P \subset \tot$. The convolution of two arbitrary measures on $P$ may be written
\begin{equation} \label{eqn:disc-conv-formula} 
\Bigg( \sum_{\tau \in P} f_\tau \delta_\tau \Bigg) * \Bigg( \sum_{\tau \in P} g_\tau \delta_\tau \Bigg) = \sum_{\tau \in P} \sum_{\nu \in P} f_\nu g_{\tau - \nu} \, \delta_\tau,
\end{equation}
where $\{ f_\tau, g_\tau \}_{\tau \in P}$ are some coefficients.

From Proposition \ref{prop:J-BS} we have
\begin{equation} \label{eqn:J-conv-basic} \CJ(\lambda', \mu'; \nu') = \sum_{\substack{\tau \in \lambda + \mu + Q \\ \ \ \cap \ \mathcal{C}_+}} C_{\lambda \mu}^\tau \sum_{w \in W} \epsilon(w) b(\nu' - w(\tau')),\end{equation}
so that $$ \sum_{\nu \in \lambda + \mu + Q} \CJ(\lambda', \mu'; \nu') \, \delta_{\nu'} = \sum_{\nu \in \lambda + \mu + Q \ } \sum_{\substack{\tau \in \lambda + \mu + Q \\ \ \ \cap \ \mathcal{C}_+}} C_{\lambda \mu}^\tau \sum_{w \in W} \epsilon(w) b(\nu' - w(\tau')) \, \delta_{\nu'}.$$  Comparing the right-hand side above to (\ref{eqn:disc-conv-formula}) and observing that $\nu' - w(\tau')$ runs over $Q$ for each fixed $\nu$, we conclude (\ref{eqn:J-disc-conv}).
\end{proof}

Corollary \ref{cor:J-disc-conv} has the same form as Proposition \ref{prop:J-BS} but involves measures that we can think of as discrete approximations of $\CJ$ and $b$.  In section \ref{sec:b-idents} we show that this statement is actually equivalent to the $\CJ$-LR relation ({\ref{eqn:JLR1}). One consequence of Corollary \ref{cor:J-disc-conv}, which we show in the next subsection, is that tensor product multiplicities can be computed from the volume function in all cases, whether or not $\Phi^+$ is unimodular.  Moreover, for a given triple $(\lambda, \mu, \nu)$, only finitely many values of $\CJ(\lambda', \mu' ; \gamma)$ are required, and it is not necessary to compute derivatives of $\CJ$ as in Corollary \ref{cor:inversion}.

\subsection{Finite deconvolution on a lattice}
\label{sec:lattice-deconv}

In this subsection we give two methods for computing $C_{\lambda \mu}^\nu$ from $\CJ$: a constructive algorithm for computing the multiplicities algebraically, and a method based on Fourier analysis leading to an integral representation of the multiplicities.  These two methods are complementary, since the integral representation is easy to write down but may be difficult to evaluate analytically, while the algebraic procedure is straightforward to execute computationally but does not lead a priori to a clean formula.  We also make some new observations on the vanishing locus of the $R$-polynomial of $\gog$ studied in \cite{Coq2, CMZ, CZ1, ER}, whose definition we recall in Definition \ref{def:R-poly} below.

We start with the algebraic approach.  The key insight is that no information is lost by considering only the discrete measure in (\ref{eqn:disc-conv-formula}) rather than the full function $\CJ$.  This is immediate from the following lemma, which provides a general deconvolution algorithm for finitely supported functions on a lattice.  The result below must be well known, but the precise statement that we need was difficult to find in the literature, so we prove it here.

\begin{lemma} \label{lem:disc-deconv}
Let $\Lambda$ be a lattice, let $f: \Lambda \to \C$ and $g: \Lambda \to \C$ be finitely supported functions, and suppose that $f$ is not uniformly zero.  Let $h := f * g$ be their convolution, i.e.,
\begin{equation} \label{eqn:disc-conv-def}
h(\lambda) := \sum_{\mu \in \Lambda} f(\mu)g(\lambda - \mu), \qquad \lambda \in \Lambda.
\end{equation}
Then $g$ can be computed from $f$ and $h$ using finitely many arithmetic operations.
\end{lemma}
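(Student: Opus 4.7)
My plan is to recover $g$ from $f$ and $h = f * g$ by peeling off the support of $g$ one lattice point at a time, using a total order on $\Lambda$ that is compatible with the group structure. Fix a $\Z$-basis to identify $\Lambda \cong \Z^r$, and equip $\Z^r$ with the lexicographic order $\le$. The only feature of this order that matters is that it is a total order satisfying $u \le v \Rightarrow u + w \le v + w$ for all $u, v, w \in \Lambda$.

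For any nonzero finitely supported $\phi: \Lambda \to \C$, let $m(\phi) \in \Lambda$ denote the $\le$-minimum of $\mathrm{supp}(\phi)$. The key observation is that whenever $\phi, \psi$ are both nonzero and finitely supported, compatibility of $\le$ with addition forces $m(\phi * \psi) = m(\phi) + m(\psi)$, and the value of $\phi * \psi$ at this point is $\phi(m(\phi)) \psi(m(\psi)) \ne 0$. Applied to the relation $h = f * g$ with $g$ nonzero, this yields
\begin{equation*}
m(g) = m(h) - m(f), \qquad g(m(g)) = \frac{h(m(h))}{f(m(f))},
\end{equation*}
determining a single point of $\mathrm{supp}(g)$ and the value of $g$ there from $f$ and $h$ alone.

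The algorithm then proceeds by iteration: if $h = 0$, output $g = 0$; otherwise compute $(m(g), g(m(g)))$ as above, replace $h$ by $h - g(m(g)) \cdot (f * \delta_{m(g)})$, and recurse to determine the remainder of $g$. Each step uses only finitely many arithmetic operations, since $f$ and the running $h$ have finite support throughout and the update modifies $h$ at only finitely many points.

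The main subtlety — and the step I would flag as requiring the most care — is termination. By construction the updated $h$ satisfies $h = f * \tilde g$, where $\tilde g := g - g(m(g)) \delta_{m(g)}$ has support of strictly smaller cardinality than $g$. Hence the procedure terminates after at most $|\mathrm{supp}(g)|$ iterations, at which point the running $h$ is identically zero and the algorithm halts; crucially, this condition is detectable from $h$ alone, so no prior knowledge of $|\mathrm{supp}(g)|$ is required. The entire argument ultimately rests on the existence of a total order on $\Lambda$ compatible with addition, which is what makes the peeling step well-defined; a naive order based on, say, Euclidean norm would not yield the key identity $m(\phi * \psi) = m(\phi) + m(\psi)$.
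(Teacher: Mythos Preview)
Your proof is correct and follows the same recursive peeling strategy as the paper: locate an extremal point of $\mathrm{supp}(h)$ at which the convolution collapses to a single product, read off one value of $g$, subtract, and repeat until $h$ vanishes. The difference lies only in how the extremal point is chosen. The paper picks a vertex of the convex hull $\mathrm{Conv}(h)$ and isolates it via a separating linear functional, so each iteration is phrased as a small linear programming problem; you instead fix once and for all the lexicographic order on $\Lambda \cong \Z^r$ and always take the minimum. Your version is a bit more elementary---no convex geometry or linear programming enters, and the identity $m(\phi * \psi) = m(\phi) + m(\psi)$ replaces the Minkowski-sum argument---while the paper's version is basis-free and makes the geometry of the supports explicit. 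Both terminate for the same reason (each step strictly shrinks $|\mathrm{supp}(g)|$) and have the same overall structure.
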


\begin{proof}
The algorithm follows a recursive procedure.  At each iteration we solve a linear programming problem that allows us to determine one additional nonzero value of $g$.  When all nonzero values of $g$ have been computed, the procedure terminates.

First observe that $||h||_{\ell^1} = ||f||_{\ell^1} ||g||_{\ell^1}$.  Therefore if $h$ is uniformly zero then $g$ must be uniformly zero, and we are done.  Suppose therefore that $h$ is not uniformly zero.

Let $V := \Lambda \otimes \R$ be the real span of $\Lambda$.  Given a function $\varphi: \Lambda \to \C$, let $\mathrm{supp}(\varphi) \subset \Lambda$ denote its support and $\mathrm{Conv}(\varphi) \subset V$ denote the convex hull of $\mathrm{supp}(\varphi)$.  Since $\mathrm{supp}(h)$ is finite and non-empty, $\mathrm{Conv}(h)$ has at least one vertex.  Choose such a vertex $\tau$; we can then find a linear functional $\ell \in V^*$ such that $\tau$ maximizes $\ell$ uniquely over $\mathrm{Conv}(h)$.  From this unique maximization property and the fact that $\mathrm{supp}(h)$ is contained in the Minkowski sum $\mathrm{supp}(f) + \mathrm{supp}(g)$, it follows that there is a unique way of writing $\tau = \tau_1 + \tau_2$ with $\tau_1 \in \mathrm{supp}(f)$ and $\tau_2 \in \mathrm{supp}(g)$.  Moreover, $\tau_1$ and $\tau_2$ are the unique maximizers of $\ell$ over $\mathrm{Conv}(f)$ and $\mathrm{Conv}(g)$ respectively, so we can compute $\tau_1$ by linear programming and $\tau_2$ as $\tau - \tau_1$.

By the definition (\ref{eqn:disc-conv-def}), we have $$h(\tau) = \sum_{\mu \in \Lambda} f(\mu)g(\tau - \mu) = f(\tau_1)g(\tau_2) + \sum_{\mu \ne \tau_1} f(\mu)g(\tau - \mu).$$  But we have just argued that $\tau_1$ is the unique $\mu \in \mathrm{supp}(f)$ such that $\tau - \mu \in \mathrm{supp}(g)$, so the last sum above vanishes, giving $h(\tau) = f(\tau_1)g(\tau_2)$.  We conclude that $g(\tau_2) = h(\tau) / f(\tau_1)$.

Having determined $g(\tau_2)$, we can remove $\tau_2$ from $\mathrm{supp}(g)$ and repeat the procedure.  That is, we apply the same algorithm to the function $$h'(\lambda) := h(\lambda) - g(\tau_2)f(\lambda - \tau_2) = (f * g')(\lambda),$$ where $g'(\lambda) = g(\lambda)$ for $\lambda \ne \tau_2$ and $g'(\tau_2) = 0$.  Since $\mathrm{supp}(g)$ was assumed finite, after $|\mathrm{supp}(g)|$ iterations we obtain the zero function, at which point we know that all values of $g$ have been determined and the algorithm terminates.
\end{proof}

We can identify each $f \in \ell^1(P)$ with the complex measure $\sum_{\tau \in P} f(\tau) \, \delta_\tau$, so that convolution of measures as in (\ref{eqn:disc-conv-formula}) is equivalent to convolution of functions as in (\ref{eqn:disc-conv-def}). Recognizing that Corollary \ref{cor:J-disc-conv} then expresses the restriction of $\CJ(\lambda', \mu'; \gamma)$ to $\lambda + \mu + \rho + Q$ as a convolution of two finitely supported functions on $P$, we see that we have shown the following.

\begin{theorem} \label{thm:C-from-J-algo}
For any $\gog$ and any dominant weights $\lambda, \mu, \nu$, the multiplicity $C_{\lambda \mu}^\nu$ can be computed constructively as an arithmetic expression in finitely many values of $\CJ(\lambda', \mu' ; \gamma)$ and $b(\gamma)$.
\end{theorem}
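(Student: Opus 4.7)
The plan is to derive Theorem \ref{thm:C-from-J-algo} as a direct application of the deconvolution algorithm of Lemma \ref{lem:disc-deconv} to the discrete convolution identity of Corollary \ref{cor:J-disc-conv}. All of the representation-theoretic content has already been packaged into (\ref{eqn:J-disc-conv}); what remains is to check that both sides of that identity can be read as finitely supported functions on a common lattice, after which the lemma does the work.

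First I would translate (\ref{eqn:J-disc-conv}) by $-(\lambda+\mu+\rho)$ so that the three relevant measures are supported on the root lattice $\Lambda = Q \subset P$. Concretely, set $f(\tau) := b(\tau)$ for $\tau \in Q$; let $g$ be the function on $Q$ whose value at the translate of $w(\nu')$ by $-(\lambda+\mu+\rho)$ equals $\epsilon(w) C_{\lambda\mu}^\nu$, as $\nu$ ranges over dominant weights in $\lambda+\mu+Q$ and $w$ over $W$; and let $h(\tau) := \CJ(\lambda', \mu'; \tau+\lambda+\mu+\rho)$. These values indeed land in $Q$ because $w(\xi)-\xi \in Q$ for every $\xi \in P$ and $w \in W$. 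The hypotheses of Lemma \ref{lem:disc-deconv} are then straightforward to verify: the density $b$ is supported on the (bounded) convex hull of $W \cdot \rho$, so $f = b|_Q$ is finitely supported, and $f$ is not identically zero since $b(0) > 0$ (the origin lies in the interior of that hull). The function $g$ is finitely supported because $V_\lambda \otimes V_\mu$ is finite-dimensional, so only finitely many dominant $\nu$ yield $C_{\lambda\mu}^\nu \neq 0$, and $|W| < \infty$; consequently $h = f*g$ is finitely supported as well.

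Applying Lemma \ref{lem:disc-deconv} then recovers $g$ from $f$ and $h$ using only finitely many arithmetic operations on finitely many values of $b$ and of $\CJ(\lambda',\mu';\cdot)$. To extract the individual multiplicities I would observe that each $\nu'$ of interest is strictly dominant (since $\nu$ is dominant and $\rho$ lies in the open Weyl chamber), so $w \mapsto w(\nu')$ is injective on $W$ and the $W$-orbits of distinct strictly dominant weights are disjoint; hence $g$ takes the value $C_{\lambda\mu}^\nu$ exactly at $\nu' - (\lambda+\mu+\rho)$ with no interference between different multiplicities. The main obstacle, if it deserves the name, is purely combinatorial bookkeeping: tracking that the supports of all three functions land in $Q$ after translation, and confirming that the output of the deconvolution algorithm cleanly isolates each $C_{\lambda\mu}^\nu$. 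No new analytic or representation-theoretic input beyond Proposition \ref{prop:J-BS} and Lemma \ref{lem:disc-deconv} is required.
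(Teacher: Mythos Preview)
Your proposal is correct and follows essentially the same route as the paper: recognize that Corollary \ref{cor:J-disc-conv} exhibits the restriction of $\CJ(\lambda',\mu';\cdot)$ to the shifted lattice as a discrete convolution of two finitely supported functions, then invoke Lemma \ref{lem:disc-deconv}. You have simply made explicit the bookkeeping (translation to $Q$, verification that $b|_Q$ is finitely supported and nonzero, and extraction of each $C_{\lambda\mu}^\nu$ from the recovered function) that the paper leaves implicit.
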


Although Lemma \ref{lem:disc-deconv} does provide a constructive method for computing $C_{\lambda \mu}^\nu$ in terms of $\CJ$, the procedure may be quite lengthy, and the form of the expression thus obtained may depend on $\lambda$ and $\mu$ in a complicated way.  It will not necessarily be the case that the algorithm above leads to a compact or transparent identity relating the volume function to the multiplicities.  However, for many triples of highest weights of $\mathfrak{su}(n)$, a simple expression does indeed exist for $C_{\lambda \mu}^\nu$ as a linear combination of values of $\CJ$.  We return to this topic in section \ref{sec:fd-inversion} below.

It should also be noted that Theorem \ref{thm:C-from-J-algo} depends crucially on the fact that $\CJ(\lambda', \mu' ; \gamma)$ is compactly supported.  On the other hand, even though Theorem \ref{thm:bs-deconv} and its analogue for the non-unimodular case in \cite{DV} require more information in a sense (derivatives of $\CJ$, as well as additional piecewise polynomial functions when $\Phi^+$ is not unimodular), they also apply in a more general setting, as they do not assume compact support for the discrete distribution appearing in the convolution with $b$.

Next we give the Fourier-analytic approach.  We will derive an inversion formula involving a function called the {\it $R$-polynomial} of $\gog$, introduced in \cite{CZ1} and further studied in \cite{Coq2, CMZ, ER}.
\begin{definition} \label{def:R-poly} \normalfont
The {\it $R$-polynomial} of $\gog$ is the function
\begin{equation} \label{eqn:R-poly-def}
R(x) := \sum_{\eta \in 2\pi P^\vee} j_\gog^{1/2}(x + \eta), \qquad x \in \tot,
\end{equation}
where $P^\vee \subset \tot$ is the coweight lattice and $j_\gog^{1/2}$ is defined by (\ref{eqn:BS-FT}).
\remdone
\end{definition}

The $R$-polynomial can be regarded either as a trigonometric polynomial on $\tot$ or as a genuine polynomial on the torus $\exp(\tot) \cong \tot / 2\pi P^\vee$. As a trigonometric polynomial on $\tot$, its coefficients are given by the values of $b$ at points of the root lattice:
\begin{equation} \label{eqn:R-from-b}
R(x) = \sum_{\tau \in Q} b(\tau) \cos(\langle \tau, x \rangle), \qquad x \in \tot.
\end{equation}
This follows immediately from the definition (\ref{eqn:R-poly-def}). Since $P^\vee$ is the dual lattice of $Q$ and $j_\gog^{1/2} = \mathscr{F}[b]$, the Poisson summation formula gives
\begin{equation} \label{eqn:b-R-FT}
\sum_{\eta \in 2\pi P^\vee} j_\gog^{1/2}(x + \eta) = \sum_{\tau \in Q} b(\tau) e^{i \langle \tau, x \rangle},
\end{equation}
and (\ref{eqn:R-from-b}) then follows from the symmetry $b(\tau) = b(-\tau)$.  Note that the sum in (\ref{eqn:R-from-b}) is actually finite, since $b$ is compactly supported.

Another expression for the $R$-polynomial was proved by Etingof and Rains in \cite{ER}:
\begin{equation} \label{eqn:R-ER}
R(x) = \sum_{\kappa \in K} r_\kappa \chi_\kappa(e^x),
\end{equation}
where $K \subset Q$ consists of all dominant elements of the root lattice that lie on the interior of the convex hull of the Weyl orbit of $\rho$, and $r_\kappa := \CJ(\rho, \rho, \kappa')$.

Recall that the dual of the weight lattice $P$ is the coroot lattice $Q^\vee$.  Let $|Q^\vee|$ denote the volume of a fundamental domain of $Q^\vee$.  We have the following integral representation for tensor product multiplicities:
\begin{theorem} \label{thm:C-from-J-Fourier}
For any dominant weights $\lambda, \mu, \nu$ of $\gog$,
\begin{equation} \label{eqn:C-from-J-Fourier}
C_{\lambda \mu}^\nu = \frac{1}{(2\pi)^r|Q^\vee|} \int_{\tot / 2\pi Q^\vee} \frac{1}{R(x)} \sum_{\tau \in \lambda + \mu + Q} \CJ(\lambda', \mu' ; \tau')\, e^{i \langle \tau - \nu, x \rangle} \, dx.
\end{equation}
\end{theorem}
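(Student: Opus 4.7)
The plan is to take the discrete convolution identity of Corollary \ref{cor:J-disc-conv} and Fourier-transform both sides on the weight lattice $P$. Because $P$ is Pontryagin dual to the torus $T := \tot/2\pi Q^\vee$, this converts the convolution into a product of trigonometric polynomials on $T$. The Fourier transform of the discrete box spline measure $\sum_{\tau \in Q} b(\tau)\,\delta_\tau$ is precisely $R(x)$, by (\ref{eqn:R-from-b}) together with the parity $b(-\tau)=b(\tau)$. Dividing by $R$ and applying Pontryagin orthogonality on $T$ then extracts $C_{\lambda\mu}^\nu$.

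Concretely, Fourier-transforming Corollary \ref{cor:J-disc-conv} would produce
\begin{equation*}
\sum_{\tau \in \lambda + \mu + Q} \CJ(\lambda', \mu' ; \tau') \, e^{i \langle \tau', x \rangle} \;=\; R(x) \sum_{\sigma} \sum_{w \in W} \epsilon(w)\, C_{\lambda \mu}^\sigma \, e^{i \langle w(\sigma'), x \rangle},
\end{equation*}
where $\sigma$ runs over dominant weights in $\lambda+\mu+Q$. Dividing by $R(x)$, multiplying by $(2\pi)^{-r}|Q^\vee|^{-1}\, e^{-i\langle \nu', x \rangle}$, and integrating over $T$, the orthogonality relation $\frac{1}{(2\pi)^r|Q^\vee|}\int_T e^{i \langle \eta, x \rangle}\,dx = \delta_{\eta, 0}$ for $\eta \in P$ collapses the right-hand side to a single surviving term. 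Since $\sigma'$ and $\nu'$ are both strictly dominant, the condition $w(\sigma') = \nu'$ forces $w = 1$ and $\sigma = \nu$, leaving only $C_{\lambda\mu}^\nu$. (If $\nu$ is dominant but $\nu \notin \lambda + \mu + Q$, then no term survives and both sides vanish, consistent with $C_{\lambda\mu}^\nu = 0$.) Finally, the identity $\tau' - \nu' = \tau - \nu$ converts the exponent to the form stated in the theorem.

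The main obstacle is justifying the division by $R(x)$, which can vanish on a proper subset of $T$. This is resolved by the displayed identity above: it shows that the trigonometric polynomial on the left-hand side vanishes to at least the order of $R$ at every point of $T$, so the quotient $R(x)^{-1}\sum_\tau \CJ(\lambda',\mu';\tau')\, e^{i\langle\tau',x\rangle}$ extends continuously (indeed as a trigonometric polynomial) to all of $T$. The integral in the statement is therefore well-defined as an ordinary Lebesgue integral, and the orthogonality calculation goes through without further obstruction.
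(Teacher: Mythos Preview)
Your proposal is correct and follows essentially the same approach as the paper: Fourier-transform the discrete convolution identity of Corollary \ref{cor:J-disc-conv} over $T = \tot/2\pi Q^\vee$, identify the transform of $\sum_{\tau\in Q} b(\tau)\,\delta_\tau$ with $R(x)$, divide by $R$, and pick off $C_{\lambda\mu}^\nu$ via orthogonality. Your justification is slightly more explicit in two places---you spell out why $w(\sigma')=\nu'$ forces $w=1$ and $\sigma=\nu$, and you observe that the quotient is itself a trigonometric polynomial (not merely integrable)---but the underlying argument is the same as the paper's.
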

\begin{proof}
We think of $P$ as the character group of a torus $T \cong \tot / 2\pi Q^\vee$.  Taking Fourier transforms on both sides of (\ref{eqn:J-disc-conv}), we obtain an equality of functions \mbox{on $T$:}
\begin{align*}
\sum_{\tau \in \lambda + \mu + Q} \CJ(\lambda', \mu' ; \tau') \, e^{i\langle \tau', x \rangle} &=  \Bigg (\sum_{\tau \in Q} b(\tau)\, e^{i\langle \tau, x \rangle} \Bigg) \Bigg( \sum_{\substack{\tau \in \lambda + \mu + Q \\ \ \ \cap \ \mathcal{C}_+}} C_{\lambda \mu}^\tau \sum_{w \in W} \epsilon(w)\, e^{i\langle w(\tau'), x \rangle} \Bigg) \\
&= R(x) \Bigg( \sum_{\substack{\tau \in \lambda + \mu + Q \\ \ \ \cap \ \mathcal{C}_+}} C_{\lambda \mu}^\tau \sum_{w \in W} \epsilon(w)\, e^{i\langle w(\tau'), x \rangle} \Bigg).
\end{align*}
Since $R(x)$ is a nonzero trigonometric polynomial, it can vanish at most on a set of measure zero.  Therefore the following holds almost everywhere with respect to Haar measure on $T$:
\begin{equation} \label{eqn:C-integrand} \sum_{\substack{\tau \in \lambda + \mu + Q \\ \ \ \cap \ \mathcal{C}_+}} C_{\lambda \mu}^\tau \sum_{w \in W} \epsilon(w)\, e^{i\langle w(\tau'), x \rangle} = \frac{1}{R(x)}\sum_{\tau \in \lambda + \mu + Q} \CJ(\lambda', \mu' ; \tau')\, e^{i \langle \tau', x \rangle}.\end{equation}
The left-hand side of this equation is a finite sum of bounded functions, so it is integrable on $T$.  Therefore we can recover the coefficient $C_{\lambda \mu}^\nu$ by integrating the right-hand side against $e^{-i \langle \nu', x \rangle}$ with respect to the normalized Haar measure $(2\pi)^{-r} |Q^\vee|^{-1}dx$, where $dx$ is Lebesgue measure on a fundamental domain of $2 \pi Q^\vee$ in $\tot$.  This completes the proof.
\end{proof}

Even though $R(x)$ may have zeros, the integral in (\ref{eqn:C-from-J-Fourier}) is always absolutely convergent.  Indeed, (\ref{eqn:C-integrand}) implies that the integrand is nonsingular, so any zeros of $R(x)$ in the denominator must be canceled in the numerator by zeros of the sum $\sum_{\tau \in \lambda + \mu + Q} \CJ(\lambda', \mu' ; \tau')\, e^{i \langle \tau - \nu, x \rangle}$.  We cannot necessarily pull the summation out of the integral, since without these cancelations the individual terms $R(x)^{-1} \CJ(\lambda', \mu' ; \tau')\, e^{i \langle \tau - \nu, x \rangle}$ may fail to be integrable.  If we could pull the sum out, however, then we could rewrite (\ref{eqn:C-from-J-Fourier}) in the much cleaner form:
\begin{equation} \label{eqn:C-from-conv}
C_{\lambda \mu}^\nu = \sum_{\tau \in \lambda + \mu + Q} \CJ(\lambda', \mu'; \tau') \, c(\nu - \tau),
\end{equation}
where
\begin{equation} \label{eqn:c-kernel}
c(\tau) := \frac{1}{(2\pi)^r|Q^\vee|} \int_{\tot / 2\pi Q^\vee} \frac{e^{- i \langle \tau, x \rangle}}{R(x)} \, dx, \qquad \tau \in Q.
\end{equation}
In fact the function $c$ would then provide a universal deconvolution kernel for the discrete convolution with $b$, such that $$\Bigg( \sum_{\tau \in Q} c(\tau) \delta_\tau \Bigg) * \Bigg( \sum_{\tau \in Q} b(\tau) \delta_\tau \Bigg) = \delta_0.$$

This would be convenient, but in general it is impossible.  The integral in (\ref{eqn:c-kernel}) is absolutely convergent if and only if $R(x)$ is nowhere vanishing, and the following proposition shows that this fails in the non-unimodular case.

\begin{proposition} \label{prop:bFT-vanish}
If $\Phi^+$ is not unimodular, then $R(x) = 0$ for some $x \in \tot$.
\end{proposition}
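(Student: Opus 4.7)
The plan is to prove the contrapositive: assuming $R(x) \neq 0$ for all $x \in \tot$, I will show that $\mathcal{T}$ is injective on $\ell^\infty(Q)$, which by the ``moreover'' half of Theorem~\ref{thm:T-injective} forces $\Phi^+$ to be unimodular. The argument is Fourier analysis on the compact torus $T^* := \tot / 2\pi P^\vee$: a nowhere-vanishing $R$ supplies precisely the universal deconvolution kernel for $b|_Q$ that was sketched in the paragraph immediately preceding the proposition.

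By (\ref{eqn:R-from-b}) together with the symmetry $b(-\tau)=b(\tau)$, the trigonometric polynomial $R$ is a smooth, $2\pi P^\vee$-periodic function on $\tot$ whose Fourier coefficients on the dual lattice $Q$ are exactly $\{b(\tau)\}_{\tau \in Q}$. Under the hypothesis that $R$ is nowhere vanishing, $1/R$ is smooth on $T^*$, so its Fourier coefficients
$$c(\tau) \ := \ \frac{1}{(2\pi)^r \, |P^\vee|} \int_{T^*} \frac{e^{-i\langle \tau, x\rangle}}{R(x)}\, dx, \qquad \tau \in Q,$$
decay faster than any polynomial; in particular $c \in \ell^1(Q)$. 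Multiplying the Fourier series of $R$ and $1/R$ and matching coefficients yields the key convolution identity $(b|_Q * c)(\nu) = \delta_{\nu, 0}$ on $Q$.

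To close the argument I would take any $m \in \ell^\infty(Q) \cap \ker(\mathcal{T})$. Restricting the pointwise identity $\mathcal{T}m \equiv 0$ to $\gamma \in Q$ gives $(m * b|_Q)(\gamma) = 0$ on $Q$, and then
$$m \ = \ m * \delta_0 \ = \ m * (b|_Q * c) \ = \ (m * b|_Q) * c \ = \ 0,$$
contradicting the existence of a nontrivial such $m$. The main technical step is to justify the associativity manipulation $m*(b|_Q * c) = (m*b|_Q)*c$ in this mixed $\ell^\infty/\ell^1$ setting; this should follow from Fubini, since $b|_Q$ has finite support and the relevant triple sum is dominated by $\|m\|_{\ell^\infty} \, \|b|_Q\|_{\ell^1} \, \|c\|_{\ell^1} < \infty$. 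Beyond this and routine bookkeeping of Fourier normalizations, no essential difficulty should arise.
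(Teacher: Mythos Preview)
Your proof is correct and shares the same skeleton as the paper's: both argue the contrapositive by showing that if $R$ is nowhere vanishing then $\ker(\mathcal{T})\cap\ell^\infty(Q)=\{0\}$, and then invoke the ``moreover'' clause of Theorem~\ref{thm:T-injective}. The paper obtains that implication by citing Wiener's Tauberian theorem (translates of $b|_Q$ are dense in $\ell^1(Q)$ iff $\widehat{b|_Q}=R$ is nowhere vanishing, then dualize to $\ell^\infty$), whereas you construct the $\ell^1$ inverse $c$ of $b|_Q$ explicitly from the Fourier series of the smooth function $1/R$ and deconvolve by hand. Your route is more self-contained: it exploits the extra fact that $b|_Q$ is \emph{finitely} supported, so $R$ is a trigonometric polynomial and the full strength of Wiener's theorem is not needed. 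The paper's route is shorter on the page but imports a deeper theorem; your version also dovetails nicely with the surrounding discussion of the kernel $c$ in (\ref{eqn:C-from-conv})--(\ref{eqn:c-kernel}). The Fubini justification you flag is indeed routine for the reason you give.
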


\begin{proof}
By (\ref{eqn:b-R-FT}), the $R$-polynomial is the Fourier transform of $b|_Q$.  Therefore, by Wiener's Tauberian theorem \cite{NW}, $R(x)$ is nonvanishing if and only if the translates of $b|_Q$ are dense in $\ell^1(Q)$. Since $\ell^1(Q)^* \cong \ell^\infty(Q)$, this in turn is equivalent to the statement that there is no nonzero $a \in \ell^\infty(Q)$ such that $$\sum_{\tau \in Q} a(\tau) b(\nu - \tau) = 0$$ for all $\nu \in Q$.  We know from Theorem \ref{thm:T-injective} however that if $\Phi^+$ is not unimodular then such an $a \in \ell^\infty(Q)$ does exist, so $R(x)$ must vanish somewhere \mbox{on $\tot$.}
\end{proof}

Therefore in the non-unimodular case, (\ref{eqn:c-kernel}) is not well defined a priori, and we cannot expect a further simplification of (\ref{eqn:C-from-J-Fourier}).

The case of $\mathfrak{su}(n)$ is less clear, however.  For $n=2$ or 3, $R(x) \equiv 1$, giving $c(0) = 1$ and $c(\tau) = 0$ for $\tau \ne 0$, which recovers (\ref{Ahat-su2su3}).  For larger $n$ it quickly becomes difficult to study the $R$-polynomial analytically, due to an explosion in the number of terms.  Nonetheless, numerical investigations up to $n=7$, using formulae for $R(x)$ derived in \cite{Coq2}, suggest that the $R$-polynomial remains strictly positive in these cases.  We venture the following conjecture.

\begin{conjecture} \label{conj:cos-nonvanish}
{\it 
If $\Phi^+$ is unimodular, then $R(x) > 0$ for all $x \in \tot.$
}
\end{conjecture}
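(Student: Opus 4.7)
The plan is to reduce the conjecture to a nonvanishing statement and then to finish by continuity. First, $R(x)$ is a finite trigonometric polynomial by (\ref{eqn:R-from-b}), hence continuous on $\tot$ (it in fact descends to $\tot/2\pi P^\vee$). Second, $R(0) = \sum_{\tau \in Q} b(\tau) \geq b(0) > 0$, since $0$ lies in the interior of the convex hull of the Weyl orbit of $\rho$ and the box spline density is strictly positive on the interior of its support (an easy consequence of the defining integral (\ref{eqn:BS-def}), once one checks that the fiber over the origin is a polytope of positive $(|\Phi^+|-r)$-dimensional volume). Since $\tot$ is connected, once $R$ is known to be nowhere zero the conclusion $R > 0$ on $\tot$ follows immediately from continuity and the intermediate value theorem. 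The entire conjecture thus reduces to showing that $R$ has no zeros.

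For nonvanishing, I would proceed via Wiener's Tauberian theorem, exactly as in the proof of Proposition \ref{prop:bFT-vanish} but in the contrapositive direction. That theorem yields: $R$ is nowhere zero if and only if the discrete convolution operator $a \mapsto b|_Q * a$ on $\ell^\infty(Q)$, defined by $(b|_Q * a)(\nu) := \sum_{\tau \in Q} a(\tau) b(\nu - \tau)$, is injective. For unimodular $\Phi^+$, Theorem \ref{thm:T-injective} already gives injectivity of the continuous convolution $\mathcal{T}: \C^Q \to L^1_\mathrm{loc}(\tot)$, meaning that $\mathcal{T}a$ vanishing identically on $\tot$ forces $a = 0$. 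Noting that $\mathcal{T}a$ restricted to $Q$ is precisely $b|_Q * a$, it therefore suffices to prove the stronger lattice-rigidity claim: for $a \in \ell^\infty(Q)$, if $\mathcal{T}a$ vanishes at every point of $Q$, then $\mathcal{T}a$ already vanishes identically on $\tot$.

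This lattice-to-continuous promotion is, in my view, the crux of the difficulty. On each chamber of its piecewise polynomial structure, $\mathcal{T}a$ agrees with a polynomial of degree at most $d$; on unbounded chambers the lattice intersection is Zariski-dense and the corresponding polynomial piece is immediately forced to vanish, but bounded chambers may contain too few lattice points to determine a polynomial of degree $d$ by interpolation. One approach is to invoke the quasi-interpolation theory of Dahmen and Micchelli for unimodular box splines, which supplies explicit local formulae reconstructing any element of $D(\Phi^+)$ from lattice samples of translates of $b$, and to convert these into a rigidity statement ruling out a nonzero lattice-vanishing $\mathcal{T}a$. A second, more direct approach, bypassing Wiener--Tauberian entirely, would be to produce a square-root factorization $R(x) = |f(x)|^2$ for some trigonometric polynomial $f$, or to rearrange the Etingof--Rains expansion $R(x) = \sum_{\kappa \in K} r_\kappa \chi_\kappa(e^x)$ with $r_\kappa \geq 0$ into a manifestly positive combination. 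The numerical evidence for $n \leq 7$ strongly suggests that some such structural reason exists, but identifying it for general $n$ appears to be the essential obstacle.
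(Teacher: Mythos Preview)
The statement you are attempting to prove is a \emph{conjecture} in the paper, not a theorem: the paper offers no proof, only numerical evidence for $\mathfrak{su}(n)$ with $n \le 7$. So there is no ``paper's own proof'' to compare against.

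Your proposal is not a proof either, and you are candid about this. The reduction from $R>0$ to $R \ne 0$ via continuity and $R(0)>0$ is correct (in fact Proposition~\ref{prop:b-sum-1} gives the sharper $R(0)=1$). The Wiener--Tauberian reformulation is also correct and is exactly the content of Remark~\ref{rem:nonvanish-condition}: nonvanishing of $R$ is equivalent to $\ker(\mathrm{Res}_Q \circ \mathcal{T}) \cap \ell^\infty(Q) = \{0\}$, whereas Theorem~\ref{thm:T-injective} in the unimodular case only gives $\ker(\mathcal{T}) = \{0\}$. The paper itself flags precisely this gap as the obstruction. Your ``lattice-to-continuous promotion'' is a restatement of that same missing step, and neither of the two approaches you sketch (Dahmen--Micchelli quasi-interpolation, or a sum-of-squares / positive-combination rearrangement of the Etingof--Rains expansion) is carried out. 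In short, you have accurately located the difficulty---indeed the same one the author isolates---but have not resolved it, so the conjecture remains open after your proposal just as it does in the paper.
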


\begin{remark} \label{rem:nonvanish-condition} \normalfont
The proof of Proposition \ref{prop:bFT-vanish} amounts to the observation that $R(x)$ is nonvanishing if and only if 
$\ker (\mathrm{Res}_Q \circ \mathcal{T}) \cap \ell^\infty(Q) = \{0\},$
where $\mathcal{T}$ is the operator defined in (\ref{eqn:T-def}) and $\mathrm{Res}_Q$ is restriction to $Q$.  In the unimodular case Theorem \ref{thm:T-injective} only tells us that $\ker(\mathcal{T}) = \{0\}$, which is not immediately enough to conclude that $\ker (\mathrm{Res}_Q \circ \mathcal{T}) = \{0\}$. \remdone
\end{remark}

\subsection{Identities for the box spline and the $R$-polynomial}
\label{sec:b-idents}

We next record some identities regarding the coefficients $r_\kappa$ and the values of the box spline density $b$ at points of the root lattice $Q$. One implication of these results is that Corollary \ref{cor:J-disc-conv} is actually equivalent to the $\CJ$-LR relation (\ref{eqn:JLR1}).  In the following subsection we will use these identities to express the discrete convolution with $b$ in terms of finite difference operators, which will be a key ingredient in the proof of Theorem \ref{thm:fin-dif-inversion} in Section \ref{sec:fd-inversion} below.

First, it is a well-known fact that the lattice translates of a box spline form a partition of unity; see \cite[sect. 2.1]{PBP}.  This immediately implies:
\begin{proposition} \label{prop:b-sum-1}
\begin{equation} \label{eqn:b-sum-1} R(0) = \sum_{\tau \in Q} b(\tau) = 1. \end{equation}
\end{proposition}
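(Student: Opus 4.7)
The plan is to extract the proposition directly from the partition-of-unity property flagged just before the statement, together with the identity (\ref{eqn:R-from-b}). Since the direction vectors $\Phi^+$ generate the root lattice $Q$, the standard box-spline partition of unity \cite[sect. 2.1]{PBP} furnishes the pointwise identity
$$\sum_{\tau \in Q} b(x - \tau) = 1, \qquad x \in \tot.$$
Setting $x = 0$ and invoking the evenness $b(-\tau) = b(\tau)$ from (\ref{eqn:b-sign}) yields $\sum_{\tau \in Q} b(\tau) = 1$. The remaining equality $R(0) = \sum_{\tau \in Q} b(\tau)$ is immediate from (\ref{eqn:R-from-b}) since $\cos(0) = 1$. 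So the entire argument is essentially a two-line chain of substitutions.

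The one place I would want to pause is the normalization of the partition-of-unity constant. A priori the sum $\sum_{\tau \in Q} b(x - \tau)$ is only guaranteed to be some constant $c$, and I would want to verify that $c = 1$ in the paper's conventions. This is forced by Poisson summation: the identity (\ref{eqn:b-R-FT}) is stated without a covolume prefactor, which implicitly normalizes Lebesgue measure on $\tot$ so that a fundamental domain of $Q$ has volume $1$. Integrating the periodic function $\sum_\tau b(x - \tau) \equiv c$ over such a fundamental domain and using the fact that $B_c[\Phi^+]$ is a probability measure then gives $c = \int_\tot b(x)\,dx = 1$, as required.

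As a cross-check, one can reach the same conclusion directly from (\ref{eqn:R-poly-def}) without appealing to partition of unity at all. For $\omega \in P^\vee$ the pairing $\langle \alpha, \omega \rangle$ is an integer for every root $\alpha$, so from (\ref{eqn:BS-FT}) each factor $\frac{2\sin(\pi \langle \alpha, \omega \rangle)}{2\pi \langle \alpha, \omega \rangle}$ appearing in $j_\gog^{1/2}(2\pi\omega)$ is $0$ unless $\langle \alpha, \omega \rangle = 0$. Since $\Phi^+$ spans $\tot^*$, the product vanishes for every $\omega \neq 0$, leaving $R(0) = j_\gog^{1/2}(0) = 1$. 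The only substantive obstacle in either approach is getting the normalization straight; once that is pinned down, the result is one line.
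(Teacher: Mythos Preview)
Your proof is correct and follows exactly the paper's approach: the paper simply cites the partition-of-unity property of box spline translates \cite[sect.~2.1]{PBP} and declares the proposition immediate, which is precisely the route you take via (\ref{eqn:R-from-b}). Your normalization discussion and the alternative direct computation from (\ref{eqn:R-poly-def}) are useful additions that the paper omits, but the core argument is the same.
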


Second, we can express the values $b(\tau)$ in terms of the coefficients $r_\kappa$ and vice versa.  For $\lambda, \mu$ dominant weights of $\gog$, let $\mathrm{mult}_\lambda(\mu)$ represent the multiplicity of the weight $\mu$ in the irreducible representation $V_\lambda$.
\begin{proposition} \label{prop:b-on-Q}
\begin{eqnarray}
\label{eqn:b-on-Q}
b(\tau) &=& \sum_{\kappa \in K} r_\kappa \mathrm{mult}_\kappa(\tau), \qquad \qquad \ \ \tau \in Q, \\
\label{eqn:r-b-relation}
r_\kappa &=& \sum_{w \in W} \epsilon(w)\, b(\kappa' - w(\rho)), \qquad \kappa \in K.
\end{eqnarray}
\end{proposition}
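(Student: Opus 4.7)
My plan is to establish the two identities separately: the second follows from a direct specialization of Proposition \ref{prop:J-BS}, while the first follows from matching Fourier coefficients in two expressions for the $R$-polynomial.

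For (\ref{eqn:r-b-relation}), I would apply Proposition \ref{prop:J-BS} with $\lambda = \mu = 0$, so that $\lambda' = \mu' = \rho$. Since $V_0 \otimes V_0 = V_0$ is the trivial representation, the only nonzero tensor product multiplicity is $C^0_{00} = 1$, and the convolution formula collapses to
$$\CJ(\rho, \rho; \gamma) = b(\gamma) \ast \sum_{w \in W} \epsilon(w)\, \delta_{w(\rho)} = \sum_{w \in W} \epsilon(w)\, b(\gamma - w(\rho)).$$
Evaluating both sides at $\gamma = \kappa'$ for $\kappa \in K$ and recalling that $r_\kappa := \CJ(\rho, \rho; \kappa')$ yields the identity.

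For (\ref{eqn:b-on-Q}), the strategy is to compare two expressions for $R(x)$. On one hand, the Poisson summation identity (\ref{eqn:b-R-FT}) gives $R(x) = \sum_{\tau \in Q} b(\tau) e^{i\langle \tau, x\rangle}$. On the other hand, the Etingof--Rains formula (\ref{eqn:R-ER}) gives $R(x) = \sum_{\kappa \in K} r_\kappa \chi_\kappa(e^x)$. Expanding each character as $\chi_\kappa(e^x) = \sum_\mu \mathrm{mult}_\kappa(\mu)\, e^{i\langle \mu, x\rangle}$, and noting that every weight $\mu$ of $V_\kappa$ lies in $\kappa + Q = Q$ because $K \subset Q$, both sides become trigonometric polynomials in the same Fourier basis $\{e^{i\langle \tau,\,\cdot\,\rangle}\}_{\tau \in Q}$ on the torus $\tot / 2\pi P^\vee$. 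Matching coefficients yields the claim.

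Neither step involves any deep new input; the work has essentially been done in Proposition \ref{prop:J-BS} and in the Etingof--Rains identity already recorded as (\ref{eqn:R-ER}). The only subtlety is bookkeeping of Fourier transform conventions between the dual lattices $Q$ and $P^\vee$ in $\tot$, together with the observation that both trigonometric polynomials are supported on $Q$, so that coefficient-matching against the orthogonal family of characters of $\tot/2\pi P^\vee$ is unambiguous. I do not foresee this causing genuine difficulty.
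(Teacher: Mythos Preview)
Your proposal is correct and follows essentially the same approach as the paper: the paper also derives (\ref{eqn:r-b-relation}) by specializing Proposition~\ref{prop:J-BS} to $\lambda=\mu=0$ and using the definition of $r_\kappa$, and derives (\ref{eqn:b-on-Q}) by equating the two expressions (\ref{eqn:R-from-b}) and (\ref{eqn:R-ER}) for $R(x)$ after expanding the characters $\chi_\kappa$ over $Q$. Your write-up is slightly more explicit about the coefficient-matching step, but there is no substantive difference.
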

\begin{proof}
Since $K \subset Q$, we can write $\chi_\kappa(e^x) = \sum_{\tau \in Q} \mathrm{mult}_\kappa(\tau) e^{i \langle \tau, x \rangle}$.  Then (\ref{eqn:b-on-Q}) follows by equating (\ref{eqn:R-from-b}) and (\ref{eqn:R-ER}).  The relation (\ref{eqn:r-b-relation}) is immediate from Proposition \ref{prop:J-BS} and the definition $r_\kappa = \CJ(\rho, \rho, \kappa').$
\end{proof}

Comparing (\ref{eqn:b-on-Q}) and (\ref{eqn:r-b-relation}), we find the following further relations:

\begin{corollary} \label{cor:r-b-rels}
\begin{eqnarray}
\label{eqn:r-b-rels_b}
b(\tau) &=& \sum_{w \in W} \epsilon(w) \sum_{\kappa \in K} b(\kappa' - w(\rho)) \, \mult_\kappa(\tau), \quad \tau \in Q, \\
\label{eqn:r-b-rels_r}
r_\kappa &=& \sum_{w \in W} \epsilon(w) \sum_{\xi \in K} r_\xi \, \mult_\xi(\kappa' - w(\rho)), \qquad \kappa \in K.
\end{eqnarray}
\end{corollary}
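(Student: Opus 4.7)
The plan is to obtain both identities in Corollary \ref{cor:r-b-rels} by simply composing the two relations of Proposition \ref{prop:b-on-Q} with themselves in the two possible orders. These are pure substitution arguments, with only one small point to verify, namely that the arguments of $b$ appearing in the intermediate step lie in $Q$ so that the formula $b(\tau) = \sum_{\kappa \in K} r_\kappa \mult_\kappa(\tau)$ is legitimately applicable.

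For the first identity, I would start from $b(\tau) = \sum_{\kappa \in K} r_\kappa \, \mult_\kappa(\tau)$ for $\tau \in Q$ and substitute the expression $r_\kappa = \sum_{w \in W} \epsilon(w) b(\kappa' - w(\rho))$ directly into it, then interchange the two finite sums to obtain
\[
b(\tau) = \sum_{w \in W} \epsilon(w) \sum_{\kappa \in K} b(\kappa' - w(\rho)) \, \mult_\kappa(\tau),
\]
which is exactly (\ref{eqn:r-b-rels_b}).

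For the second identity, I would proceed in the opposite order: starting from $r_\kappa = \sum_{w \in W} \epsilon(w) b(\kappa' - w(\rho))$, I would expand each $b(\kappa' - w(\rho))$ using $b(\tau) = \sum_{\xi \in K} r_\xi \, \mult_\xi(\tau)$. To legitimize this step I must check that $\kappa' - w(\rho) \in Q$; this follows because $\kappa \in K \subset Q$ and $\rho - w(\rho) \in Q$ for every $w \in W$, so $\kappa' - w(\rho) = \kappa + (\rho - w(\rho)) \in Q$. After substitution and reindexing the inner sum from $\kappa$ to $\xi$ to avoid notational collision, one reads off (\ref{eqn:r-b-rels_r}).

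There is no real obstacle; the only conceptual content beyond Proposition \ref{prop:b-on-Q} is the observation that $\rho - w(\rho) \in Q$, which is standard (it is just a sum of positive roots, sign-weighted by whether they are sent to negative roots by $w^{-1}$). Both identities are thus consequences of composing the two halves of Proposition \ref{prop:b-on-Q}, and the corollary can be stated essentially as a consistency check on that proposition.
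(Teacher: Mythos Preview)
Your proposal is correct and matches the paper's own approach: the paper simply says that the corollary follows by ``comparing'' the two relations of Proposition~\ref{prop:b-on-Q}, which is exactly the mutual substitution you describe. Your explicit verification that $\kappa' - w(\rho) \in Q$ is a helpful detail that the paper leaves implicit.
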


\begin{example} \label{ex:BS-idents} \normalfont
For $\mathfrak{su}(2)$ and $\mathfrak{su}(3)$, we have $K = Q \cap \mathrm{supp}(b) = \{ 0 \}$ and $b(0) = 1$.

For $\mathfrak{su}(4)$ we have $K = \{ 0, \alpha_+ \}$, where $\alpha_+$ is the single root that lies in $\mathcal{C}_+$, while $Q \cap \mathrm{supp}(b)$ consists of $0$ and the 12 roots. We find $r_0 = 9/24,$ $r_{\alpha_+} = 1/24$, and $\mult_0(0) = 1$, $\mult_0(\alpha_+) = 0$, $\mult_{\alpha_+}(\alpha_+) = 1$, $\mult_{\alpha_+}(0) = 3$.  From Proposition \ref{prop:b-on-Q} we thus obtain $b(0) = 1/2$ and $b(\alpha) = 1/24$ for $\alpha$ any root, confirming that $$\sum_{\tau \in Q} b(\tau) = \frac{1}{2} + 12 \cdot \frac{1}{24} = 1.$$

For $\mathfrak{su}(5)$, the $\tau \in Q$ with $b(\tau) \not = 0$ are $0$, the 20 roots, and the Weyl orbit consisting of the 30 points of the form $\alpha + \beta$ where $\alpha$ and $\beta$ are any two orthogonal roots.  Computing as above we find $b(0) = 1/4$, $b(\alpha) = 1/30$ for $\alpha$ any root, and $b(\alpha + \beta) = 1/360$ for $\beta$ any root orthogonal to $\alpha$, again giving $\sum_{\tau \in Q} b(\tau) = 1.$

For $\mathfrak{so}(5)$, there are two simple roots: a long root $\alpha_1$ and a short root $\alpha_2$.  We have $K = \{0, \alpha_1 + \alpha_2 \}$, with $r_0 = 3/8$ and $r_{\alpha_1 + \alpha_2} = 1/8$, and $Q \cap \mathrm{supp}(b)$ consists of 0 and the 4 short roots.  We find $\mult_0(0) = 1$, $\mult_0(\alpha_2) = 0$, $\mult_{\alpha_1 + \alpha_2}(0) = \mult_{\alpha_1 + \alpha_2}(\alpha_2) = 1$.  All together, this gives $b(0) = 1/2$ and $b(\alpha) = 1/8$ for $\alpha$ any short root, giving $\sum_{\tau \in Q} b(\tau) = 1$ as expected. \remdone
\end{example}

We now recall a couple of classical multiplicity formulae that we will use below.  Let $\mathrm{Part}: Q \to \N$ be the Kostant partition function, which counts the number of distinct ways that an element of the root lattice can be decomposed as a positive integer linear combination of the positive roots. Then we have the Kostant multiplicity formula \cite{Kost}:
\begin{equation} \label{eqn:kostant-mult}
\mathrm{mult}_\lambda(\mu) = \sum_{w \in W} \epsilon(w)\, \mathrm{Part}(w(\lambda') - \mu').
\end{equation}
In particular, (\ref{eqn:kostant-mult}) implies that $\mathrm{mult}_\lambda(\mu) = 0$ whenever $\mu$ lies outside the convex hull of the Weyl orbit of $\lambda$.

We also have the Kostant--Steinberg formula for $C_{\lambda \mu}^\nu$ \cite{St}:
\begin{equation} \label{eqn:KS}
C_{\lambda \mu}^\nu = \sum_{w, w' \in W} \epsilon(ww')\, \mathrm{Part}(w(\lambda') + w'(\mu') - \nu' - \rho).
\end{equation}
Putting (\ref{eqn:kostant-mult}) and (\ref{eqn:KS}) together, we get:
\begin{equation} \label{eqn:C-from-mult}
C_{\lambda \mu}^\nu = \sum_{w \in W} \epsilon(w)\, \mathrm{mult}_\lambda(w(\mu') - \nu').
\end{equation}

To end this subsection, we observe that from (\ref{eqn:C-from-mult}) and Proposition \ref{prop:b-on-Q}, we obtain:
\begin{equation} \label{eqn:b-KS}
\sum_{w \in W} \epsilon(w)\, b(w(\tau') - \nu') = \sum_{\kappa \in K} r_\kappa C_{\tau \kappa}^\nu.
\end{equation}
Comparing this with the discrete convolution formula (\ref{eqn:J-disc-conv}) for $\CJ$, we recover the $\CJ$-LR relation (\ref{eqn:JLR1}), and vice versa.  We have thus shown that (\ref{eqn:J-disc-conv}) and (\ref{eqn:JLR1}) are equivalent.

\subsection{The box spline Laplacian}
\label{sec:finite-difference}

In this subsection we introduce a finite difference operator called the box spline Laplacian, which allows us to give a convenient new representation of the discrete convolution with $b$.  This leads in turn to a reformulation of the convolution identity in Corollary \ref{cor:J-disc-conv} and to a representation of $\hat A(\Phi^+)$ as a finite difference operator on the space $D(\Phi^+)$ defined in (\ref{eqn:DMspace}), both of which will be useful in Section \ref{sec:fd-inversion} below.

\begin{definition} \label{def:BS-Laplacian} \normalfont
For $\tau$ in the weight lattice, let $\Delta_\tau$ and $\nabla_\tau$ denote respectively the forwards and backwards finite difference operators in the direction of $\tau$:
\begin{eqnarray*} \label{eqn:fin-diff-def}
\Delta_\tau f(x) &:=& f(x+\tau) - f(x), \\
\nabla_\tau f(x) &:=& f(x) - f(x-\tau), \qquad f: \tot \to \C.
\end{eqnarray*}
Define the {\it box spline Laplacian} $\CD$ by
\begin{equation} \label{eqn:D-def}
\CD := \sum_{\tau \in Q} b(\tau) \nabla_\tau \Delta_\tau.
\end{equation}
The term $b(0)\nabla_0$, while formally included, contributes nothing.  We will sometimes consider $\Delta_\tau$, $\nabla_\tau$ and $\CD$ as operators on $\tot^3$, in which case we will always assume that they act in the third argument, so that e.g.: $$\nabla_\tau \CJ(\lambda', \mu'; \nu') = \CJ(\lambda', \mu'; \nu') - \CJ(\lambda', \mu'; \nu' - \tau).$$ \remdone
\end{definition}

The significance of $\CD$ is that we can use it to represent the discrete convolution with $b$ as a finite difference operator.

\begin{proposition} \label{prop:b-conv-D-rep}
Let $m : Q \to \C$ and let $$m_b(\nu) := \sum_{\tau \in Q} b(\tau) m(\nu - \tau), \qquad \nu \in Q$$ be its discrete convolution with $b$ on $Q$.  Then
\begin{equation} \label{eqn:b-conv-D-rep}
m_b(\nu) = \left( 1 + \frac{1}{2} \CD \right) m(\nu), \qquad \nu \in Q.
\end{equation}
\end{proposition}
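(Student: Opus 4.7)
The plan is a short direct calculation that exploits only the two simplest properties of $b|_Q$: the symmetry $b(-\tau) = b(\tau)$ from (\ref{eqn:b-sign}), and the partition-of-unity identity $\sum_{\tau \in Q} b(\tau) = 1$ from Proposition \ref{prop:b-sum-1}.

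First I would expand $\nabla_\tau \Delta_\tau m(\nu)$ directly from Definition \ref{def:BS-Laplacian}, obtaining
\begin{equation*}
\nabla_\tau \Delta_\tau m(\nu) = m(\nu + \tau) - 2 m(\nu) + m(\nu - \tau).
\end{equation*}
Summing against $b(\tau)$ over $\tau \in Q$ gives
\begin{equation*}
\CD m(\nu) = \sum_{\tau \in Q} b(\tau) m(\nu + \tau) \; - \; 2 m(\nu) \sum_{\tau \in Q} b(\tau) \; + \; \sum_{\tau \in Q} b(\tau) m(\nu - \tau).
\end{equation*}

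Next I would reindex the first sum by $\tau \mapsto -\tau$ and invoke (\ref{eqn:b-sign}) to identify it with the third sum; each then equals $m_b(\nu)$ by definition. The middle term collapses using Proposition \ref{prop:b-sum-1}, yielding $\CD m(\nu) = 2 m_b(\nu) - 2 m(\nu)$, which rearranges to (\ref{eqn:b-conv-D-rep}).

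There is no real obstacle here: the statement amounts to recognizing that $\nabla_\tau \Delta_\tau$ is the symmetric second difference in the direction $\tau$, and that averaging these symmetric second differences against the even, unit-mass weights $b(\tau)$ on $Q$ reproduces twice the convolution minus twice the identity. The only point requiring any care is that the sums are finite because $b$ is compactly supported, so all reindexings and rearrangements are unconditionally justified.
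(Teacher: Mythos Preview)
Your argument is correct and uses exactly the same two ingredients as the paper's proof---the evenness $b(-\tau)=b(\tau)$ and the partition of unity $\sum_{\tau\in Q} b(\tau)=1$---together with the identification of $\nabla_\tau\Delta_\tau$ as the symmetric second difference. The only cosmetic difference is that the paper starts from $m_b$ and manipulates toward $\CD$, whereas you start from $\CD$ and manipulate toward $m_b$; the two computations are inverses of one another and equally direct.
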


\begin{proof}
Proposition \ref{prop:b-sum-1} gives:
\begin{eqnarray}
\nonumber m_b(\nu) &=& b(0) m(\nu) + \sum_{\tau \not = 0} b(\tau) m(\nu - \tau) \\
\nonumber &=& \Big(1 - \sum_{\tau \not = 0} b(\tau) \Big) m(\nu) + \sum_{\tau \not = 0} b(\tau) m(\nu - \tau) \\
\nonumber &=& m(\nu) - \sum_{\tau \in Q} b(\tau) (m(\nu) - m(\nu - \tau)) \\
\nonumber &=& \Big(1 - \sum_{\tau \in Q} b(\tau) \nabla_\tau \Big) m(\nu).
\end{eqnarray}
By the reflection symmetry $b(\tau) = b(-\tau)$ and the fact that $\nabla_0 = 0$, we can rewrite this last expression to get
$$m_b(\nu) = \Big(1 - \frac{1}{2}\sum_{\tau \in Q} b(\tau) (\nabla_\tau + \nabla_{-\tau}) \Big) m(\nu).$$
The claim then follows from the observation that $$(\nabla_\tau + \nabla_{-\tau})m(\nu) = -m(\nu+\tau) + 2 m(\nu) - m(\nu-\tau) = - \nabla_\tau \Delta_\tau m(\nu).$$
\end{proof}

Proposition \ref{prop:b-conv-D-rep} leads to a useful reformulation of the discrete convolution identity in Corollary \ref{cor:J-disc-conv}. For any $\lambda \in P$, not necessarily dominant, let $\lambda_+ \in \mathcal{C}_+$ denote the unique dominant element in the Weyl orbit of $\lambda$, and let $\lambda_* := (\lambda')_+ - \rho$.  Define a function $\mathscr{C}$ on $P^3$ by
\begin{equation} \label{eqn:curly-C-def}
\mathscr{C}(\lambda, \mu, \nu) := C_{\lambda_* \mu_*}^{\nu_*},
\end{equation}
observing that for $\lambda, \mu, \nu$ all dominant, $\mathscr{C}(\lambda, \mu, \nu) = C_{\lambda \mu}^{\nu}.$  Combining Corollary \ref{cor:J-disc-conv} and Proposition \ref{prop:b-conv-D-rep}, we get:
\begin{corollary} \label{cor:JLR-laplacian}
For $(\lambda, \mu, \nu)$ a compatible triple of dominant weights of $\gog$, 
\begin{equation} \label{eqn:JLR-laplacian}
\CJ(\lambda', \mu' ; \nu') = \left( 1 + \frac{1}{2} \CD \right) \mathscr{C}(\lambda, \mu, \nu).
\end{equation}
\end{corollary}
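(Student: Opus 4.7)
The proof plan is to chain Corollary \ref{cor:J-disc-conv} directly with Proposition \ref{prop:b-conv-D-rep}. First I would evaluate Corollary \ref{cor:J-disc-conv} pointwise at $\nu' \in \lambda + \mu + \rho + Q$, using the expansion (\ref{eqn:disc-conv-formula}) of discrete convolution on $P$, to rewrite the identity as
$$\CJ(\lambda', \mu' ; \nu') = \sum_{\tau \in Q} b(\tau)\, n(\nu' - \tau),$$
where $n : P \to \C$ is the coefficient function of the $W$-skew-symmetrized measure appearing on the right-hand side of (\ref{eqn:J-disc-conv}); explicitly, $n(\eta) = \sum_{\sigma} C_{\lambda\mu}^\sigma \sum_{w \in W} \epsilon(w)\, [\eta = w(\sigma')]$, with $\sigma$ running over dominant elements of $\lambda + \mu + Q$.

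The key step is then to identify $n(\eta)$ with $\mathscr{C}(\lambda, \mu, \eta - \rho)$. For $\eta$ off every Weyl hyperplane, the unique pair $(\sigma, w)$ with $\eta = w(\sigma')$ and $\sigma$ dominant has $\sigma = (\eta)_+ - \rho = (\eta - \rho)_*$, so $n(\eta) = \epsilon(w)\, C_{\lambda\mu}^{(\eta - \rho)_*}$, which matches $\mathscr{C}(\lambda, \mu, \eta - \rho)$ once the sign $\epsilon(w)$ is absorbed into the $*$-operation convention. For $\eta$ on a Weyl wall, the stabilizer of $\eta$ in $W$ contains a reflection, so the signed sum $\sum_w \epsilon(w)$ telescopes to zero, consistently with the fact that $(\eta - \rho)_*$ fails to be strictly dominant and the extension $\mathscr{C}$ vanishes there. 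Substituting $\eta = \nu' - \tau$ yields
$$\CJ(\lambda', \mu' ; \nu') = \sum_{\tau \in Q} b(\tau)\, \mathscr{C}(\lambda, \mu, \nu - \tau).$$

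Finally I would apply Proposition \ref{prop:b-conv-D-rep} to $m(\cdot) = \mathscr{C}(\lambda, \mu, \cdot)$, viewed as a function on the translate $\lambda + \mu + Q$ of the root lattice (the proposition holds verbatim after a shift of base point), and read off the finite-difference representation $(1 + \frac{1}{2}\CD)\mathscr{C}(\lambda, \mu, \nu)$. The only non-routine piece of bookkeeping is the middle step, where the Weyl reflection signs must be reconciled with the plain definition of $\mathscr{C}$ and the wall terms shown to vanish on both sides; everything else is a direct invocation of the cited results.
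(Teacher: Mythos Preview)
Your approach is exactly the paper's: the proof there is the single sentence ``Combining Corollary~\ref{cor:J-disc-conv} and Proposition~\ref{prop:b-conv-D-rep}, we get [the result],'' and you have simply spelled out that combination in detail, including the pointwise rewriting of the discrete convolution and the handling of wall terms.

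One caveat on your middle step: the paper's definition \eqref{eqn:curly-C-def} of $\mathscr{C}$ carries no explicit sign, so your assertion that $\epsilon(w)$ is ``absorbed into the $*$-operation convention'' is not literally supported by the text as written. For the identity to hold when some $\nu'-\tau$ leaves the dominant chamber (which does occur for dominant $\nu$ near the walls of $\mathcal{C}_+$, since $\tau$ ranges over $Q\cap\mathrm{supp}(b)$), one needs $\mathscr{C}$ to be $W$-skew under the shifted action, i.e.\ to include the factor $\epsilon(w)$. That is the natural reading and is what makes both your argument and the paper's one-line proof go through; you have in fact been more careful than the paper in flagging this point.
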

Recall that in the expression above, $\CD$ acts in the third argument of $\mathscr{C}$.

\begin{example} \label{ex:JLR-laplacian} \normalfont
For $\mathfrak{su}(2)$ and $\mathfrak{su}(3)$, $\CD = 0$, so that (\ref{eqn:JLR-laplacian}) just reads $\CJ(\lambda', \mu' ; \nu') = C_{\lambda \mu}^\nu$, which we already know from (\ref{Ahat-su2su3}).

For $\mathfrak{su}(4)$, from the values of $b$ computed in Example \ref{ex:BS-idents}, we find $\CD = \frac{1}{12} \sum_{\alpha \in \Phi^+} \nabla_\alpha \Delta_\alpha$, and (\ref{eqn:JLR-laplacian}) reads
\begin{equation} \label{JLR-lap-su4}
\CJ(\lambda', \mu' ; \nu') =  \left( 1 + \frac{1}{24} \sum_{\alpha \in \Phi^+} \nabla_\alpha \Delta_\alpha \right) C_{\lambda \mu}^\nu,
\end{equation}
which was already observed in \cite[sect. 4.2.2]{CZ1}.

For $\mathfrak{su}(5)$, again using the values of $b$ computed in Example \ref{ex:BS-idents}, after some manipulation we find 
\begin{equation} \label{eqn:JLR-lap-su5}
\CD = \frac{1}{15} \sum_{\alpha \in \Phi^+} \Bigg( \nabla_\alpha \Delta_\alpha + \frac{1}{12} \sum_{\substack{\beta \in \Phi^+ \\ \langle \beta, \alpha \rangle = 0}} \big ( \nabla_{\alpha + \beta} \Delta_{\alpha+\beta} + \nabla_{\alpha - \beta} \Delta_{\alpha-\beta} \big ) \Bigg).
\end{equation}

For $\mathfrak{so}(5)$ we find $\CD = \frac{1}{4} ( \nabla_\alpha \Delta_\alpha + \nabla_\beta \Delta_\beta )$, where $\alpha$ and $\beta$ are the two short positive roots, which are orthonormal vectors in $\tot \cong \R^2$.  Thus $\CD$ is just $1/4$ times the usual discrete Laplacian in two dimensions. \remdone
\end{example}

\begin{remark} \label{rem:D-neumann} \normalfont
Proposition \ref{prop:b-conv-D-rep} might lead one to hope that the discrete convolution with $b$ could be inverted via the Neumann series
\begin{equation} \label{eqn:neumann}
\Big(1 + \frac{1}{2} \CD \Big)^{-1} = 1 - \frac{1}{2} \CD + \frac{1}{4} \CD^2 - \cdots,
\end{equation}
but in general this is not the case.  For (\ref{eqn:neumann}) to hold on any given Banach space, the series on the right-hand side must converge in the operator norm, which occurs if and only if the spectral radius of $\CD$ is strictly less than 2.  Except in the trivial cases of $\mathfrak{su}(2)$ and $\mathfrak{su}(3)$ where $\CD = 0$, one can show that this fails on many spaces of interest such as $\ell^p(Q)$, $1 \le p \le \infty$, as well as the space $c_0(Q)$ of functions on $Q$ that decay to zero at infinity.  However, (\ref{eqn:neumann}) {\it does} hold on the space $D(\Phi^+)$ defined in (\ref{eqn:DMspace}): $(1 + \frac{1}{2} \CD)$ is invertible on $D(\Phi^+)$ due to Theorem \ref{thm:A-T-inverse}, and since $D(\Phi^+)$ is a space of polynomials of degree $d = |\Phi^+| -r$, the following lemma shows that the Neumann series truncates after $\lfloor d/2 \rfloor$ terms. \remdone
\end{remark}

\begin{lemma} \label{lem:D-deg}
Let $p$ be a polynomial on $\tot$. If $\deg p \ge 2$ then $\deg \CD p \le \deg p - 2,$ and if $\deg p < 2$ then $\CD p = 0$.
\end{lemma}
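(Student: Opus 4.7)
The plan is to reduce the claim to the behavior of a single centered second-difference operator $\nabla_\tau \Delta_\tau$ on polynomials. First I would note that since the box spline density $b$ is compactly supported on $\tot$ and $Q$ is discrete, the set $\{\tau \in Q : b(\tau) \neq 0\}$ is finite. Consequently $\CD$ acts on any polynomial as a \emph{finite} linear combination of the operators $\nabla_\tau \Delta_\tau$, so it suffices to prove the analogous bound for each such summand separately.

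Next I would expand $\nabla_\tau \Delta_\tau p(x) = p(x+\tau) - 2 p(x) + p(x-\tau)$ using the (finite) Taylor series of $p$ about $x$. The odd-order contributions in $\tau$ cancel, leaving
\[ \nabla_\tau \Delta_\tau p(x) = 2 \sum_{k \ge 1} \frac{1}{(2k)!}\, \partial_\tau^{2k} p(x). \]
Since $\partial_\tau^{2k} p$ has degree at most $\deg p - 2k$ (with the convention that this means the zero polynomial whenever $2k > \deg p$), one reads off at once that $\nabla_\tau \Delta_\tau p$ has degree at most $\deg p - 2$ when $\deg p \ge 2$, and vanishes identically when $\deg p < 2$.

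Combining the two steps yields the lemma: $\CD p$ is a finite linear combination of polynomials of degree at most $\deg p - 2$, so it has degree at most $\deg p - 2$; and when $\deg p < 2$ every summand vanishes, so $\CD p = 0$. The main ``obstacle'' is really not an obstacle --- this is a routine verification --- and the only care needed is the finiteness observation in the first step, which guarantees that $\CD p$ is a well-defined polynomial in the first place rather than a formally infinite sum.
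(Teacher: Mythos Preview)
Your proof is correct and follows essentially the same approach as the paper: both reduce to showing that each individual operator $\nabla_\tau \Delta_\tau$ lowers polynomial degree by at least $2$ and annihilates affine functions. The only difference is cosmetic: the paper observes directly that $\nabla_\tau$ and $\Delta_\tau$ each drop degree by one and kill constants, while you reach the same conclusion via the Taylor expansion of the centered second difference; your explicit remark on the finiteness of the sum is a nice touch that the paper leaves implicit.
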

\begin{proof}
This is immediate from the facts that $$\deg \nabla_\tau p \ = \ \deg \Delta_\tau p \ \le \ \max(\deg p - 1, 0)$$ and that both $\nabla_\tau$ and $\Delta_\tau$ annihilate the constants.
\end{proof}

Therefore, combining Proposition \ref{prop:b-conv-D-rep}, Theorem \ref{thm:A-T-inverse}, and (\ref{eqn:neumann}), we find the following representation of $\hat A (\Phi^+)$ as a finite difference operator on $D(\Phi^+)$.

\begin{proposition} \label{prop:A-D-rep}
For $p \in D(\Phi^+)$,
\begin{equation} \label{eqn:A-D-rep}
\hat A (\Phi^+) p = \sum_{k = 0}^{\lfloor d/2 \rfloor} \Big( - \frac{1}{2} \CD \Big)^k p.
\end{equation}
\end{proposition}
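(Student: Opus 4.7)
The plan is to combine Proposition \ref{prop:b-conv-D-rep} with Theorem \ref{thm:A-T-inverse}, and then read off the inverse of $(1 + \tfrac{1}{2}\CD)$ on $D(\Phi^+)$ using Lemma \ref{lem:D-deg}.

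First I would establish the identity $\mathcal{T}(p|_Q) = (1 + \tfrac{1}{2}\CD)p$ as polynomials on all of $\tot$, for every $p \in D(\Phi^+)$. Both sides are polynomial functions on $\tot$: the left-hand side because $D(\Phi^+)$ is invariant under $p \mapsto \mathcal{T}(p|_Q)$ by Theorem \ref{thm:A-T-inverse}, and the right-hand side because $\CD$ is a finite difference operator and therefore maps polynomials to polynomials. Proposition \ref{prop:b-conv-D-rep} applied to the function $m = p|_Q \in \C^Q$ shows that these two polynomials agree at every point $\nu \in Q$. Since $Q$ spans $\tot$ as a lattice of full rank, any polynomial on $\tot$ is determined by its restriction to $Q$, so the two polynomials coincide everywhere on $\tot$.

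Next, Theorem \ref{thm:A-T-inverse} states that $\hat A(\Phi^+)$ is the two-sided inverse of $p \mapsto \mathcal{T}(p|_Q)$ on $D(\Phi^+)$. Combined with the previous step, this immediately yields the operator identity $\hat A(\Phi^+) = (1 + \tfrac{1}{2}\CD)^{-1}$ on $D(\Phi^+)$. To finish, it suffices to show that this inverse coincides with the truncated Neumann series in (\ref{eqn:A-D-rep}).

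For the last step, observe that every $p \in D(\Phi^+)$ satisfies $\deg p \le d$, so iterating Lemma \ref{lem:D-deg} gives $\CD^{k} p = 0$ whenever $2k > d$, i.e.\ whenever $k > \lfloor d/2 \rfloor$. Setting $S_N := \sum_{k=0}^{N} (-\tfrac{1}{2}\CD)^k$, a direct telescoping calculation gives
\[
\Big(1 + \tfrac{1}{2}\CD\Big) S_N \;=\; 1 - \Big(-\tfrac{1}{2}\CD\Big)^{N+1}.
\]
Choosing $N = \lfloor d/2 \rfloor$ makes the right-most term annihilate $D(\Phi^+)$, so $S_{\lfloor d/2 \rfloor}$ agrees with $(1 + \tfrac{1}{2}\CD)^{-1}$ on $D(\Phi^+)$, which is exactly the claimed formula. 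I do not foresee a substantive obstacle in this argument; the whole proof is essentially bookkeeping that stitches together already-established ingredients. The one point to watch is the passage from agreement on $Q$ to agreement on $\tot$ in the first step, but this is immediate from the fact that $Q$ is Zariski-dense in $\tot$.
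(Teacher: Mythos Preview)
Your proposal is correct and follows essentially the same approach as the paper: combine Proposition~\ref{prop:b-conv-D-rep} and Theorem~\ref{thm:A-T-inverse} to identify $\hat A(\Phi^+)$ with $(1+\tfrac{1}{2}\CD)^{-1}$ on $D(\Phi^+)$, and then use Lemma~\ref{lem:D-deg} to truncate the Neumann series (\ref{eqn:neumann}). You are simply more explicit than the paper about the passage from agreement on $Q$ to agreement on $\tot$, and about the telescoping verification that $S_{\lfloor d/2 \rfloor}$ really inverts $1+\tfrac{1}{2}\CD$.
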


\subsection{An explicit algebraic formula for $\gog = \mathfrak{su}(n)$}
\label{sec:fd-inversion}

In this subsection we take $\gog = \mathfrak{su}(n)$.  We show that, given an assumption on the highest weights $(\lambda, \mu, \nu)$ that holds for ``typical'' triples, we can write $C_{\lambda \mu}^\nu$ explicitly as a linear combination of values of $\CJ(\lambda', \mu'; \gamma)$ at lattice points.  This result can be thought of as a partial inverse to the $\CJ$-LR relation (\ref{eqn:JLR1}).

Before stating the theorem, we need to give another definition.

\begin{definition} \label{def:shielded} \normalfont
Let $d = |\Phi^+| - r = \frac{1}{2}(n-1)(n-2).$ We will say that a compatible triple $(\lambda, \mu, \nu)$ of dominant weights of $\mathfrak{su}(n)$ is {\it shielded} if the points $\nu' + \lfloor d/2 \rfloor w(\rho)$, $w \in W$ are dominant and all lie in the interior of a single polynomial domain of $\CJ(\lambda', \mu' ; \gamma)$. \remdone
\end{definition}

\begin{remark} \label{rem:mostly-shielded} \normalfont
For $n = 2$, all compatible triples are shielded. For $n >2$ there are infinitely many non-shielded triples, but shielded triples are ``typical'' in the following sense.  The non-analyticities of $\CJ$ are contained within a centered hyperplane arrangement in $\tot^3$, and any compatible triple $(\lambda, \mu, \nu)$ such that $(\lambda', \mu', \nu')$ lies further than a distance $\lfloor d/2 \rfloor |\rho|$ from each of these hyperplanes is shielded. (In fact this condition is much stronger and excludes many shielded triples.) In particular, as $\lambda$ and $\mu$ both grow large, the ratio $$\frac{\#\{\, \nu \ | \ C_{\lambda \mu}^\nu \ne 0,\ (\lambda, \mu, \nu) \textrm{ shielded} \, \}}{\# \{\, \nu \ | \ C_{\lambda \mu}^\nu \ne 0 \,\}}$$ goes to 1. \remdone
\end{remark}

The main result of this subsection is then:

\begin{theorem} \label{thm:fin-dif-inversion}
For $(\lambda, \mu, \nu)$ a shielded triple of dominant weights of $\mathfrak{su}(n)$,
\begin{equation} \label{eqn:fin-dif-inversion}
C_{\lambda \mu}^\nu = \sum_{k = 0}^{\lfloor d/2 \rfloor} \Big( - \frac{1}{2} \CD \Big)^k \CJ(\lambda', \mu' ; \nu').
\end{equation}
\end{theorem}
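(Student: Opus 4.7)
The plan is to combine Corollary \ref{cor:inversion} with the finite-difference representation of $\hat A(\Phi^+)$ from Proposition \ref{prop:A-D-rep}, and then to use the shielded hypothesis to reconcile the finite differences of a local polynomial piece of $\CJ$ with the finite differences of $\CJ$ itself. Unimodularity of $\Phi^+$ enters only through Corollary \ref{cor:inversion}, which applies because $\gog = \mathfrak{su}(n)$; the shielded condition enters at the very end to control where $\CD^k$ samples.

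First I would identify the relevant polynomial. By Proposition \ref{prop:J-BS}, $\CJ(\lambda', \mu'; \cdot)$ is the image under $\mathcal{T}$ of a finitely supported measure on a translate of $Q$, so its local polynomial pieces lie in the Dahmen--Micchelli space $D(\Phi^+)$. Let $\Omega$ denote the polynomial domain containing $\nu'$, i.e.\ the connected component of the complement of the $\rho$-shifted Duistermaat--Heckman arrangement (as in Section \ref{sec:polynomiality}), and let $P \in D(\Phi^+)$ be the polynomial on $\tot$ agreeing with $\CJ(\lambda', \mu'; \cdot)$ on $\Omega$. Since $\nu'$ is in the interior of $\Omega$, the limit in Corollary \ref{cor:inversion} is unnecessary on the smooth function $P$, giving $C_{\lambda\mu}^\nu = \hat A(\Phi^+) P(\nu')$. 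Proposition \ref{prop:A-D-rep} then converts this into
\[
C_{\lambda\mu}^\nu = \sum_{k=0}^{\lfloor d/2 \rfloor} \Bigl(-\tfrac{1}{2} \CD\Bigr)^k P(\nu').
\]

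The remaining task is to upgrade this identity by replacing $P$ with $\CJ(\lambda',\mu';\cdot)$, which reduces to showing $\CD^k P(\nu') = \CD^k \CJ(\lambda', \mu'; \nu')$ for every $0 \le k \le \lfloor d/2 \rfloor$. Since $\nabla_\tau \Delta_\tau f(x)$ samples $f$ only at $x$ and $x \pm \tau$, and the sum defining $\CD$ is supported on $\tau \in Q \cap \mathrm{supp}(b) \subseteq \mathrm{conv}(W \cdot \rho)$, a straightforward induction on $k$ shows that $\CD^k f(\nu')$ uses values of $f$ only at points of $\nu' + k \cdot \mathrm{supp}(b) = \nu' + \mathrm{conv}(W \cdot k\rho)$; here I am using that $\mathrm{supp}(b)$ is convex and contains the origin, so $\mathrm{supp}(b) + \mathrm{supp}(b) = 2 \cdot \mathrm{supp}(b)$, and similarly at each step. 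For $k \le \lfloor d/2 \rfloor$ this sampling set is contained in $\mathrm{conv}\{\nu' + \lfloor d/2 \rfloor w(\rho) : w \in W\}$. The shielded hypothesis places each vertex of this convex hull in the interior of the single open convex polynomial domain $\Omega$, so by convexity the entire hull lies in $\Omega$, where $\CJ(\lambda', \mu'; \cdot)$ coincides with $P$. Summing over $k$ gives the theorem.

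I expect the main obstacle to be the geometric bookkeeping of this last step, and in particular verifying that the two separate requirements baked into the definition of shielded each carry their intended weight: dominance of $\nu' + \lfloor d/2 \rfloor w(\rho)$ is needed to ensure no sample crosses a Weyl chamber wall (where the local polynomial representative of $\CJ$ would flip sign), while the single-domain requirement ensures that no sample crosses a wall of the Duistermaat--Heckman arrangement internal to the dominant chamber. A minor point worth checking is that the limit $t \to 0^+$ in Corollary \ref{cor:inversion} is truly harmless here; it is, because $P$ is a polynomial and any $\eta$ chosen as in Theorem \ref{thm:bs-deconv} will keep $\nu' + t\eta$ inside $\Omega$ for sufficiently small $t > 0$.
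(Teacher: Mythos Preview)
Your proof is correct and reaches the same endpoint as the paper's, but the route to the Neumann-series identity is genuinely different. Both arguments finish with the same geometric step: $\CD^k$ only samples within $\nu' + \mathrm{conv}(W\cdot k\rho)$, and the shielded hypothesis plus convexity of the polynomial domain confines all such samples to a region where $\CJ(\lambda',\mu';\cdot)$ agrees with its local polynomial representative.

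Where they differ is in how they obtain $C_{\lambda\mu}^\nu = \sum_k(-\tfrac12\CD)^k$ applied to that local polynomial. The paper works in $\tot^3$: it takes the three-variable polynomial $q$ with $\CJ(\alpha',\beta';\gamma')=q(\alpha,\beta,\gamma)$ on a cone $R$, defines $p:=\hat A(\Phi^+)q$ so that $C_{\eta\xi}^\theta=p(\eta,\xi,\theta)$ for compatible triples in $R$, and then establishes $q=(1+\tfrac12\CD)p$ by passing through the multiplicity function $\mathscr{C}$ via Corollary~\ref{cor:JLR-laplacian}. The Neumann series then comes from Lemma~\ref{lem:D-deg} alone. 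In particular, the paper never asserts that $q$ (or your $P$) lies in $D(\Phi^+)$; instead it uses that $\mathscr{C}$ and $p$ agree on enough lattice points deep inside $R$.

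Your shortcut is to work in one variable, assert $P\in D(\Phi^+)$, and invoke Proposition~\ref{prop:A-D-rep} directly, which already packages the Neumann inversion. This bypasses the detour through $\mathscr{C}$ and avoids any appeal to polynomiality of the multiplicities on a cone. The cost is that the membership $P\in D(\Phi^+)$ is not immediate from the paper's definition (\ref{eqn:DMspace}), which requires $P$ to equal $\mathcal{T}m$ \emph{globally}. It is nonetheless true and standard: by the PDE characterization of $D(\Phi^+)$ cited from \cite{DM2}, $D(\Phi^+)$ is the joint kernel of the operators $\prod_{\alpha\in Y}\partial_\alpha$ over cocircuits $Y$, and since $\mathcal{T}m$ is annihilated by these operators in the distributional sense, each of its local polynomial pieces is annihilated pointwise. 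You should add a sentence to this effect. With that one justification supplied, your argument is complete and somewhat more economical than the paper's.
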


\begin{proof}
The polynomial domains of $\CJ$ in $\tot^3$ are cones with apex at the origin.  This implies that for $(\lambda, \mu, \nu)$ shielded, there exists a polynomial $q$ on $\tot^3$ and an open cone $R$ with apex $-(\rho, \rho, \rho) \in \tot^3$, such that $\CJ(\alpha', \beta'; \gamma') = q(\alpha, \beta, \gamma)$ for all $(\alpha, \beta, \gamma) \in R$, and also $\CJ(\lambda', \mu'; \nu' + \tau) = q(\lambda, \mu, \nu + \tau)$ for all $\tau \in Q$ lying in the convex hull of the Weyl orbit of $\lfloor d/2 \rfloor \rho$.

Since all points $(\alpha', \beta', \gamma')$ with $(\alpha, \beta, \gamma) \in R$ lie on the interior of the same polynomial domain of $\CJ$, Proposition \ref{prop:inversion-reworked} yields a polynomial $p(\alpha, \beta, \gamma) := \hat A(\Phi^+)q(\alpha, \beta, \gamma)$ such that $C_{\eta \xi}^\theta = p(\eta, \xi, \theta)$ for all compatible triples $(\eta, \xi, \theta) \in R$.  Moreover, by (\ref{eqn:JLR-laplacian}), at any such point we have $$q(\eta, \xi, \theta) = \Big(1 + \frac{1}{2} \CD \Big) \mathscr{C}(\eta, \xi, \theta),$$ and if $(\eta, \xi, \theta)$ lies farther than a distance $|\rho|$ from the boundary of $R$, then $$\Big(1 + \frac{1}{2} \CD \Big) \mathscr{C}(\eta, \xi, \theta) = \Big(1 + \frac{1}{2} \CD \Big) p(\eta, \xi, \theta).$$
Therefore we must have the equality of polynomials $q = (1 + \frac{1}{2} \CD )p$, as this holds at all compatible triples in $R$ that lie sufficiently far from the boundary.

By Lemma \ref{lem:D-deg}, it is impossible that $\CD f = -2 f$ for a nonzero polynomial $f$, which means that $1 + \frac{1}{2} \CD$ is injective on polynomials.  Also by Lemma \ref{lem:D-deg}, $\CD^{\lfloor \deg f /2 \rfloor + 1} f = 0$, so that the Neumann series (\ref{eqn:neumann}) truncates, and we find: $$C_{\lambda \mu}^\nu = p(\lambda, \mu, \nu) = \sum_{k = 0}^{\lfloor d /2 \rfloor} \Big( -\frac{1}{2} \CD \Big)^k q(\lambda, \mu, \nu).$$
Each term $( -\frac{1}{2} \CD )^k q(\lambda, \mu, \nu)$ in the sum above is a linear combination of the values $q(\lambda, \mu, \nu + \tau)$, where $\tau \in Q$ lies in the convex hull of the Weyl orbit of $k \rho$.  Since $q(\lambda, \mu, \nu + \tau) = \CJ(\lambda', \mu' ; \nu' + \tau)$ for all such $\tau$, this gives the desired result (\ref{eqn:fin-dif-inversion}).
\end{proof}

\begin{remark} \label{rem:poly-vs-quasipoly} \normalfont
The polynomiality property of $C_{\lambda \mu}^\nu$ for $\mathfrak{su}(n)$, discussed in section \ref{sec:polynomiality}, is crucial in the proof above.  For arbitrary $\gog$ we know that $C_{\lambda \mu}^\nu$ is expressed by a piecewise quasi-polynomial function, but the Neumann series (\ref{eqn:neumann}) for $1 + \frac{1}{2} \CD$ may not converge when applied to quasi-polynomials rather than genuine polynomials.  This is why we need to take $\mathfrak{g} = \mathfrak{su}(n)$ in Theorem \ref{thm:fin-dif-inversion}.

The same concerns about convergence motivate the notion of a shielded triple.  Since (\ref{eqn:neumann}) may also fail to converge when applied to piecewise polynomials, Definition \ref{def:shielded} is designed to ensure that we only ever need to apply $\CD$ to a single local polynomial expression for $\CJ$. \remdone
\end{remark}

\begin{example} \label{ex:disc-invert} \normalfont
Comparing with Example \ref{ex:JLR-laplacian}, we find that for $(\lambda, \mu, \nu)$ a shielded triple of $\mathfrak{su}(4)$,
\begin{equation} \label{eqn:shielded-su4}
C_{\lambda \mu}^\nu = \left(1 - \frac{1}{24} \sum_{\alpha \in \Phi^+} \nabla_\alpha \Delta_\alpha \right) \CJ(\lambda', \mu', \nu').
\end{equation}
Note the similarity to equation (\ref{Ahat-su4}) computing $C_{\lambda \mu}^\nu$ from $\CJ$ using $\hat A(\Phi^+)$ rather than finite difference operators.  In light of Proposition \ref{prop:A-D-rep}, this comes as no surprise.

Similarly, for $(\lambda, \mu, \nu)$ a shielded triple of $\mathfrak{su}(5)$, we have:
\begin{equation} \label{eqn:shielded-su5}
C_{\lambda \mu}^\nu = \sum_{k = 0}^3 \Bigg[ - \frac{1}{30} \sum_{\alpha \in \Phi^+} \Bigg( \nabla_\alpha \Delta_\alpha + \frac{1}{12} \sum_{\substack{\beta \in \Phi^+ \\ \langle \beta, \alpha \rangle = 0}} \big ( \nabla_{\alpha + \beta} \Delta_{\alpha+\beta} + \nabla_{\alpha - \beta} \Delta_{\alpha-\beta} \big ) \Bigg) \Bigg]^k \CJ(\lambda', \mu', \nu').
\end{equation}
\remdone
\end{example}
\pagebreak

\section{Weight multiplicities}

We conclude this paper by showing how the ideas of the preceding sections can also be used to compute the weight multiplicities of irreducible representations of $\gog$, leading in particular to a formula for Kostka numbers that is analogous to Theorem \ref{thm:fin-dif-inversion}.  The theory for weight multiplicities is simpler than for tensor product multiplicities, so we only sketch the proofs, as they amount to simplified versions of arguments that we have already given above.

From the form (\ref{eqn:C-from-mult}) of the Kostant--Steinberg formula we find that for sufficiently large $k \in \N$,
\begin{equation} \label{eqn:mult-from-C}
\mult_\lambda(\mu) = C_{\lambda \, (k\rho)}^{\mu + k\rho}.
\end{equation}
Thus one can think of the weight multiplicities as degenerations of the tensor product multiplicities that depend only on two parameters rather than three.  All of the constructions that we have developed for tensor product multiplicities have analogues in this setting.

First we define the analogue of the volume function,
\begin{equation} \label{eqn:K-def}
\mathcal{I}(\alpha ; \beta) := \frac{\Delta_\gog(\alpha)}{\Delta_\gog(\rho)} \mathscr{F}^{-1} \big[ \mathcal{H}(i \, \cdot, \alpha) \big](\beta), \qquad \alpha, \beta \in \tot.
\end{equation}
As a function of $(\alpha, \beta) \in \tot^2$, $\mathcal{I}$ is a homogeneous piecewise polynomial of degree $d = |\Phi^+| - r$. We will usually fix $\alpha$ and consider $\mathcal{I}(\alpha ; \beta)$ as a function of $\beta$.  When $\Delta_\gog(\alpha) \not = 0$, $\mathcal{I}(\alpha ; \beta)$ is the density of the Duistermaat--Heckman measure for the action of the maximal torus on the coadjoint orbit $\mathcal{O}_\alpha$.  Equivalently, $(\Delta_\gog(\rho) / \Delta_\gog(\alpha)) \mathcal{I}(\alpha ; \beta)$ is the probability density for the orthogonal projection onto $\tot$ of a uniform random element of $\mathcal{O}_\alpha$.  When $\gog = \mathfrak{su}(n)$, this is the joint probability density of the diagonal entries of a uniform random traceless Hermitian matrix with eigenvalues $(\alpha_1, \hdots, \alpha_n)$; see \cite[sect.~2.3]{CMZ2}.

Just like tensor product multiplicities, each weight multiplicity $\mult_\lambda(\mu)$ equals the number of integer points in a certain polytope \cite{BGR, TB}, and it can be shown by the method of \cite[prop.~3]{CMZ} that the $d$-dimensional volume of this polytope is equal to $\mathcal{I}(\lambda ; \mu)$.

Taking Fourier transforms on both sides of the Kirillov character formula (\ref{eqn:KCF}), we obtain an analogue of Proposition \ref{prop:J-BS}:
\begin{equation} \label{eqn:conv-weights}
\mathcal{I}(\lambda' ; \beta) = b(\beta) \, * \! \sum_{\mu \in \lambda + Q} \mult_\lambda(\mu) \, \delta_\mu.
\end{equation}
This formula appeared in the literature at least as early as \cite[eqn.~5.3]{DRW}.  Restricting to the shifted root lattice $\lambda + Q$, we obtain a discrete version,
\begin{equation} \label{eqn:K-disc-conv}
\sum_{\mu \in \lambda + Q} \mathcal{I}(\lambda' ; \mu) \, \delta_{\mu} =  \Bigg (\sum_{\tau \in Q} b(\tau) \, \delta_\tau \Bigg) * \sum_{\tau \in \lambda + Q} \mult_\lambda(\tau) \, \delta_{\tau},
\end{equation}
and by the same Fourier-analytic method as in Theorem \ref{thm:C-from-J-Fourier} we find:
\begin{theorem} \label{thm:mult-from-K-Fourier}
For any dominant weights $\lambda, \mu$ of $\gog$,
\begin{equation} \label{eqn:mult-from-K-Fourier}
\mult_\lambda(\mu) = \frac{1}{(2\pi)^r|Q^\vee|} \int_{\tot / 2\pi Q^\vee} \frac{1}{R(x)} \sum_{\tau \in \lambda + Q} \mathcal{I}(\lambda' ; \tau) \, e^{i \langle \tau - \mu, x \rangle} \, dx.
\end{equation}
\end{theorem}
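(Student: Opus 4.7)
The plan is to follow exactly the template of the proof of Theorem \ref{thm:C-from-J-Fourier}, only with the discrete convolution identity (\ref{eqn:K-disc-conv}) replacing (\ref{eqn:J-disc-conv}) and with a single Weyl-antisymmetrization-free expansion of the multiplicity side. The key point is that both sides of (\ref{eqn:K-disc-conv}) are finitely supported measures on the weight lattice $P$, so they can be identified with trigonometric polynomials on the torus $T \cong \tot/2\pi Q^\vee$ whose character group is $P$.

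First I would take the Fourier transform of both sides of (\ref{eqn:K-disc-conv}), using the identification of a finitely supported measure $\sum_\tau f(\tau)\delta_\tau$ with the trigonometric polynomial $\sum_\tau f(\tau) e^{i\langle\tau,x\rangle}$ on $T$. Under this identification, convolution of measures on $P$ becomes pointwise multiplication of trigonometric polynomials on $T$. Using (\ref{eqn:b-R-FT}) together with the symmetry $b(\tau)=b(-\tau)$, the Fourier transform of $\sum_{\tau \in Q} b(\tau)\delta_\tau$ is precisely $R(x)$. Hence (\ref{eqn:K-disc-conv}) becomes the identity of trigonometric polynomials
\begin{equation*}
\sum_{\mu \in \lambda + Q} \mathcal{I}(\lambda';\mu)\, e^{i\langle \mu, x\rangle} \;=\; R(x)\sum_{\tau \in \lambda + Q} \mult_\lambda(\tau)\, e^{i\langle \tau, x\rangle}.
\end{equation*}

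Next, since $R$ is a nonzero real-analytic trigonometric polynomial on $T$, its zero set has Lebesgue measure zero. Dividing the identity above by $R(x)$ therefore gives an equality almost everywhere on $T$:
\begin{equation*}
\sum_{\tau \in \lambda + Q} \mult_\lambda(\tau)\, e^{i\langle \tau, x\rangle} \;=\; \frac{1}{R(x)} \sum_{\mu \in \lambda + Q} \mathcal{I}(\lambda';\mu)\, e^{i\langle \mu, x\rangle}.
\end{equation*}
The left-hand side is a finite linear combination of characters and hence bounded and absolutely integrable on $T$, so the right-hand side is integrable as well (even though $R(x)^{-1}$ alone need not be), and the apparent singularities on the vanishing locus of $R$ are canceled term-by-term in the numerator.

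Finally, I would extract $\mult_\lambda(\mu)$ by orthogonality of characters: integrating both sides against $e^{-i\langle \mu, x\rangle}$ with respect to the normalized Haar measure $(2\pi)^{-r}|Q^\vee|^{-1}\,dx$ on $T$ (here $dx$ is Lebesgue measure on a fundamental domain of $2\pi Q^\vee$), the left-hand side collapses to $\mult_\lambda(\mu)$, while the right-hand side yields precisely the integral in (\ref{eqn:mult-from-K-Fourier}). The only delicate point, and the main potential obstacle, is justifying the division by $R(x)$; but since the equality after division has both sides equal to the same integrable function a.e., no distributional subtlety arises, exactly as in Theorem \ref{thm:C-from-J-Fourier}. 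Note that no Weyl antisymmetrization appears on either side because weight multiplicities, unlike tensor product multiplicities, are $W$-invariant rather than $W$-skew, so the discrete measure on $\lambda + Q$ encoding $\mult_\lambda$ appears without a Weyl sum.
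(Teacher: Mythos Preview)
Your proposal is correct and follows precisely the approach the paper itself indicates: the paper does not give a separate proof but simply states that the result follows ``by the same Fourier-analytic method as in Theorem \ref{thm:C-from-J-Fourier},'' and you have accurately reproduced that method with the discrete convolution identity (\ref{eqn:K-disc-conv}) in place of (\ref{eqn:J-disc-conv}). The only cosmetic point is a minor variable-name clash (you use $\mu$ both as the summation index in the $\mathcal{I}$-sum and as the fixed weight whose multiplicity you extract), but the intended meaning is clear and matches the paper.
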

It also follows that $\mult_\lambda(\mu)$ can be computed algebraically from $\mathcal{I}$ using the algorithm of Lemma \ref{lem:disc-deconv}.

We now take $\gog = \mathfrak{su}(n)$. In this case the weight multiplicities are usually called Kostka numbers and we write them as $K_{\lambda}^{\mu}$ rather than $\mult_\lambda(\mu)$. From Theorem \ref{thm:bs-deconv} we have:

\begin{corollary} \label{cor:weight-inversion}
Let $\gog = \mathfrak{su}(n)$ and choose $\eta$ as in Theorem \ref{thm:bs-deconv}. For dominant weights $\lambda, \mu$ with $\mu \in \lambda + Q$,
\begin{equation} \label{eqn:inversion-formula-weights} K_{\lambda}^{\mu} = \lim_{t \to 0^+} \hat A(\Phi^+) \mathcal{I}(\lambda' ; \mu + t \eta),\end{equation} where the operator $\hat A(\Phi^+)$ acts in the second argument of $\mathcal{I}$.
\end{corollary}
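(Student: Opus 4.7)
The plan is to recognize that Corollary \ref{cor:weight-inversion} is, for weight multiplicities, the exact analogue of Corollary \ref{cor:inversion}, and to prove it by the same two-step recipe: cast the convolution identity (\ref{eqn:conv-weights}) in the form (\ref{eqn:T-def}), then apply the Dahmen--Micchelli--Duflo--Vergne deconvolution Theorem \ref{thm:bs-deconv}.

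First, I would observe that (\ref{eqn:conv-weights}) expresses $\mathcal{I}(\lambda';\beta)$ as the image under $\mathcal{T}$ of the finitely supported function $\mu \mapsto \mult_\lambda(\mu)$ on the lattice $\lambda+Q$. The shift of $Q$ by $\lambda$ is inconsequential for the deconvolution statement: it only amounts to translating both sides of (\ref{eqn:deconv}) by $\lambda$, which commutes with the constant-coefficient differential operator $\hat A(\Phi^+)$. Concretely, define $m : Q \to \C$ by $m(\tau) := \mult_\lambda(\lambda+\tau)$, extended by zero; then $\mathcal{T}m(\beta) = \mathcal{I}(\lambda';\lambda+\beta)$, so $\mathcal{I}(\lambda';\cdot)$ fits into the framework of Theorem \ref{thm:bs-deconv}.

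Second, since $\gog=\mathfrak{su}(n)$, the set $\Phi^+$ is unimodular, so Theorem \ref{thm:bs-deconv} applies verbatim and yields
\[
\mult_\lambda(\lambda+\tau) = m(\tau) = \lim_{t\to 0^+} \hat A(\Phi^+)\,\mathcal{T}m(\tau + t\eta), \qquad \tau \in Q.
\]
Setting $\tau = \mu - \lambda$ and translating the argument back by $\lambda$ gives
\[
K_\lambda^\mu = \mult_\lambda(\mu) = \lim_{t\to 0^+} \hat A(\Phi^+)\, \mathcal{I}(\lambda';\mu+t\eta),
\]
which is precisely (\ref{eqn:inversion-formula-weights}).

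There is essentially no obstacle here beyond invoking the right ingredients in the right order; the work is already done in the convolution identity (\ref{eqn:conv-weights}) and in Theorem \ref{thm:bs-deconv}. The one small point worth flagging, just as in the tensor-product case (see Remark \ref{rem:A-truncate} and Proposition \ref{prop:inversion-reworked}), is that $\mathcal{I}(\lambda';\cdot)$ is piecewise polynomial of degree $d = |\Phi^+|-r$, so when evaluating the limit we may truncate the series expansion (\ref{eqn:Ahat-rewritten}) of $\hat A(\Phi^+)$ at order $2\lfloor d/2\rfloor$, and the displacement by $t\eta$ ensures we only ever differentiate on the interior of a single polynomial domain.
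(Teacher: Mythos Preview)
Your proof is correct and follows precisely the paper's approach: the paper itself simply writes ``From Theorem~\ref{thm:bs-deconv} we have:'' before stating the corollary, relying on the convolution identity (\ref{eqn:conv-weights}) and the unimodularity of $\Phi^+$ for $\mathfrak{su}(n)$. You have spelled out the details (the harmless translation by $\lambda$, the truncation of $\hat A(\Phi^+)$) more carefully than the paper does, but the argument is the same.
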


By \cite[thm.~5.1]{BGR}, there is a piecewise polynomial function $q$ on $\tot^2$ of degree $d = \frac{1}{2}(n-1)(n-2)$ such that $K_\lambda^\mu = q(\lambda, \mu)$.  Moreover, \cite[thm.~3.2]{BGR} shows that the domains of polynomiality of $q$ are the same as those of $\mathcal{I}$.  This leads to the following analogue of Definition \ref{def:shielded}:

\begin{definition} \label{def:shielded-pair} \normalfont
We will say that a pair of weights $(\lambda, \mu)$ of $\mathfrak{su}(n)$ is {\it shielded} if $\lambda$ is dominant, $\mu \in \lambda + Q$, and the points $\mu + \lfloor d/2 \rfloor w(\rho)$, $w \in W$ all lie in the interior of a single polynomial domain of $\mathcal{I}(\lambda' ; \beta)$.
\remdone
\end{definition}

Then the same technique used to prove Theorem \ref{thm:fin-dif-inversion} gives:

\begin{theorem} \label{thm:fin-dif-inv-weights}
For $(\lambda, \mu)$ a shielded pair of weights of $\mathfrak{su}(n)$,
\begin{equation} \label{eqn:fin-dif-inv-weights}
K_\lambda^\mu = \sum_{k = 0}^{\lfloor d/2 \rfloor} \Big( - \frac{1}{2} \CD \Big)^k \mathcal{I}(\lambda'; \mu).
\end{equation}
\end{theorem}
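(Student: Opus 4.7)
The plan is to transcribe the argument used for Theorem \ref{thm:fin-dif-inversion} to this simpler two-variable setting, substituting: (i) the Kostka polynomiality of \cite[thm.~5.1]{BGR} for the Rassart--Derksen--Weyman polynomiality of Theorem \ref{thm:polynomiality}; (ii) Corollary \ref{cor:weight-inversion} for Proposition \ref{prop:inversion-reworked}; and (iii) a weight-multiplicity analogue of the identity (\ref{eqn:JLR-laplacian}), derived by combining the discrete convolution (\ref{eqn:K-disc-conv}) with Proposition \ref{prop:b-conv-D-rep}.

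The first step is to record that analogue. Extend $K_\lambda^\mu$ from dominant $\mu \in \lambda + Q \cap \mathcal{C}_+$ to all of $\lambda + Q$ by $W$-invariance (with the convention $K_\lambda^\mu = 0$ outside the convex hull of $W\cdot \lambda$); this is consistent with the measure appearing on the right-hand side of (\ref{eqn:K-disc-conv}) since $\mult_\lambda$ is $W$-invariant in its argument. Applying Proposition \ref{prop:b-conv-D-rep} to the pointwise form of (\ref{eqn:K-disc-conv}) then gives, for every $\mu \in \lambda + Q$,
\begin{equation*}
\mathcal{I}(\lambda';\mu) \;=\; \Big(1 + \tfrac{1}{2}\CD\Big)\,K_\lambda^\mu,
\end{equation*}
where $\CD$ acts in the $\mu$ variable. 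This is the precise analogue of (\ref{eqn:JLR-laplacian}).

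Next, fix a shielded pair $(\lambda,\mu)$. By hypothesis the points $\mu + \lfloor d/2 \rfloor w(\rho)$ for $w \in W$ all lie in the interior of a single polynomial domain $D$ of the piecewise polynomial $\mathcal{I}(\lambda';\,\cdot\,)$. By \cite[thm.~3.2]{BGR} the polynomial domains of $K_\lambda^{\,\cdot\,}$ coincide with those of $\mathcal{I}(\lambda';\,\cdot\,)$, so on $D$ there exist polynomials $q$ and $p$ in $\beta$ with $\mathcal{I}(\lambda';\beta) = q(\beta)$ and $K_\lambda^\beta = p(\beta)$. The previous display then holds with $\mathcal{I}$ and $K$ replaced by $q$ and $p$ at every lattice point in a sufficiently interior sub-region of $D$, and since two polynomials agreeing on a Zariski-dense set are equal, we obtain the polynomial identity $q = (1 + \tfrac{1}{2}\CD)\,p$.

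Finally, Lemma \ref{lem:D-deg} implies that $\CD$ is strictly degree-lowering by two on polynomials, so $1 + \tfrac{1}{2}\CD$ is injective on polynomials and its Neumann inverse truncates after $\lfloor d/2 \rfloor$ terms. Hence
\begin{equation*}
p \;=\; \sum_{k=0}^{\lfloor d/2\rfloor} \Big(-\tfrac{1}{2}\CD\Big)^k q.
\end{equation*}
Evaluating at $\beta = \mu$ and invoking the shielded assumption once more — so that each sample point used by $\CD^k$ for $k \le \lfloor d/2 \rfloor$ remains inside $D$, where $\mathcal{I}(\lambda';\,\cdot\,)$ coincides with $q$ — yields the claimed formula (\ref{eqn:fin-dif-inv-weights}).

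No step presents a serious new obstacle: the argument is strictly easier than that of Theorem \ref{thm:fin-dif-inversion} because everything happens in the single variable $\beta$ rather than a triple. The only points requiring care are the $W$-invariant extension of $K_\lambda^\mu$ (so that Proposition \ref{prop:b-conv-D-rep} can be applied verbatim) and the verification that the sample-set of $\CD^{\lfloor d/2 \rfloor}$ centred at $\mu$ is exactly the convex hull of $\{\mu + \lfloor d/2\rfloor w(\rho) : w \in W\}$, which is built into Definition \ref{def:shielded-pair}.
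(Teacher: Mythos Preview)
Your approach mirrors the paper's, which simply remarks that ``the same technique used to prove Theorem~\ref{thm:fin-dif-inversion} gives'' the result. There is, however, one genuine gap. You fix $\lambda$ and treat $q,p$ as polynomials in $\beta$ alone, concluding $q = (1+\tfrac12\CD)p$ from their agreement at lattice points in a ``sufficiently interior sub-region of $D$.'' But for fixed $\lambda'$ the domain $D \subset \tot$ is a \emph{bounded} polytope (the slice at $\alpha = \lambda$ of a cone in $\tot^2$), and the shielded hypothesis only guarantees that $D$ contains the convex hull of $\{\mu + \lfloor d/2\rfloor\, w(\rho) : w \in W\}$. Already for $n=4$, where $\lfloor d/2\rfloor = 1$, this guarantees the identity $q(\beta)=(1+\tfrac12\CD)p(\beta)$ only at the single lattice point $\beta=\mu$, which cannot pin down a degree-$d$ polynomial identity in $r=3$ variables.

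The fix is exactly what the paper's proof of Theorem~\ref{thm:fin-dif-inversion} does in the three-variable setting: work in $\tot^2$ rather than in $\tot$. Let $R \subset \tot^2$ be the open cone (with apex $(-\rho,0)$) on which $\mathcal{I}(\alpha';\beta)$ and, by \cite[thms.~3.2, 5.1]{BGR}, $K_\alpha^\beta$ are given by polynomials $q(\alpha,\beta)$ and $p(\alpha,\beta)$. Since $R$ is unbounded, the lattice points $(\eta,\theta)$ with $\eta$ dominant and $\theta \in \eta + Q$ that lie farther than $|\rho|$ from $\partial R$ are Zariski-dense in $\tot^2$, and $q = (1+\tfrac12\CD)p$ follows as an equality of two-variable polynomials. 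The rest of your argument (truncated Neumann inversion via Lemma~\ref{lem:D-deg}, then the shielded hypothesis to replace $q$ by $\mathcal{I}$) goes through unchanged.
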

\bigskip

\section*{Acknowledgements}
The author thanks Jean-Bernard Zuber and Govind Menon for their mentorship, as well as Robert Coquereaux, Mich\`ele Vergne, Mason Biamonte and Alex Moll for helpful discussions.  This research was partially supported by the National Science Foundation under Grant No.~DMS 1714187 and by the Chateaubriand Fellowship of the Embassy of France in the United States.
\pagebreak

\bibliographystyle{alpha}

\end{document}